\documentclass{article}
\usepackage{amsmath, amsthm, amsfonts, amssymb, amscd, stmaryrd,xcolor,enumerate, enumitem}

\usepackage{graphicx}
\usepackage{tikz}

\usepackage[
    backend=biber,
    style=numeric,
    maxbibnames=99
  ]{biblatex}
\renewbibmacro{in:}{%
  \ifentrytype{article}
    {}
    {\bibstring{in}%
     \printunit{\intitlepunct}}}
 \DeclareNameAlias{default}{family-given}

\theoremstyle{definition}

\theoremstyle{plain}
\newtheorem{Thm}{Theorem}
\newtheorem{Lem}[Thm]{Lemma}
\newtheorem{Cor}[Thm]{Corollary}

\newtheorem{Prop}[Thm]{Proposition}

\newtheorem{Example}[Thm]{Example}

\numberwithin{equation}{section}	
\numberwithin{figure}{Prob}
\numberwithin{table}{Prob}
\numberwithin{Thm}{section}
\newcommand{\N}{\mathbb{N}}
\newcommand{\Z}{\mathbb{Z}}

\newcommand{\C}{\mathbb{C}}

\newcommand{\normal}{\trianglelefteq}

\DeclareMathOperator*{\Aut}{Aut}			

\DeclareMathOperator*{\Inn}{Inn}

\renewcommand{\S}{\mathfrak{S}}
\newcommand{\T}{\mathfrak{T}}
\newcommand{\D}{\mathcal{D}}

\newcommand{\Span}{\text{Span}}

\title{Schur rings over cyclic groups having Almost Commutative Terwilliger algebras}
\author{Nicholas L. Bastian\footnote{Email: nicbastian16@gmail.com}, Stephen P. Humphries\footnote{Email: steve@mathematics.byu.edu}}

\date{\today}

\begin{document}

\maketitle

\begin{abstract}
Terwilliger algebras are subalgebras of a matrix algebra constructed from an association scheme. Rie Tanaka defined what it means for a Terwilliger algebra to be almost commutative and gave five equivalent conditions in the case where the association scheme is commutative. A sixth condition for a Terwilliger algebra coming from a commutative Schur ring to be almost commutative has since been discovered. In this paper we first provide a classification of orbit Schur rings that produce an almost commutative Terwilliger algebra for a finite cyclic group. In particular, we show that the subgroup of automorphisms used to form the orbit Schur ring is either trivial, or the whole automorphism group when the cyclic group has prime power order. If the cyclic group has order $2^n$, these are the only options. If the cyclic group has order $p^n$, for an odd prime $p$, then the automorphism subgroup of order $p^{n-1}$ also works. If the group has a non-prime power order then the only orbit Schur ring that produces an almost commutative Terwilliger algebra comes from the trivial subgroup of the automorphism group. We then give a condition for when a wedge product of Schur rings produces an almost commutative Terwilliger algebra. This allows us to determine exactly when a Schur ring over a cyclic group produces an almost commutative Terwilliger algebra.

% This result for orbit Schur rings is then used to classify all Schur rings over cyclic groups that result in an almost commutative Terwilliger algebra.
   \end{abstract}

\textbf{Keywords}:
Terwilliger algebra, Camina group, association scheme, automorphism group, Schur ring, cyclic group, wedge product, wreath product\\

\textbf{MSC 2020 Classification}: 05E30, 05E16  

\section{Introduction}

Terwilliger algebras were originally developed in the 1990's by Paul Terwilliger to study commutative association schemes. In particular, he looked at P-polynomial and Q-polynomial association schemes. Over the course of the three papers \cite{Terwilliger1,Terwilliger2,Terwilliger3} in which Terwilliger algebras were originally introduced, Terwilliger finds a combinatorial characterization of thin P- and Q-polynomial association schemes. Since that time Terwilliger algebras have been studied for many types of association schemes, including Schur rings \cite{Bastian2, Bastiansgp,Bastian}.

Tanaka \cite{Tanaka} defined an almost commutative Terwilliger algebra. One of Tanaka's results is that when a Terwilliger algebra is almost commutative, the corresponding association scheme can be written as a wreath product in a very particular way. 

In this paper, we characterize which Schur rings of a cyclic group result in an almost commutative Terwilliger algebra. Recall that if $G$ is a cyclic group of prime order, then any Schur ring over $G$ is an orbit Schur ring. If $G$ is cyclic of non-prime order, then any Schur ring over $G$ is either $(i)$ trivial, $(ii)$ an orbit Schur ring, $(iii)$ a direct product, or $(iv)$ a wedge product (see \cite{LeungI}).

% After studying orbit Schur rings of cyclic groups, we then find all almost commutative Terwilliger algebras for a Schur ring over a cyclic group. 

In this paper we let $\mathcal{C}_n=\langle z\rangle$ denote the cyclic group of order $n$ written multiplicatively. If $K$ is a characteristic subgroup of $G$, we write $K\text{ char }G$. We let $\mathcal{U}_n=\{z^u\colon \gcd(u,n)=1\}$ for any $n$ and let $\phi$ be the Euler-Totient function. For any group $G$ and $H\leq \Aut(G)$, we let $\S(G,H)$ be the orbit Schur ring over $G$ constructed using $H$. For $g\in G$ we let $O(g)$ denote the $H-$orbit of $g$. For $k\geq 1$ with $\gcd(k,n)=1$, we let $\varphi_k$ be the automorphism of $\mathcal{C}_n=\langle z\rangle$ defined by $\varphi_k(z)=z^k$. Given a Schur ring $\S$ over a group $G$ we let $T(G,\S)$ be the corresponding Terwilliger algebra with the identity of $G$ as the base point (any other choice of base point gives an isomorphic Terwilliger algebra). Our main result is:

\begin{Thm}\label{thm:main}
Let $\S$ be a Schur ring over $\mathcal{C}_n$. Then the Terwilliger algebra $T(\mathcal{C}_n,\S)$ is almost commutative if and only if we have one of:
\begin{enumerate}[label=(\roman*)]
    \item $\S$ is trivial;
    \item $\S$ is an orbit Schur ring $\S=\S(\mathcal{C}_n,H)$, $H\leq \Aut(\mathcal{C}_n)$ for the following possibilities:
    \begin{enumerate}
        \item $H$ is the trivial subgroup of $\Aut(\mathcal{C}_n)$. In this case, $\dim T(\mathcal{C}_n,\S)=n^2$.
        % \item $|G|$ is prime and $\S(G,H)$ is the trivial Schur ring.
        \item $n=p^k$, $p$ a prime, $k\geq1$, and $H=\Aut(\mathcal{C_n})$. If $p=2$, then $\dim T(\mathcal{C}_n,\S)=\frac{1}{2}(3k^2+3k+2)$. If $p$ is odd, then $\dim T(\mathcal{C}_n,\S)=\frac{1}{2}(3k^2+5k+2)$.
        \item $n=p^k$ for an odd prime $p$, $k>1$, with $H=\langle \varphi\rangle$ where $\varphi(z)=z^{1+p}$. In this case, $\dim T(\mathcal{C}_n,\S)=\frac{1}{2}(3k^2p^2-6k^2p+3k^2-kp^2+6kp-5k+2)$.
    \end{enumerate}
    \item $\mathcal{C}_n=\mathcal{C}_r\times \mathcal{C}_s$, $\S=\S_1\times \S_2$ where $\S_1$ and $\S_2$ are the group algebra over $\mathcal{C}_r$ and $\mathcal{C}_s$ respectively;
    \item $\S=\S_1\wedge_K \T$ for subgroups $K\leq H\leq \mathcal{C}_n$, $K\normal \mathcal{C}_n$, with $\S_1$ over $H$, $\T$ over $\mathcal{C}_n/K$, where $T(H,\S_1)$, $T(\mathcal{C}_n/K,\T)$ are almost commutative and a further condition (see Theorem \ref{thm:wedgeac}).
\end{enumerate}
\end{Thm}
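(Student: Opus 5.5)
The plan is to use the Leung–Man classification \cite{LeungI}: every Schur ring over $\mathcal{C}_n$ is trivial, an orbit Schur ring, a direct product, or a wedge product. We then decide almost commutativity of $T(\mathcal{C}_n,\S)$ class by class. The working criterion is the sixth condition for commutative Schur rings mentioned in the abstract. Almost commutativity holds precisely when every non-primary irreducible $T$-module is one-dimensional. Equivalently, $\dim T$ equals the number of triples $(C_i,C_j,C_k)$ of basic sets with $p_{ij}^k\neq 0$, and $T$ is spanned by the products $E_i^*A_jE_k^*$. Since $\mathcal{C}_n$ is abelian, every Schur ring over it is commutative, so Tanaka's equivalences apply throughout.

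\textbf{Trivial and direct product cases.} The trivial Schur ring has rank two, and its Terwilliger algebra is that of a complete graph, which is almost commutative. For a direct product, the Terwilliger algebra of $\S_1\times\S_2$ is the tensor product $T(\mathcal{C}_r,\S_1)\otimes T(\mathcal{C}_s,\S_2)$. Tensoring two non-primary modules produces modules of dimension greater than one unless both factors are thin. This should force both factors to be group algebras, which gives (iii). Conversely, the group algebra of $\mathcal{C}_r\times\mathcal{C}_s=\mathcal{C}_n$ is itself the case $H=1$.

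\textbf{Orbit case.} For $H\le\Aut(\mathcal{C}_n)$, the basic sets are the $H$-orbits. The quantity $p_{ij}^k$ is nonzero exactly when some $x\in O_i$ and $y\in O_k$ have $x^{-1}y\in O_j$. The condition to check is that, for each pair of orbits $O_i,O_k$, the matrix $E_i^*A_jE_k^*$ has rank at most one unless it is forced to lie in the primary module. In practice this means that the induced action of $H$ on pairs in $O_i\times O_k$ with a fixed difference class must be transitive. When $H=1$ every orbit is a singleton, so $T$ is the full matrix algebra, of dimension $n^2$. For general $H$, we reduce to prime powers using the CRT decomposition $\Aut(\mathcal{C}_n)=\prod\Aut(\mathcal{C}_{p^{a}})$. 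If $n$ is not a prime power and $H\neq1$, we pick a nontrivial orbit in one component and a trivial one in another. This should exhibit a two-dimensional non-primary module, ruling these cases out.

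For $n=p^k$, the orbits are layered by order. The condition then becomes a counting statement: for each subgroup $H\le\Aut(\mathcal{C}_{p^k})$ and each triple of orbit classes, the stabiliser in $H$ of $x$ must act transitively on the admissible $y$. We verify this for $H=\Aut$ and, when $p$ is odd, for $H=\langle\varphi_{1+p}\rangle$, the unique subgroup of order $p^{k-1}$. We then show that every other $H$ fails, using the cyclic structure of $\Aut$ for odd $p$ and $\langle -1\rangle\times\langle 5\rangle$ for $p=2$. The dimensions come from counting the triples $(O_i,O_j,O_k)$ with $p_{ij}^k\neq0$. We sum over layers $z^{p^a}$, and the cases $a=b$ and $a\neq b$ contribute differently. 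This yields the stated quadratics in $k$.

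\textbf{Wedge case.} This part is reduced to Theorem \ref{thm:wedgeac}, proved later in the paper. That theorem gives necessary and sufficient conditions for $T(\mathcal{C}_n,\S_1\wedge_K\T)$ to be almost commutative in terms of the two factors.

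\textbf{Main obstacle.} The hardest step is the exclusion of all other subgroups $H$ in the prime power case, especially $p=2$ and the intermediate subgroups for odd $p$. There I would first test small orbits $O(z^{p^{k-1}})$ against $O(z)$ to find an explicit rank-two block $E_i^*A_jE_k^*$.
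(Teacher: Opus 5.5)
Your overall architecture matches the paper: the Leung--Man split, deferring wedges to Theorem \ref{thm:wedgeac}, and computing dimensions by counting nonzero $p_{ij}^k$ once AC is known. However, there is a genuine gap in the criterion you use to decide almost commutativity, because neither of your reformulations is equivalent to AC.

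\textbf{Triple regularity.} The condition $\dim T=|\{(i,j,k)\colon p_{ij}^k\neq0\}|$ is only a consequence of AC (the ``moreover'' part of Tanaka's theorem), not a characterization. For the octagon $\S(\mathcal{C}_8,\langle\varphi_{-1}\rangle)$ there are exactly $34$ nonzero $p_{ij}^k$. Here $\C^8$ splits as the $5$-dimensional primary module (the inversion-invariant vectors) plus the irreducible $3$-dimensional module spanned by $\widehat{z^j}-\widehat{z^{-j}}$, $j=1,2,3$. So $T\cong M_5(\C)\oplus M_3(\C)$ is triply regular but not AC.

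\textbf{Stabiliser transitivity.} Your condition that the stabiliser of $x$ act transitively on the admissible $y$ is neither necessary nor sufficient. It holds in the same octagon example, since its $34$ nonzero triples match the $34$ orbits of $\langle\varphi_{-1}\rangle$ on $G\times G$. It fails for $H=\Aut(\mathcal{C}_9)$, which is AC by Theorem \ref{thm:2nauto}: for the triple $(O(z),O(z),O(z^3))$ the stabiliser of $z$ is trivial, yet both $z^3$ and $z^6$ are admissible. So your verification for $H=\Aut$ cannot go through, and your exclusion step would accept non-AC rings.

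\textbf{The correct criterion.} The rank formulation is right only in the precise form ``for $P_i\neq P_k$, every $E_i^*A_jE_k^*$ is $0$ or $E_i^*JE_k^*$''. This is exactly Theorem \ref{thm:sequiv}: for principal sets $P_i\neq P_j^*$, the product $P_iP_j$ is a single principal set. That is the paper's tool throughout, and it makes the positive cases elementary. For $H=\Aut$ one has $O(z^{p^m})O(z^{p^\ell})=O(z^{p^{\min(m,\ell)}})$ when $m\neq\ell$. For $\langle\varphi_{1+p}\rangle$ the orbits are cosets $wK_{i+1}$, so products of non-inverse orbits are again single orbits.

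\textbf{The exclusion steps.} Beyond the criterion, the exclusion arguments (the bulk of the proof) are only gestured at, and the tests you propose are insufficient.
\begin{itemize}
\item \emph{Proper subgroups of $\langle\varphi_{1+p}\rangle$.} Testing $O(z)$ against $O(z^{p^{k-1}})$ can never detect such an $H$. Such an $H$ fixes $z^{p^{k-1}}$, so $O(z)O(z^{p^{k-1}})=O(z^{1+p^{k-1}})$ automatically. For $\langle\varphi_{10}\rangle\le\Aut(\mathcal{C}_{27})$ the failure is instead $O(z)O(z^2)=z^3\langle z^9\rangle$, a union of three fixed points. The paper finds such failures systematically: Lemma \ref{lem:Horder} shows $|H|$ is a $p$-power; Lemma \ref{lem:fixed} shows that under AC any non-identity fixed point lies in $\langle z^{p^x}\rangle$ with $x\geq k/2$; and an induction runs through the index-$p$ characteristic subgroup via Lemma \ref{lem:charac}.
\item \emph{Non-prime-power orders.} $H$ need not split along the CRT decomposition, and it may move every non-identity element of every Sylow subgroup (e.g.\ $\langle\varphi_{-1}\rangle$ on $\mathcal{C}_{15}$). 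So there may be no ``trivial orbit in another component''. The paper instead restricts $H$ to characteristic subgroups, applies the prime-power classification there, and rules out each mixed combination (Proposition \ref{prop:paqcase}, Lemma \ref{lem:2odd}, Theorems \ref{thm:evencase} and \ref{thm:oddcase}).
\item \emph{Direct products.} The relevant observation is not tensoring two non-primary modules. It is tensoring a non-primary component of one factor with the primary module of the other (of dimension at least $2$), and then invoking Lemma \ref{lem:tan}. ``Both factors thin'' is the wrong condition: $\C[\mathcal{C}_2]$ times the trivial Schur ring on $\mathcal{C}_3$ has thin factors, but it equals $\S(\mathcal{C}_6,\langle\varphi_{-1}\rangle)$, which is not AC.
\end{itemize}
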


% \begin{Thm}\label{thm:main}
%     Let $G=\langle z\rangle$ be a finite cyclic group, $H\leq \Aut(G)$, and $\S=\S(G,H)$. Then the Terwilliger algebra $T(G,\S)$ is almost commutative if and only if we have one of:
% \end{Thm}

% \begin{Thm}\label{thm:main2}
%     Let $G$ be a finite cyclic group and $\S$ be a Schur ring over $\S$. The the corresponding Terwilliger algebra $T(G,\S)$ is almost commutative if and only if we have one of:
%     \begin{enumerate}
%         \item $\S$ is the trivial Schur ring;
%         \item $\S$ is an orbit Schur ring for some $H\leq \Aut(G)$, where $G$ and $H$ are as in Theorem \ref{thm:main};
%         \item $\S$ is a direct product Schur ring $\S=\mathfrak{T}\times \mathfrak{U}$ in which both $\mathfrak{T}$ and $\mathfrak{U}$ are group rings;
%         \item $\S$ is a wedge product Schur ring $\S=\mathfrak{T}\wedge \mathfrak{U}$ in which the Terwilliger algebras corresponding to both $\mathfrak{T}$ and $\mathfrak{U}$ are almost commutative.
%     \end{enumerate}
% \end{Thm}

This paper is organized as follows: Section 2 will give preliminarily details about almost commutative Terwilliger algebras. Section 3 will then give a classification of almost commutative Terwilliger algebras for orbit Schur rings on groups of prime power order. Section 4 will consider orbit Schur rings for cyclic groups of non-prime power order. More information on the wreath product structure of the association scheme determined by $\S$ will be given in Theorem \ref{thm:wreathautop} and Theorem \ref{thm:wreath1p}. Section 5 will discuss wedge products and conclude the proof of Theorem \ref{thm:main}. 

\section{Terwilliger Algebras}

Let $\Omega$ be a finite set. Suppose $A_0,A_1,\cdots, A_d\in M_{|\Omega|}(\C)$ are $0,1$ matrices with rows and columns indexed by the elements of $\Omega$. Let $A^t$ denote the transpose of the matrix $A$. If additionally, 
\begin{enumerate}[leftmargin=*]
    \item $A_0=I_{|\Omega|}$;
    \item for all $i=1,2,\dots, d$, we have $A_i^t=A_j$ for some $j=1,2,\dots ,d$;
    \item for all $i,j\in \{1,2,\dots, d\}$ we have $A_iA_j=\sum_{k=0}^d p_{ij}^k A_k$;
    \item none of the $A_i$ is equal to $0_{|\Omega|}$ and $\sum_{i=0}^d A_i$ is the all $1$ matrix;
\end{enumerate}
then we say that $\mathcal{A}=(\Omega,\{A_i\}_{i=0}^d)$ is an \emph{association scheme}. We call $A_0,A_1,\cdots, A_d$ the \emph{adjacency matrices} of the association scheme and the constants $p_{ij}^k$ the \emph{intersection numbers}. If $A_iA_j=A_jA_i$ for all $i,j$, we have a \emph{commutative association scheme}. 

The \emph{$1-$class association scheme} for $|\Omega|=n$ has adjacency matrices $A_0=I_n$ and $A_1=J_n-I_n$, were $J_n$ is the all $1$ matrix of size $n$. We denote this association scheme by $\mathcal{K}_n$.

Let $G$ be a finite group and $R$ be a commutative ring with $1$. For $C\subseteq G$, let $\overline{C}=\sum_{g\in C} g\in R[G]$ and $C^*=\{g^{-1}\colon g\in G\}$. Let $\mathcal{P}$ be a partition of $G$. We say $\mathfrak{S}=\Span_R\{\overline{C}|C\in \mathcal{P}\}$ is a \emph{Schur ring} (see \cite{Ma89,Wielandt49}) if:
\begin{enumerate}[leftmargin=*]
    \item $\{e\} \in \mathcal{P};$
    \item if $C \in \mathcal{P}$, then $C^* \in \mathcal{P};$
    \item for all $C, D \in \mathcal{P}$,
$\overline{C}\cdot\overline{D} = \sum_{E\in \mathcal{P}} \lambda_{CDE}\overline{E}$.
\end{enumerate}

We call the sets in $\mathcal{P}$ the \emph{principal sets} of $\S$ and the constants $\lambda_{CDE}$ the \emph{structure constants}. We let $\mathcal{D}(\S)$ denote the set of principal sets of $\S$. Schur rings give rise to association schemes in the following way. We define $|G|\times |G|$ matrices $A_i$, $0\leq i\leq d$, by
\[(A_i)_{xy}=\left\{\begin{array}{cc}
    1 & \text{ if } x^{-1}y\in P_i  \\
    0 & \text{ otherwise.} 
\end{array}\right.\]
With this construction $(G,\{A_i\}_{i=0}^d)$ is an association scheme. If the Schur ring is commutative, then the association scheme is commutative. The structure constants $\lambda_{ijk}$ coming from $\overline{P_i}\cdot \overline{P_j}=\sum_{P_k\in \mathcal{P}}\lambda_{ijk}\overline{P_k}$ are the intersection numbers of the association scheme. Partitioning $G$ by the $H-$orbits for some $H\leq \Aut(G)$ produces a Schur ring $\S(G,H)$ called an \emph{orbit Schur ring}.

The \emph{group scheme} $\mathcal{G}(G)$ is the association scheme corresponding to $\S(G,\Inn(G))$.

We call $\mathfrak{A}=\Span_\C\{A_i\colon 0\leq i\leq d\}$ the \emph{Bose-Mesner algebra} of the association scheme. We let $E_0,E_1,\dots, E_d$ be the primitive idempotents of $\mathfrak{A}$. As $\mathfrak{A}$ is closed under \emph{Hadamard product}, denoted $\circ$, we have $E_i\circ E_j\in \mathfrak{A}$. Then there are $q_{ij}^k\in \C$, called the \emph{Krein parameters}, such that
\[E_i\circ E_j=\frac{1}{|\Omega|}\sum_{k=0}^d q_{ij}^kE_k.\]

Fix $x\in \Omega$ and define diagonal matrices $E_i^*(x)$ \emph{with base point $x$} by
\[(E_i^*(x))_{yy}=(A_i)_{xy}\]
We call $\mathfrak{A}^*(x)=\Span_\C\{E_i^*(x)\colon 0\leq i\leq d\}$ the \emph{dual Bose-Mesner Algebra of $\mathcal{A}$ with respect to $x$}.  

For the association scheme $\mathcal{A}$ and $x\in \Omega$ we define the \emph{Terwilliger algebra} with base point $x\in \Omega$, denoted $T_x=T_x(\mathcal{A})$, to be the subalgebra of $M_{|\Omega|}(\C)$ generated by $\mathfrak{A}$ and $\mathfrak{A}^*(x)$. For an association scheme coming from a Schur ring, we always take $x=e$.

Let $T_0(x)=\Span_\C \{E_i^*(x)A_jE_k^*(x)\colon 0\leq i,j,k\leq d\}$.
We note that $\dim T_0(x)=|\{(i,j,k)\colon p_{ij}^k\neq 0\}|$ \cite{Bannaiarticle}. A Terwilliger algebra is \emph{triply regular} if $T_x=T_0(x)$.

If $|\Omega|>1$, then the Terwilliger algebra with base point $x\in \Omega$, is non-commutative and semi-simple for all $x\in \Omega$ \cite{Terwilliger1} and so it has a Wedderburn decomposition. Given $\mathcal{A}=(\Omega,\{A_i\}_{i=0}^d)$, the Terwilliger algebra $T_x$ always has an irreducible ideal $V$ of dimension $(d+1)^2$, called the \emph{primary component} (see \cite{WreathProduct1}).

Closely related to the Wedderburn decomposition of the Terwilliger algebra are those $T_x-$modules that are irreducible when $T_x$ acts on $\C^{|\Omega|}$ by left multiplication. Then $\C^{|\Omega|}$ decomposes into a direct sum of irreducible $T_x-$modules. Letting $\hat{x}\in \C^{|\Omega|}$ have $1$ in position $x$ and $0$ elsewhere, it is shown in \cite{Terwilliger1} that $\mathfrak{A}\hat{x}$ is an irreducible $T_x-$module of dimension $d+1$. We call this irreducible module the \emph{primary module}.

A Terwilliger algebra $T_x$ is \emph{almost commutative} (AC) if every non-primary irreducible $T_x-$module is $1-$ dimensional. There is a classification of such Terwilliger algebras when $\mathcal{A}$ is commutative:

\begin{Thm}[Tanaka, \cite{Tanaka}]\label{thm:tanaka}  Let $\mathcal{A}=(\Omega,\{A_i\}_{i=0}^d)$ be a commutative association scheme. Let $T_x$ be the Terwilliger algebra of $\mathcal{A}$ for some $x\in \Omega$. The following are equivalent:
    \begin{enumerate}[leftmargin=*]
        \item Every non-primary irreducible $T_x-$module is $1-$dimensional for some $x\in \Omega$.
        \item Every non-primary irreducible $T_x-$module is $1-$dimensional for all $x\in \Omega$.
        \item The $p_{ij}^h$ satisfy: for all distinct $h,i$ there is exactly one $j$ such that $p_{ij}^h\neq 0$ $(0\leq h,i,j\leq d)$.
        \item The $q_{ij}^h$ satisfy: for all distinct $h,i$ there is exactly one $j$ such that $q_{ij}^h\neq 0$ $(0\leq h,i,j\leq d)$.
        \item $\mathcal{A}$ is a wreath product of association schemes $\mathcal{A}_1,\mathcal{A}_2,\cdots, \mathcal{A}_n$ where each $\mathcal{A}_i$ is either a $1-$class association scheme or the group scheme of an abelian group.
    \end{enumerate} 
Moreover, a Terwilliger algebra satisfying these equivalent conditions is triply regular.
\end{Thm}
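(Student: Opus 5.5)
The plan is to go through the Wedderburn structure of $T_x$ and the \emph{primary ideal} $V$. Recall $\dim V=(d+1)^2$ and that $V$ acts faithfully on the primary module $\mathfrak{A}\hat{x}$. Since $E_h^*(x)\,\mathfrak{A}\hat{x}$ is spanned by $A_h\hat{x}$, the block $E_h^*(x)VE_i^*(x)$ is one-dimensional. It is spanned by a matrix unit $e_{hi}$ that sends $A_i\hat{x}$ to $A_h\hat{x}$. The key observation is that
\[
\sum_{j}E_h^*(x)A_jE_i^*(x)=E_h^*(x)JE_i^*(x),
\]
and this lies in $V$. Indeed, $J\in\mathfrak{A}$ acts as zero on every non-primary irreducible module $W$: $J$ has image $\C\mathbf{1}\subseteq \mathfrak{A}\hat{x}$, and $W\cap\mathfrak{A}\hat{x}=0$. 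So every element of $T_x$ that kills all non-primary modules lies in $V$.

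\textbf{(3)$\Rightarrow$(1).} Fix distinct $h,i$. Condition (3) says exactly one $j$ has $E_h^*A_jE_i^*\neq 0$. That single term then equals $E_h^*JE_i^*$, so it acts as zero on any non-primary irreducible module $W$. Hence every generator-level product maps $E_i^*W$ into itself, and each $E_i^*W$ is a submodule. On $E_i^*W$ the operators $E_i^*A_jE_i^*$ commute. To see this, expand
\[
E_i^*A_jA_kE_i^*=\sum_h E_i^*A_jE_h^*A_kE_i^*.
\]
The terms with $h\neq i$ vanish on $W$, so the product $(E_i^*A_jE_i^*)(E_i^*A_kE_i^*)$ acts on $W$ as $E_i^*A_jA_kE_i^*$, which is symmetric in $j,k$ because the scheme is commutative. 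A commuting family acting irreducibly over $\C$ forces $\dim W=1$.

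\textbf{(1)$\Rightarrow$(3).} If every non-primary module is one-dimensional, it lies in a single $E_i^*W$. So for $h\neq i$ each $E_h^*A_jE_i^*$ kills all non-primary modules, and by the observation above it lies in $V$. It therefore lies in $E_h^*VE_i^*=\C e_{hi}$. The matrices $E_h^*A_jE_i^*$ for varying $j$ are $0/1$ matrices with pairwise disjoint supports, all of them scalar multiples of $e_{hi}$, so at most one is nonzero. Their sum is $E_h^*JE_i^*\neq 0$, so exactly one is nonzero, and $p_{ij}^h\neq0$ for exactly that $j$. Since (3) does not depend on $x$, this also gives (1)$\Leftrightarrow$(2).

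\textbf{Triple regularity.} The same computation shows $T_0(x)$ is closed under multiplication modulo $V$. Every product of generators reduces to $E_i^*A_lE_i^*$-type terms plus elements of $V$. Since $V$ is spanned by the $E_h^*(A_jJA_k)E_i^*$-type elements already in $T_0$, we get $T_x=T_0(x)$.

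\textbf{The main obstacle} is bringing in (4) and (5). For (3)$\Rightarrow$(5) I will argue by induction on $d$. Order the relations so that the condition ``$p_{ij}^h\neq 0$ for a unique $j$'' forces a chain of closed subsets: take a minimal nontrivial closed subset, show the scheme is a wreath product of the restriction to it with the quotient, and check that each factor again satisfies (3). A factor in which every relation is thin gives the group scheme of an abelian group; otherwise the factor is $\mathcal{K}_m$. The converse (5)$\Rightarrow$(3) is a direct computation of the intersection numbers of a wreath product. For (4) I will compute the Krein parameters of a wreath product: the dual of a wreath product is a wreath product of duals in reverse order, and the duals of $\mathcal{K}_m$ and of abelian group schemes are of the same type. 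So (5)$\Leftrightarrow$(4) follows from the same computation that gives (5)$\Leftrightarrow$(3), applied on the dual side. The delicate step is identifying the closed subset in the inductive decomposition.
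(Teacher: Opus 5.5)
The paper only quotes this theorem from Tanaka, so your argument has to stand on its own.

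\textbf{What works.} The Terwilliger-algebra part is sound. The facts $E_h^*JE_i^*\in V$ and $\dim E_h^*VE_i^*=1$, the disjoint-support step, and the congruence $E_i^*A_jE_i^*A_kE_i^*\equiv E_i^*A_jA_kE_i^*\pmod V$ under (3) together give (1)$\Leftrightarrow$(2)$\Leftrightarrow$(3) and triple regularity. The implication (5)$\Rightarrow$(3) is a routine computation.

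\textbf{First gap: (3)$\Rightarrow$(5).} Your induction step is false as stated, because a minimal nontrivial closed subset need not be a place where the scheme splits as a wreath product. Take $\mathcal{C}_{18}=\langle z\rangle$ with principal sets the six singletons of $\langle z^3\rangle$ together with $\mathcal{C}_{18}\setminus\langle z^3\rangle$.
\begin{itemize}
\item This scheme is $\mathcal{G}(\mathcal{C}_6)\wr\mathcal{K}_3$, and it is not thin.
\item It is AC by Theorem \ref{thm:sequiv}, since every product $P_iP_j$ with $P_i\neq P_j^*$ is a single class. Hence it satisfies (3).
\item Its minimal nontrivial closed subsets are $\langle z^9\rangle$ and $\langle z^6\rangle$, and it is a wreath product over neither. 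For instance, $\{z^3\}$ is a class outside $\langle z^9\rangle$ that is not a union of $\langle z^9\rangle$-cosets.
\end{itemize}
So an abelian group factor of composite order has to be split off in one piece. One way is to cut at the thin radical when it is nontrivial, and at a minimal closed subset only when the thin radical is trivial. You would then still have to prove, from (3) alone, two things: that the scheme really is a wreath product over the chosen closed subset, and that a primitive non-thin factor satisfying (3) is some $\mathcal{K}_m$. None of this is supplied, and it is the entire content of (3)$\Rightarrow$(5).

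\textbf{Second gap: condition (4).} Computing Krein parameters of wreath products gives at best (5)$\Rightarrow$(4). The converse cannot come from ``the same computation on the dual side''. The formal dual of a commutative scheme is in general only a character table, with no points, blocks or combinatorial closed subsets for your induction to act on. Any decomposition found there would still have to be transported back to $\mathcal{A}$. As written, nothing shows that (4) implies any other condition.

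\textbf{How to repair it.} Dualize your own module argument.
\begin{itemize}
\item $T_x$ is also generated by the primitive idempotents $E_h$ and the dual adjacency matrices $A_j^*$. These are diagonal with $(A_j^*)_{yy}=|\Omega|(E_j)_{xy}$, and they span $\mathfrak{A}^*(x)$.
\item $E_hA_j^*E_i=0$ exactly when the Krein parameter on $h,i,j$ (suitably arranged) vanishes.
\item $E_h\mathfrak{A}\hat{x}=\C E_h\hat{x}$, so $E_hVE_i$ is one-dimensional.
\item $\sum_jA_j^*=|\Omega|E_0^*(x)$. Since $E_0^*(x)=\hat{x}\hat{x}^t$ has image $\C\hat{x}$, it kills every non-primary module and plays the role of $J$.
\end{itemize}
The one step that does not dualize verbatim is the disjoint-support argument, because the $E_hA_j^*E_i$ are not $0/1$ matrices. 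Replace it by the short Krein computation $\mathrm{tr}\bigl(E_hA_j^*E_i\,(E_hA_k^*E_i)^{\dagger}\bigr)=0$ for $j\neq k$, where $\dagger$ denotes conjugate transpose. This shows the nonzero $E_hA_j^*E_i$ are linearly independent. With that, (1)$\Leftrightarrow$(4) runs exactly like (1)$\Leftrightarrow$(3), using commutativity of the diagonal algebra $\mathfrak{A}^*(x)$ in place of that of $\mathfrak{A}$.
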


The focus of this paper is to determine exactly when $T(G,\S)$ is AC for a Schur ring $\S$ over a cyclic group. Then we study the resulting Terwilliger algebra. We will need:

 \begin{Thm}\label{thm:sequiv}[Theorem 2.2, \cite{Bastian2}]
    Let $G$ be a finite group. Let $\S$ be any commutative Schur ring over $G$ with principal sets $P_i$, $0\leq i\leq d$. Then $T(G,\S)$ is AC if and only if $\S$ has the following property: for all $x,y\in G$ if $x\in P_i$, $y\in P_j$, $xy\in P_h$, and $P_i\neq P_j^*$ then $P_iP_j=P_h$. 
\end{Thm}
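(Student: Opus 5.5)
The plan is to deduce this from Tanaka's condition (3) in Theorem \ref{thm:tanaka}, by translating the intersection numbers $p_{ij}^h$ of the scheme attached to $\S$ into statements about products of principal sets. Since $\S$ is commutative, the associated scheme $(G,\{A_i\})$ is commutative. Hence Theorem \ref{thm:tanaka} applies, and $T(G,\S)$ is AC if and only if, for all distinct $h,i$, there is exactly one $j$ with $p_{ij}^h\neq 0$.

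\textbf{Step 1: interpret $p_{ij}^h$ in $G$.} With $(A_i)_{xy}=1$ iff $x^{-1}y\in P_i$, take a pair $(e,g)$ in relation $h$, so that $g\in P_h$. Then
\[p_{ij}^h=\bigl|\{z\in G\colon z\in P_i,\ z^{-1}g\in P_j\}\bigr|.\]
Thus $p_{ij}^h\neq 0$ if and only if $g\in P_iP_j$ for some (equivalently every) $g\in P_h$, which holds if and only if $P_j\cap P_i^*P_h\neq\emptyset$. This agrees with the Schur ring picture: $\lambda_{ijh}\neq 0$ iff $P_h\subseteq P_iP_j$. Here I use two facts. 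First, the structure constants are nonnegative integers. Second, the support of $\overline{P_i}\,\overline{P_j}$ is exactly the set $P_iP_j$, and this set is a union of principal sets.

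\textbf{Step 2: reformulate Tanaka's condition.} Fix $h\neq i$. The set $P_i^*P_h$ is nonempty and is a union of principal sets. So at least one $j$ with $p_{ij}^h\neq 0$ always exists, and "exactly one $j$" is equivalent to saying that $P_i^*P_h$ is a single principal set. Now reindex with $P_k:=P_i^*$, using axiom 2 of a Schur ring. The condition $h\neq i$ becomes $P_h\neq P_k^*$. Tanaka's condition therefore reads: for all $k,h$ with $P_k\neq P_h^*$, the product $P_kP_h$ is a single principal set.

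\textbf{Step 3: match the stated property.} Suppose $P_iP_j$ is a single principal set whenever $P_i\neq P_j^*$. If $x\in P_i$, $y\in P_j$ and $xy\in P_h$, then that single principal set meets $P_h$, so $P_iP_j=P_h$. Conversely, suppose the stated property holds and $P_i\neq P_j^*$. Every element $xy\in P_iP_j$ forces $P_iP_j$ to equal the principal set containing $xy$, so $P_iP_j$ is a single principal set. Combining Steps 1–3 gives the equivalence.

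The only delicate point is Step 1. There one must check the indexing convention for $p_{ij}^h$ against the matrices $A_i$, and confirm that the nonvanishing pattern does not depend on the representative $g\in P_h$; both follow from the Schur ring axioms. I would also observe that commutativity is used only to invoke Theorem \ref{thm:tanaka}, so the argument otherwise carries over verbatim.
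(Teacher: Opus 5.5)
Your proof is correct. The dictionary $p_{ij}^h\neq 0 \iff P_h\subseteq P_iP_j$, the observation that $P_i^*P_h$ is a nonempty union of principal sets, and the reindexing $P_k=P_i^*$ (which turns ``$h\neq i$'' into ``$P_k\neq P_h^*$'') together convert condition (3) of Theorem \ref{thm:tanaka} into exactly the stated property. The paper gives no proof of its own here (it quotes the result from \cite{Bastian2}), but this translation via Tanaka's condition (3) is the one it uses implicitly elsewhere, e.g.\ in the counting in Proposition \ref{prop:autodim}.
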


\section{Cyclic groups of order $p^n$}\label{sec:p^n}

In this section $G=\mathcal{C}_{p^n}=\langle z\rangle$, for prime $p$ and $n\geq 1$. If $H$ is trivial, then $\S(G,H)$ is the group algebra and so $T(G,\S)$ is AC. We now show that when $H=\Aut(G)$ this is also the case.

\begin{Thm}\label{thm:2nauto}
    Let $\S=\S(G,\Aut(G))$. Then $T(G,\S)$ is AC.
\end{Thm}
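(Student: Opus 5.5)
The plan is to verify the criterion of Theorem \ref{thm:sequiv}. Since $G$ is abelian, $\S$ is commutative, so the theorem applies. First I will identify the principal sets. The $\Aut(G)$-orbits on $G=\mathcal{C}_{p^n}$ are exactly the sets of generators of the subgroups:
\[P_i=\{z^{p^i u}\colon \gcd(u,p)=1\}, \qquad 0\le i\le n,\]
so $P_n=\{e\}$ and $P_i$ consists of the elements of order $p^{n-i}$. Writing $G_i=\langle z^{p^i}\rangle$, we have $P_i=G_i\setminus G_{i+1}$. Each $P_i$ is closed under inverses, so $P_i^*=P_i$, and the hypothesis $P_i\neq P_j^*$ just means $i\neq j$.

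Next, take $x\in P_i$ and $y\in P_j$ with $i\neq j$; by symmetry assume $i<j$. I claim that $xy\in P_i$ and that $P_iP_j=P_i$ as sets, which is exactly what Theorem \ref{thm:sequiv} requires, with $h=i$.

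\begin{itemize}
\item \emph{Where $xy$ lies.} Since $y\in G_j\subseteq G_{i+1}$, we have $xy\in G_i$ because both factors lie in $G_i$. If $xy\in G_{i+1}$, then $x=(xy)y^{-1}\in G_{i+1}$, a contradiction. Hence $xy\in P_i$. Equivalently, $z^{p^iu}z^{p^jv}=z^{p^i(u+p^{j-i}v)}$, and $u+p^{j-i}v$ is prime to $p$.
\item \emph{The set equality.} The inclusion $P_iP_j\subseteq P_i$ follows from the previous step. For the reverse, fix any $y_0\in P_j$; then $w=(wy_0^{-1})y_0$ for each $w\in P_i$, and $wy_0^{-1}\in P_i$ by the same argument applied to $y_0^{-1}\in P_j$.
\end{itemize}

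Note that the condition $P_i\neq P_j^*$ excludes only the case $i=j$, where $xy$ could lie in several different $P_h$. Since $P_n=\{e\}$ is among the sets, the case $j=n$ (that is, $y=e$) is covered trivially.

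There is no real obstacle here. The only point needing care is the correct description of the orbits; the problem does not even need to be split according to whether $p=2$ or $p$ is odd, because $\Aut(G)$ acts transitively on the generators of each subgroup regardless of $p$.
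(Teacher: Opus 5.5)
Your proposal is correct and follows essentially the same route as the paper. Both identify the $\Aut(G)$-orbits as $\mathcal{U}_{p^n}^{p^m}$ (the generators of $\langle z^{p^m}\rangle$), note that each orbit is self-inverse, show that for $m>\ell$ the product $O(z^{p^m})O(z^{p^\ell})$ is the single orbit $O(z^{p^\ell})$, and then invoke Theorem \ref{thm:sequiv}. The only difference is cosmetic: you verify the two inclusions using the subgroup chain $G_i\supseteq G_{i+1}$, whereas the paper computes the exponent $p^\ell(p^{m-\ell}r+s)$ directly.
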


\begin{proof}
    We let $U=\{a\in \{1,2,\cdots, p^n-1\}\colon \gcd(a,p)=1\}$. Every element of $\Aut(G)$ is determined by where it maps $z$ and $\varphi\in \Aut(G)$ is of the form $\varphi(z)=z^t$, $t\in U$. Note that $\mathcal{U}_{p^n}=\{z^a\colon a\in U\}=O(z)$ and $O(z^a)=\mathcal{U}_{p^n}^a=\{u^{a}\colon u\in \mathcal{U}_{p^n}\}$ for $z^a\in G$. For $z^a\in G$, we write $z^a=z^{p^mb}$ for $m\geq 0$ and $b\in U$. Then $z^{p^mb}\in \mathcal{U}_{p^n}^{p^m}$, so $O(z^a)=O(z^{p^m})$. 
    
    % Now let $z^a\in O(z)$. Then there exists some $\varphi\in \Aut(G)$ such that $\varphi(z)=z^a$. Then $a\in \mathcal{U}_{p^n}$, so $O(z)=\mathcal{U}_{p^n}$.

    % We claim $O(z^a)=\mathcal{U}_{p^n}^a=\{u^{a}\colon u\in \mathcal{U}_{p^n}\}$ for all $z^a\in G$. To see this let $z^b\in O(z^a)$. Then there exists some $\varphi\in \Aut(G)$ such that $\varphi(z^a)=z^b$. Therefore, $\varphi(z)^a=z^b$. However, $\varphi(z)\in \mathcal{U}_{p^n}$. So $z^b=(\varphi(z))^a\in \mathcal{U}_{p^n}^a$. So $O(z^a)\subseteq \mathcal{U}_{p^n}^a$. Next let $z^c\in \mathcal{U}_{p^n}^a$. Then $z^c=z^{at}$ for some $t\in U$. We pick $\psi\in \Aut(G)$ such that $\psi(z)=z^t$. Then $\psi(z^a)=(\psi(z))^a=z^{at}=z^c$. Thus, $z^c\in O(z^a)$ and $O(z^a)=\mathcal{U}_{p^n}^a$.

    Note that $O(z^a)=O(z^a)^*$, since $\varphi(z)=z^{-1}$ is an automorphism. Now consider $O(z^a)=O(z^{p^m}),O(z^b)=O(z^{p^\ell})$ for $z^a,z^b\in G$ such that $O(z^a)\neq O(z^b)^*$. As $O(z^a)\neq O(z^b)$, $m\neq \ell$. We assume $m>\ell$ and claim that $O(z^{p^m})O(z^{p^\ell})=O(z^{p^m+p^\ell})$. First let $z^x\in O(z^{p^m+p^{\ell}})=\mathcal{U}_{p^n}^{p^m+p^\ell}$. Then $z^x=z^{(p^m+p^\ell)q}$ for some $q$, with $p\nmid q$, and $z^x=z^{p^mq+p^\ell q}\in \mathcal{U}_{p^n}^{p^m}\mathcal{U}_{p^n}^{p^\ell}=O(z^{p^m})O(z^{p^\ell})$. So $O(z^{p^m+p^\ell})\subseteq O(z^{p^m})O(z^{p^\ell})$. 
    
    To show the other direction we first note that $O(z^{p^m+p^\ell})=\mathcal{U}_{p^n}^{p^m+p^\ell}=\mathcal{U}_{p^n}^{p^\ell(p^{m-\ell}+1)}=\mathcal{U}_{p^n}^{p^\ell}$. 
    
    % Since $m>\ell$, $p\nmid (p^{m-\ell}+1)$. Hence, $\mathcal{U}_{p^n}^{(p^{m-\ell}+1)}=\{z^{(p^{m-\ell}+1)u}\colon u\in U\}\subseteq \mathcal{U}_{p^n}$. Notice for any $z^a,z^b\in \mathcal{U}_{p^n}$ that if $z^{(p^{m-\ell}+1)a}=z^{(p^{m-\ell}+1)b}$, then $z^a=z^b$. Therefore, $|\mathcal{U}_{p^n}^{(p^{m-\ell}+1)}|=|\mathcal{U}_{p^n}|$ and so $\mathcal{U}_{p^n}^{p^{m-\ell}+1}=\mathcal{U}_{p^n}$, so $\mathcal{U}_{p^n}^{p^\ell(p^{m-\ell}+1)}=\mathcal{U}_{p^n}^{p^\ell}$. Therefore, $O(z^{p^m+p^\ell})=\mathcal{U}_{p^n}^{p^\ell}$.

    Now let $z^w\in O(z^{p^m})O(z^{p^\ell})$. Then $z^w=z^{p^mr+p^\ell s}$ for $p\nmid rs$. Thus, $z^w=z^{p^mr+p^\ell s}=z^{p^\ell(p^{m-\ell}r+s)}\in \mathcal{U}_{p^n}^{p^\ell}=O(z^{p^m+p^\ell})$. Thus, $O(z^{p^m})O(z^{p^\ell})=O(z^{p^m+p^\ell})$. So for all $z^a,z^b\in G$ with $O(z^a)\neq O(z^b)^*$ we have $O(z^a)O(z^b)=O(z^{a+b})$. Thus, $T(G,\S)$ is AC by Theorem \ref{thm:sequiv}.
\end{proof}

We will need the following:

\begin{Thm}[Corollary 1, \cite{Deaconescu}]\label{thm:unitsum}
    Let $n\geq 2$ and let $U_n=\{i\colon 1\leq i<n\colon \gcd(i,n)=1\}\subseteq \Z/n\Z$.
    \begin{enumerate}[label=(\alph*)]
        \item If $n$ is odd, then $\{u+v\colon u,v\in U_n\}=\{0,1,\cdots, n-1\}$.
        \item If $n$ is even, then $\{u+v\colon u,v\in U_n\}=\{0, 2, 4, \cdots, n-2\}$.
    \end{enumerate}
\end{Thm}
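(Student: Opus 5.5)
The statement is a cited result (Deaconescu, Corollary 1), so I would give a short self-contained argument. The plan is to reduce to prime powers via the Chinese Remainder Theorem and then handle each prime power directly.

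\textbf{Reduction.} Write $n=\prod_i p_i^{a_i}$. By the Chinese Remainder Theorem, $\Z/n\Z\cong\prod_i \Z/p_i^{a_i}\Z$, and an element is a unit exactly when each of its components is a unit. Hence the sumset $U_n+U_n$ corresponds to the product of the componentwise sumsets $U_{p_i^{a_i}}+U_{p_i^{a_i}}$. It therefore suffices to compute $U_{q}+U_{q}$ for a prime power $q=p^a$.

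\textbf{Odd prime powers.} Let $p$ be odd and take any $x\in\Z/p^a\Z$. I want $u$ with $u\not\equiv 0$ and $x-u\not\equiv 0 \pmod p$. This excludes at most two of the $p\geq 3$ residue classes modulo $p$, so a valid residue class $u \bmod p$ exists. Lifting it to any $u$ modulo $p^a$ gives $x=u+(x-u)$ with both summands units. Thus $U_{p^a}+U_{p^a}=\Z/p^a\Z$.

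\textbf{The prime $2$.} Units modulo $2^a$ are exactly the odd residues, so every sum of two units is even. Conversely, any even $x$ can be written as $1+(x-1)$ with both terms odd, hence units. Thus the sumset is exactly the even residues.

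\textbf{Assembling.} If $n$ is odd, every factor of the product is the full ring, so $U_n+U_n=\Z/n\Z$, which is (a). If $n$ is even, the $2$-component is restricted to even residues and all other components are unrestricted. Since $2\mid n$, being even modulo $2^{a}$ is equivalent to being even modulo $n$. So the sumset consists of exactly the even residues $\{0,2,\dots,n-2\}$, which is (b).

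The degenerate case is $n=2$: the only unit is $1$, and $1+1=0$, which agrees with (b). The only step needing care is the CRT compatibility, namely that the sumset of a product of sets is the product of the sumsets, which holds because addition in $\prod_i \Z/p_i^{a_i}\Z$ is componentwise. No step here is a genuine obstacle.
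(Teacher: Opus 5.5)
Your proof is correct. The paper gives no proof of this statement; it is quoted from Deaconescu, where it appears as a corollary. Your argument (CRT reduction, then a local computation at each prime power) is therefore a genuinely self-contained substitute rather than a variant of anything in the paper.

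The two local facts you use are the right ones:
\begin{itemize}
\item At an odd prime $p\ge 3$, at most two residue classes mod $p$ (namely $0$ and $x$) are forbidden, so some $u$ with $u\not\equiv 0$ and $x-u\not\equiv 0 \pmod p$ exists, and any lift of it to $\Z/p^a\Z$ works.
\item At $p=2$, the units are exactly the odd residues, which forces every sum to be even, and conversely $x=1+(x-1)$ for even $x$.
\end{itemize}
Combined with the observation that the sumset of a product of sets is the product of the componentwise sumsets, these give (a) and (b), including the edge case $n=2$.

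Your write-up also shows something about how the result is used. The paper only ever applies it with a prime-power modulus, in the proof of Proposition~\ref{prop:autodim}, where it computes $O(z^{p^m})O(z^{p^m})$ inside $\mathcal{C}_{p^n}$. Your two prime-power steps alone already cover everything the paper needs. The CRT assembly is only required for the statement in the generality in which it is quoted.
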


\begin{Prop}\label{prop:autodim}
    Let $\S=\S(G,\Aut(G))$. If
    \begin{enumerate}[label=(\alph*)]
        \item $p=2$, then $\dim T(G,\S)=\frac{1}{2}(3n^2+3n+2)$;
        \item $p$ is odd, then $\dim T(G,\S)=\frac{1}{2}(3n^2+5n+2)$.
    \end{enumerate}
\end{Prop}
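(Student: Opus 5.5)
Since $T(G,\S)$ is AC by Theorem~\ref{thm:2nauto} and $\S$ is commutative, the final clause of Theorem~\ref{thm:tanaka} shows $T(G,\S)$ is triply regular. Hence $\dim T(G,\S)=\dim T_0(e)=|\{(i,j,h)\colon p_{ij}^h\neq 0\}|$, by the count from \cite{Bannaiarticle}. So the plan is to count the triples $(i,j,h)$ with $P_h\subseteq P_iP_j$, that is, with $\lambda_{ijh}\neq 0$.

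From the proof of Theorem~\ref{thm:2nauto}, the principal sets are $P_m=O(z^{p^m})=\mathcal{U}_{p^n}^{p^m}$ for $0\leq m\leq n$. Here $P_n=\{e\}$, so $d+1=n+1$, and every $P_m$ satisfies $P_m=P_m^*$. I split the count according to whether $i\neq j$ or $i=j$.

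\emph{Case $i\neq j$.} The proof of Theorem~\ref{thm:2nauto} shows $P_iP_j=O(z^{p^i+p^j})=P_{\min(i,j)}$. This also holds when one index equals $n$, since $P_n=\{e\}$. So each of the $(n+1)n$ ordered pairs contributes exactly one $h$, giving $n(n+1)$ triples.

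\emph{Case $i=j$.} If $i=n$, then $P_nP_n=\{e\}$, which contributes $1$. For $i<n$, write elements of $P_i$ as $z^{p^iu}$ with $u$ a unit modulo $p^{n-i}$. Then $P_iP_i=\{z^{p^i(u+v)}\}$, so it is determined by the set of sums of two units in $\Z/p^{n-i}\Z$. I will apply Theorem~\ref{thm:unitsum} with modulus $p^{n-i}\geq 2$.
\begin{itemize}
\item If $p$ is odd, every residue is such a sum. So $P_iP_i=\bigcup_{k\geq i}P_k$, contributing $n-i+1$ values of $h$.
\item If $p=2$, exactly the even residues occur. So $P_iP_i=\bigcup_{k\geq i+1}P_k$, contributing $n-i$ values of $h$. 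This includes the edge case $i=n-1$, where $1+1=0$ gives only $P_n$.
\end{itemize}

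Summing the two cases:
\begin{itemize}
\item For $p$ odd: $n(n+1)+1+\sum_{i=0}^{n-1}(n-i+1)=n^2+n+1+\tfrac{n(n+1)}{2}+n=\tfrac12(3n^2+5n+2)$.
\item For $p=2$: $n(n+1)+1+\tfrac{n(n+1)}{2}=\tfrac12(3n^2+3n+2)$.
\end{itemize}

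The only delicate step is the diagonal case $i=j$. There one must translate Theorem~\ref{thm:unitsum} for the modulus $p^{n-i}$ into exactly which orbits $P_k$ occur, and handle the parity issue and the small modulus $2$ correctly for $p=2$. The rest is bookkeeping once triple regularity reduces the dimension to a count of nonzero intersection numbers.
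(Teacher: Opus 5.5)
Your proposal is correct and follows essentially the same route as the paper. It uses triple regularity via Theorem~\ref{thm:tanaka}, exactly one nonzero $p_{ij}^h$ for each of the $n(n+1)$ off-diagonal pairs, and Theorem~\ref{thm:unitsum} to show that $P_iP_i$ is $\langle z^{p^i}\rangle$ for odd $p$ and $\langle z^{2^{i+1}}\rangle$ for $p=2$. Applying Theorem~\ref{thm:unitsum} with modulus $p^{n-i}$ instead of $p^n$ is only a small simplification, since it avoids the paper's reduction-mod-$p^n$ bookkeeping.
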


\begin{proof}
    From Theorem \ref{thm:2nauto}, $T(G,\S)$ is AC, so by Theorem \ref{thm:tanaka}, $T(G,\S)$ is triply regular and $\dim T(G,\S)=|\{(i,j,k)\colon p_{ij}^k\neq 0\}|$. We let $P_0,P_1,\cdots, P_d$ be the principal sets of $\S$. If $O(g)=P_i$, then $P_{i'}=O(g^{-1})$.

    We let $U=\{a\in \{1,2,\cdots, p^n-1\}\colon \gcd(a,p)=1\}$, $\mathcal{U}_{p^n}=\{z^a\colon a\in U\}$. The $H-$orbits are of the form $O(z^{p^m})=\mathcal{U}_{p^n}^{p^m}=\{u^{p^m}\colon u\in \mathcal{U}_{p^n}\}$ for some $m$. This gives $n+1$ orbits.

    Consider $P_iP_j$ where $j\neq i'$, $P_i=O(x)$ and $P_j=O(y)$, so $O(x)\neq O(y^{-1})$. Since $T(G,\S)$ is AC by Theorem \ref{thm:sequiv}, $P_iP_j=O(x)O(y)=O(xy)$. Hence, for all $i,j$ with $i\neq j'$, there is a unique $k$ such that $p_{ij}^k\neq 0$. We have $n+1$ choices for $i$ and $n$ choices of $j$, giving $n^2+n$ different nonzero $p_{ij}^k$.

    Now we consider $P_iP_{i'}$. If $P_i=\{e\}$, we have $P_iP_{i'}=\{e\}$, so $p_{ii'}^i\neq 0$. Now assume $P_i\neq \{e\}$. Say $P_i=O(z^{p^m})$ for $0\leq m\leq n-1$. Since the map $z\mapsto z^{-1}$ is an automorphism, we have $P_{i'}=P_i$. Thus,
    \[P_iP_{i'}=P_iP_i=O(z^{p^m})O(z^{p^m})=\{z^{ap^m+bp^m}\colon a,b\in U\}=\{z^{p^m(a+b)}\colon a,b\in U\}.\]
    We consider two cases based on if $p=2$ or not.
    
    {\bf Case 1: }$p=2$. By Theorem \ref{thm:unitsum}$(b)$, $O(z^{2^m})O(z^{2^m})=\{z^{2^ma}\colon a\in \{0,2,4,\cdots, 2^n-2\}\}$. We claim that $O(z^{2^m})O(z^{2^m})=\bigcup_{i=m+1}^n O(z^{2^i})$. First let $z^{2^ma}\in O(z^{2^m})O(z^{2^m})$. Then $a\in \{0,2,\cdots, 2^n-2\}$. If $a=0$, then $z^{2^ma}=e\in O(z^{2^n})\subseteq \bigcup_{i=m+1}^n O(z^{2^{i}})$. Now assume $a=2^rs$ for $1\leq r<n$ and odd $s\leq 2^n-1$. Note that $s\in U$ and $z^{2^ma}=z^{2^{m+r}s}$. If $m+r< n$, then $m+1\leq m+r<n$ and $z^{2^{m+r}s}\in O(z^{2^{m+r}})\subseteq \bigcup_{i=m+1}^n O(z^{2^i})$. If $m+r\geq n$, then $z^{2^{m+r}s}=e\in O(e)\subseteq \bigcup_{i=m+1}^n O(z^{2^{i}})$. Hence, $O(z^{2^m})O(z^{2^m})\subseteq \bigcup_{i=m+1}^n O(z^{2^i})$. 
    
    Next let $w\in \bigcup_{i=m+1}^n O(z^{2^i})$. Then $w\in O(z^{2^j})$ for $m+1\leq j\leq n$. Let $j=m+t$, $1\leq t\leq n-m$, so $w=z^{b2^j}=z^{b2^{m+t}}=z^{2^mb2^t}$ for $1\leq t\leq n-m$ and $b\in U$. Then $b2^t$ is even and $b2^t=c+k2^n$ for some $c\in \{0,2,\cdots, 2^n-2\}$ and $k\in \N$. Then $w=z^{2^mb2^t}=z^{2^m(c+k2^n)}=z^{2^mc}\in O(z^{2^m})O(z^{2^m})$. Therefore, $O(z^{2^m})O(z^{2^m})=\bigcup_{i=m+1}^n O(z^{2^i})$. Thus, there are $n-m$ different $H-$orbits so that $O(z^{2^i})\subseteq O(z^{2^m})O(z^{2^m})$. So for a fixed $0\leq m\leq n-1$ there are $n-m$ nonzero $p_{ii}^k$. Ranging over all $0\leq m\leq n-1$, gives $\sum_{m=0}^{n-1} (n-m)=\frac{n(n+1)}{2}$ nonzero $p_{ii}^k$. Adding on the one additional nonzero $p_{ii}^k$ when $m=n$ give us a total of $\frac{n(n+1)}{2}+1$ nonzero $p_{ii}^k$.   
    
    {\bf Case 2: }$p$ is odd. By Theorem \ref{thm:unitsum}$(a)$, $O(z^{p^m})O(z^{p^m})=\{z^{p^ma}\colon a\in \{0,1,2,\cdots, p^n-1\}\}$. As in Case 1, we have $O(z^{p^m})O(z^{p^m})=\bigcup_{i=m}^n O(z^{p^i})$. 
    
    % First let $z^{p^ma}\in O(z^{p^m})O(z^{p^m})$. Then $a\in \{0,1,\cdots, p^n-1\}$. If $a=0$, then $z^{p^ma}=e\in O(z^{p^n})\subseteq \bigcup_{i=m}^n O(z^{p^{i}})$. Now assume $a\neq 0$. We can write $a=p^rs$ for $0\leq r<n$ some $s\in U$. We have $z^{p^ma}=z^{p^{m+r}s}$. If $m+r< n$, then $m\leq m+r<n$ and $z^{p^{m+r}s}\in O(z^{p^{m+r}})\subseteq \bigcup_{i=m}^n O(z^{p^i})$. If $m+r\geq n$, then $z^{p^{m+r}s}=e$, as $z$ has order $p^n$. Also $e\in O(e)=O(z^{p^n})\subseteq \bigcup_{i=m}^n O(z^{p^{i}})$. Hence, $O(z^{p^m})O(z^{p^m})\subseteq \bigcup_{i=m}^n O(z^{p^i})$. Next let $w\in \bigcup_{i=m}^n O(z^{p^i})$. Then $w\in O(z^{p^j})$ for $m\leq j\leq n$. Note $j=m+t$ for some $0\leq t\leq n-m$. This implies $w=z^{bp^j}=z^{bp^{m+t}}=z^{p^mbp^t}$ for some $1\leq t\leq n-m$ and $b\in U$. If $bp^t>p^n-1$, then $bp^t\geq p^n$ and $z^{p^mbp^t}=e\in O(z^{p^m})O(z^{p^m})$. Now assume $bp^t\leq p^n-1$. Then $bp^t\in \{0,1,\cdots, p^n-1\}$ and we have $z^{p^mbp^t}\in \{z^{p^ma}\colon a\in \{0,1,2,\cdots, p^n-1\}\}=O(z^{p^m})O(z^{p^m})$. Either way $w\in O(z^{p^m})O(z^{p^m})$. Therefore, $O(z^{p^m})O(z^{p^m})=\bigcup_{i=m}^n O(z^{p^i})$. Thus, there are $n-m+1$ different $H-$orbits so that $O(z^{p^i})\subseteq O(z^{p^m})O(z^{p^m})$. This means for a fixed $0\leq m\leq n-1$ we have found there are $n-m+1$ nonzero $p_{ii}^k$. 
    
    \noindent Ranging over all $0\leq m\leq n-1$ we have found $\sum_{m=0}^{n-1} (n-m+1)=\frac{n(n+1)}{2}+n$ nonzero $p_{ii}^k$. Adding on the one additional nonzero $p_{ii}^k$ when $m=n$ gives us a total of $\frac{n(n+1)}{2}+n+1$ nonzero $p_{ii}^k$. So,
    \[\dim T(G,\S)=\left\{\begin{array}{cc}
        n^2+n+\frac{n(n+1)}{2}+1 & \text{if $p=2$}  \\
        \\
        n^2+n+\frac{n(n+1)}{2}+n+1 & \text{if $p\neq 2$}
    \end{array}\right.=\left\{\begin{array}{cc}
        \frac{1}{2}(3n^2+3n+2) & \text{if $p=2$}  \\
        \\
        \frac{1}{2}(3n^2+5n+2) & \text{if $p\neq 2$} 
    \end{array}\right.. \qedhere\]
\end{proof}

Let $H=\Aut(G)$ and $\S=\S(G,H)$. By Theorem \ref{thm:2nauto}, $T(G,\S)$ is almost commutative. This means we can write the association scheme corresponding to $\S$ as a wreath product. We show how to do this now. Let $\mathcal{B}(\mathcal{C}_{p^n})$ denote the association scheme corresponding to $\S$. We iteratively construct an ordering of $G$ as follows: We start with $e$. Then we add on the elements of $\mathcal{U}_{p^n}^{p^{n-1}}$, ordered in ascending order. Suppose $\Lambda_1$ is the sequence of elements we have ordered so far. Note that $\Lambda_1$ is an ordering of $\langle z^{p^{n-1}}\rangle$. We add on the elements $z^{p^{n-2}}\cdot\Lambda_1, z^{2p^{n-2}}\cdot \Lambda_1,\cdots, z^{(p-1)p^{n-2}}\cdot \Lambda_1$ in that order. Letting $\Lambda_2$ represent the ordering to this point (which consists of the elements of $\langle z^{p^{n-2}}\rangle$), we repeat this process with $z^{p^{n-3}}\cdot\Lambda_2, z^{2p^{n-3}}\cdot \Lambda_2,\cdots, z^{(p-1)p^{n-3}}\cdot \Lambda_2$. Call the order we have now $\Lambda_3$. Note that $\Lambda_3$ is an ordering of the elements of $\langle z^{p^{n-3}}\rangle$. We repeat this process so that the last elements we add are $z\cdot \Lambda_{n-1},z^2\cdot \Lambda_{n-1}\cdots, z^{(p-1)}\cdot \Lambda_{n-1}$.

\begin{Lem}\label{lem:autJ}
    Let $H=\Aut(G)$, $\S=\S(G,H)$, and let $A$ be the adjacency matrix for $O(z)$. Then $A=(J_p-I_p)\otimes J_{p^{n-1}}$. 
\end{Lem}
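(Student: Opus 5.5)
The plan is to make the ordering of $G$ explicit in terms of exponents and then read off the adjacency matrix directly from $(A)_{xy}=1$ iff $x^{-1}y\in O(z)=\mathcal{U}_{p^n}$.

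\textbf{Step 1: block structure of the ordering.} By construction, $\Lambda_{n-1}$ is an ordering of the subgroup $K=\langle z^p\rangle$, which has order $p^{n-1}$. The full ordering $\Lambda_n$ is the concatenation
\[
\Lambda_{n-1},\ z\cdot\Lambda_{n-1},\ \dots,\ z^{p-1}\cdot\Lambda_{n-1}.
\]
So the positions of $G$ split into $p$ consecutive blocks of size $p^{n-1}$, and block $i$ (for $0\le i\le p-1$) is exactly the coset $z^iK$. Write the rows and columns of $A$ in these blocks, so that $A$ is a $p\times p$ array of $p^{n-1}\times p^{n-1}$ blocks $A^{(i,j)}$.

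\textbf{Step 2: identify each block.} Let $x=z^{a}\in z^iK$ and $y=z^{b}\in z^jK$. Then $x^{-1}y=z^{b-a}$ with $b-a\equiv j-i \pmod p$.

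\begin{itemize}
\item If $i\neq j$, then $p\nmid b-a$, so $x^{-1}y\in\mathcal{U}_{p^n}=O(z)$. Hence the block $A^{(i,j)}$ has every entry equal to $1$, that is, $A^{(i,j)}=J_{p^{n-1}}$.
\item If $i=j$, then $p\mid b-a$, so $x^{-1}y\in K$ and $x^{-1}y\notin O(z)$. Hence $A^{(i,i)}=0$.
\end{itemize}

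This uses the description $O(z)=\mathcal{U}_{p^n}=\{z^a\colon p\nmid a\}$ established in the proof of Theorem \ref{thm:2nauto}.

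\textbf{Step 3: conclude.} A $p\times p$ block matrix with all off-diagonal blocks equal to $J_{p^{n-1}}$ and all diagonal blocks equal to $0$ is precisely $(J_p-I_p)\otimes J_{p^{n-1}}$, by the definition of the Kronecker product.

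\textbf{Main obstacle.} The only real check is Step 1: that the iterative construction places the coset $z^iK$ exactly in the $i$-th block of length $p^{n-1}$. This follows by a short induction showing that $\Lambda_k$ lists the elements of $\langle z^{p^{n-k}}\rangle$, each exactly once. The inductive step holds because the translates $z^{cp^{n-k-1}}\cdot\Lambda_k$ for $0\le c\le p-1$ are the distinct cosets of $\langle z^{p^{n-k}}\rangle$ in $\langle z^{p^{n-k-1}}\rangle$. The internal order within each block plays no role, since every entry of a diagonal block is $0$ and every entry of an off-diagonal block is $1$.
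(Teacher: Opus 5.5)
Your proof is correct and takes the same route as the paper. The paper only invokes the constructed ordering and points to an analogous block-matrix argument in earlier work: the ordering lists $G$ as the $p$ consecutive cosets $z^i\langle z^p\rangle$, and $x^{-1}y$ is a generator exactly when $x$ and $y$ lie in different cosets. Your induction that $\Lambda_{n-1}$ enumerates $\langle z^p\rangle$, and your remark that the order within each block is irrelevant, supply exactly the details the paper leaves to that reference.
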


\begin{proof}
    Using the ordering given above, the proof is now similar to the proof of Lemma 4.4 in \cite{Bastiansgp}.
\end{proof}

\begin{Lem}\label{lem:autI}
    Let $H=\Aut(G)$ and $\S=\S(G,H)$. Let $A_i$ be the adjacency matrix of $O(z^{p^i})$ for $i\geq 1$. Then $A_i=I_p\otimes B$, where $B$ is the adjacency matrix corresponding to the $H-$orbit of $z^{p^{i-1}}$ in $\mathcal{B}(\langle z^p\rangle)$. 
\end{Lem}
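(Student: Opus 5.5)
We work directly with the ordering $\Lambda_n$ of $G$ built above, and the plan is a block computation. Recall that $\Lambda_n$ is the concatenation $\Lambda_{n-1}, z\cdot\Lambda_{n-1},\dots,z^{p-1}\cdot\Lambda_{n-1}$, where $\Lambda_{n-1}$ lists the elements of $\langle z^p\rangle$. So the rows and columns of $A_i$ split into $p\times p$ blocks of size $p^{n-1}$. The block in position $(r,s)$, $0\le r,s\le p-1$, has rows indexed by the coset $z^r\langle z^p\rangle$ and columns by $z^s\langle z^p\rangle$. Its entries are $(A_i)_{z^rx,\,z^sy}$ for $x,y\in\Lambda_{n-1}$.

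The first step handles the off-diagonal blocks. For $r\ne s$, the quotient $(z^rx)^{-1}z^sy=z^{s-r}x^{-1}y$ lies in the coset $z^{s-r}\langle z^p\rangle$, which is not $\langle z^p\rangle$. For $i\ge 1$, however, $O(z^{p^i})=\mathcal{U}_{p^n}^{p^i}\subseteq\langle z^{p}\rangle$. Hence every off-diagonal block is zero.

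The second step handles the diagonal blocks. For $r=s$ the entry is $1$ exactly when $x^{-1}y\in O(z^{p^i})$, which is independent of $r$. So every diagonal block equals one matrix $B$, indexed by $\Lambda_{n-1}$, with $B_{xy}=1$ iff $x^{-1}y\in O(z^{p^i})$. This already gives $A_i=I_p\otimes B$.

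It remains to identify $B$ with the adjacency matrix of the $\Aut(\langle z^p\rangle)$-orbit of $z^{p^{i-1}}$ in $\mathcal{B}(\langle z^p\rangle)$, where the generator of $\langle z^p\rangle$ is $w=z^p$ and its elements are ordered by the same recursive construction. We check three points.
\begin{enumerate}[label=(\alph*)]
\item As a set, $O(z^{p^i})=\{z^{p^i u}\colon p\nmid u\}=\{w^{p^{i-1}u}\colon p\nmid u\}$, which is the orbit of $w^{p^{i-1}}$ under $\Aut(\langle w\rangle)$. Every unit modulo $p^{n-1}$ is a unit modulo $p^n$ reduced, so the exponent sets agree.
\item The ordering $\Lambda_{n-1}$ of $\langle z^p\rangle$ produced inside the construction for $G$ coincides with the ordering produced by running the construction for $\langle w\rangle$ of order $p^{n-1}$. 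Indeed, both start with $e$ followed by $\mathcal{U}^{p^{n-1}}_{p^n}=\mathcal{U}^{p^{n-2}}_{p^{n-1}}$ (in $w$) in ascending order, and then translate by the same powers $z^{jp^{k}}=w^{jp^{k-1}}$.
\item The ascending-order convention for the first layer also agrees. The exponents of $z$ are $p$ times the exponents of $w$, so the order is preserved.
\end{enumerate}

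The main obstacle is purely bookkeeping: making sure that the indexing in (b) and (c) matches exactly, so that $B$ is the adjacency matrix in the chosen ordering and not merely up to permutation. The block-structure argument itself is immediate once the cosets of $\langle z^p\rangle$ are seen to be contiguous in $\Lambda_n$. If $i=n$, the orbit is $\{e\}$ and $B=I_{p^{n-1}}$, consistent with $O(w^{p^{n-1}})=\{e\}$.
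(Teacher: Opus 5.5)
Your proposal is correct and takes the same block-decomposition route the paper uses (it defers to the argument of Lemma 4.3 in \cite{Bastiansgp}): the off-diagonal coset blocks vanish because $O(z^{p^i})\subseteq\langle z^p\rangle$, and all diagonal blocks equal the same matrix $B$. You go further than the paper in two places: you check that the recursive ordering of $\langle z^p\rangle$ agrees with the one built directly for $\langle w\rangle$, $w=z^p$, and you make explicit that $z^{p^{i-1}}$ in the statement has to be read as $w^{p^{i-1}}$ — both points the paper leaves implicit.
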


\begin{proof}
    Using the ordering given above, the proof is similar to the proof of Lemma 4.3 in \cite{Bastiansgp}.
\end{proof}

\begin{Cor}\label{cor:wreathaut}
    We have $\mathcal{B}(\mathcal{C}_{p^n})=\mathcal{B}(\mathcal{C}_{p^{n-1}})\wr \mathcal{K}_p$.
\end{Cor}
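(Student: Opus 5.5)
The plan is to read off the wreath product structure directly from the block forms in Lemma \ref{lem:autJ} and Lemma \ref{lem:autI}, using the ordering $\Lambda$ of $G$ built above. Recall the convention (as in \cite{Bastiansgp}) that for schemes $\mathcal{A}=(X,\{A_i\}_{i=0}^d)$ and $\mathcal{B}=(Y,\{B_j\}_{j=0}^e)$, the wreath product $\mathcal{A}\wr\mathcal{B}$ has adjacency matrices $I_{|Y|}\otimes A_i$ for $0\le i\le d$ together with $B_j\otimes J_{|X|}$ for $1\le j\le e$. With $\mathcal{A}=\mathcal{B}(\mathcal{C}_{p^{n-1}})$ and $\mathcal{B}=\mathcal{K}_p$ the claimed scheme therefore has the $n+1$ relations
\[
I_p\otimes B_0,\; I_p\otimes B_1,\;\dots,\; I_p\otimes B_{n-1},\; (J_p-I_p)\otimes J_{p^{n-1}},
\]
where the $B_i$ are the adjacency matrices of the $\Aut$-orbits in $\langle z^p\rangle\cong\mathcal{C}_{p^{n-1}}$. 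This matches the count of $n+1$ orbits of $\S$ from the proof of Proposition \ref{prop:autodim}.

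First I check that the ordering is compatible with the tensor decomposition. Under $\Lambda$, the group $G$ is listed as the $p$ blocks $z^{j}\cdot\Lambda_{n-1}$ for $0\le j\le p-1$, each of size $p^{n-1}$. These blocks are exactly the cosets of $\langle z^p\rangle$, and each is listed in the same internal order as $\Lambda_{n-1}$. So the outer $p\times p$ block index is the coset index, and the inner index is the position inside $\langle z^p\rangle$.

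Then I match the orbits of $\S$ one by one.
\begin{itemize}
\item The identity $O(e)$ has adjacency matrix $I_{p^n}=I_p\otimes I_{p^{n-1}}=I_p\otimes B_0$.
\item For $1\le i\le n-1$, Lemma \ref{lem:autI} gives $A_i=I_p\otimes B$, where $B$ is the adjacency matrix of the orbit of $z^{p^{i-1}}$ in $\mathcal{B}(\langle z^p\rangle)$, identifying $z^p$ with a generator of $\mathcal{C}_{p^{n-1}}$.
\item The orbit $O(z)$ is the only remaining one, and Lemma \ref{lem:autJ} gives $(J_p-I_p)\otimes J_{p^{n-1}}$, which is $\mathcal{K}_p$'s nontrivial relation tensored with $J$.
\end{itemize}

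The orbits $O(z^{p^i})$ for $1\le i\le n-1$ together with $O(e)$ are precisely the $\Aut(\langle z^p\rangle)$-orbits of $\langle z^p\rangle$. This holds because every automorphism of $\langle z^p\rangle$, namely $z^p\mapsto z^{pt}$ with $p\nmid t$, is the restriction of $\varphi_t\in\Aut(G)$. As $i$ ranges over $1,\dots,n-1$, the matrices $B$ therefore run over all the nonidentity relations $B_1,\dots,B_{n-1}$ of $\mathcal{B}(\mathcal{C}_{p^{n-1}})$, each appearing exactly once. Hence the adjacency matrices of $\S$ coincide, with the same indexing of points, with those of $\mathcal{B}(\mathcal{C}_{p^{n-1}})\wr\mathcal{K}_p$, and the corollary follows.

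The main obstacle is bookkeeping rather than substance. One must verify that the identification of $\langle z^p\rangle$ with $\mathcal{C}_{p^{n-1}}$ sends the ordering $\Lambda_{n-1}$ to the ordering used to define $\mathcal{B}(\mathcal{C}_{p^{n-1}})$. The recursive construction of $\Lambda$ is designed so that this holds: replacing $z$ by $z^p$ carries the $\Lambda$-construction for $n-1$ onto $\Lambda_{n-1}$. One should also confirm that the wreath-product convention matches the one implicit in Lemma \ref{lem:autJ}, with $\mathcal{K}_p$ on the outer coordinate. If that convention were reversed, the equality would hold only up to a simultaneous permutation of points, which still gives an isomorphism of schemes.
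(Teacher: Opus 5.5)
Your proposal is correct and follows the paper's proof essentially verbatim: both write down the adjacency matrices $I_p\otimes B_i$ and $(J_p-I_p)\otimes J_{p^{n-1}}$ of $\mathcal{B}(\mathcal{C}_{p^{n-1}})\wr\mathcal{K}_p$, then match them against the orbit matrices of $\S$, using Lemma \ref{lem:autJ} for $O(z)$ and Lemma \ref{lem:autI} for the orbits inside $\langle z^p\rangle$. Your two extra checks make explicit what the paper leaves implicit in the phrase $\langle z^p\rangle\cong\mathcal{C}_{p^{n-1}}$: that the $\Aut(G)$-orbits on $\langle z^p\rangle$ are exactly the $\Aut(\langle z^p\rangle)$-orbits, and that $\Lambda_{n-1}$ is the ordering used for $\mathcal{C}_{p^{n-1}}$.
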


\begin{proof}
    Let $B_0,B_1,\cdots, B_{n-1}$ be the adjacency matrices of $\mathcal{B}(\mathcal{C}_{p^{n-1}})$ where $B_i$ corresponds to the\\ $\Aut(\mathcal{C}_{p^{n-1}})-$orbit of $O(z^{p^i})$. Then the adjacency matrices of $\mathcal{B}(\mathcal{C}_{p^{n-1}})\wr \mathcal{K}_p$ are
    \[D_i=I_p\otimes B_i\ \text{(for $0\leq i\leq n-1$), } F_{n}=(J_p-I_p)\otimes J_{p^{n-1}}.\]
    Let $A_0,A_1,\cdots, A_n$ be the adjacency matrices of $\mathcal{B}(\mathcal{C}_{p^n})$, where $A_i$ corresponds to the $\Aut(G)-$orbit of $O(z^{p^i})$ in $\mathcal{C}_{p^n}$. By Lemma \ref{lem:autJ}, $A_0=(J_p-I_p)\otimes J_{p^{n-1}}$ as $A_0$ corresponds to the class $O(z)$. We note that $\langle z^p\rangle\cong \mathcal{C}_{p^{n-1}}$ and by Lemma \ref{lem:autI}, $A_i=I_p\otimes B_{i-1}$ for $1\leq i\leq n$. Thus, the adjacency matrices of $\mathcal{B}(\mathcal{C}_{p^n})$ are exactly the same as those of $\mathcal{B}(\mathcal{C}_{p^{n-1}})\wr \mathcal{K}_p$. Therefore, $\mathcal{B}(\mathcal{C}_{p^n})=\mathcal{B}(\mathcal{C}_{p^{n-1}})\wr \mathcal{K}_p$.
\end{proof}

\begin{Thm}\label{thm:wreathautop}
   Let $H=\Aut(G)$, and $\S=\S(G,H)$. Then \[\mathcal{B}(\mathcal{C}_{p^n})=\mathcal{K}_p\wr \mathcal{K}_p\wr\cdots \wr \mathcal{K}_p\] 
    where there are $n$ copies of $\mathcal{K}_p$ in this wreath product. 
\end{Thm}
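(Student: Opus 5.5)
The plan is to induct on $n$, using Corollary \ref{cor:wreathaut} as the inductive step. Everything needed is already set up; the induction only has to be anchored correctly and the notation unwound.

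\textbf{Base case.} Take $n=1$. Then $G=\mathcal{C}_p$ and $\Aut(G)$ acts transitively on the $p-1$ non-identity elements, since each is a generator. The principal sets are therefore $\{e\}$ and $G\setminus\{e\}$. The adjacency matrices are $A_0=I_p$ and $A_1=J_p-I_p$, so $\mathcal{B}(\mathcal{C}_p)=\mathcal{K}_p$, which is a wreath product with one copy of $\mathcal{K}_p$. The degenerate case $n=0$ would give the trivial one-point scheme; we exclude it because the paper assumes $n\geq 1$.

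\textbf{Inductive step.} Let $n\geq 2$ and assume $\mathcal{B}(\mathcal{C}_{p^{n-1}})=\mathcal{K}_p\wr\cdots\wr\mathcal{K}_p$ with $n-1$ factors. Corollary \ref{cor:wreathaut} gives
\[\mathcal{B}(\mathcal{C}_{p^n})=\mathcal{B}(\mathcal{C}_{p^{n-1}})\wr\mathcal{K}_p.\]
Substituting the inductive hypothesis produces $n$ copies of $\mathcal{K}_p$.

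\textbf{Associativity and bracketing.} We must check that the iterated wreath product is independent of bracketing, or else state the theorem with a fixed bracketing. The fixed bracketing that the recursion naturally yields is $(\cdots((\mathcal{K}_p\wr\mathcal{K}_p)\wr\mathcal{K}_p)\cdots)\wr\mathcal{K}_p$.

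Associativity can be checked directly at the level of adjacency matrices. In the convention of Corollary \ref{cor:wreathaut}, $\mathcal{X}\wr\mathcal{Y}$ has matrices $I_{|Y|}\otimes X_i$ together with $Y_j\otimes J_{|X|}$ for $j\neq 0$. Computing both bracketings of $\mathcal{X}\wr\mathcal{Y}\wr\mathcal{Z}$ gives the same family:
\[I\otimes I\otimes X_i,\qquad I\otimes Y_j\otimes J,\qquad Z_k\otimes J\otimes J.\]
This uses associativity of $\otimes$ and $J_a\otimes J_b=J_{ab}$. Hence the unbracketed expression is well defined, and the order of the factors is irrelevant here anyway, since all factors equal $\mathcal{K}_p$.

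\textbf{Main obstacle.} There is no genuine obstacle, since the substance was already carried out in Lemmas \ref{lem:autJ} and \ref{lem:autI}. The only points requiring care are:
\begin{itemize}
\item the element ordering of $G$ used in Corollary \ref{cor:wreathaut} restricts to the analogous ordering of $\langle z^p\rangle\cong\mathcal{C}_{p^{n-1}}$ used in the inductive hypothesis;
\item equality of schemes is meant up to this relabeling, i.e.\ up to permutation similarity of the adjacency matrices.
\end{itemize}
Both hold because the ordering $\Lambda_{n-1}$ built above is exactly that ordering of $\langle z^{p}\rangle$.
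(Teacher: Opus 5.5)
Your proposal is correct and follows the paper's route. The paper proves the theorem by induction on $n$, with base case $\mathcal{B}(\mathcal{C}_p)=\mathcal{K}_p$ and Corollary \ref{cor:wreathaut} as the inductive step, exactly as you do. Your additional checks are also correct and make explicit what the paper leaves implicit: that the iterated wreath product is associative at the level of adjacency matrices, and that the ordering $\Lambda_{n-1}$ coincides with the recursive ordering of $\langle z^p\rangle$.
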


\begin{proof}
    This result follows immediately by induction using Corollary \ref{cor:wreathaut}.
\end{proof}

% \begin{proof}
%     We proceed by induction on $n$. When $n=1$, we have $\mathfrak{B}(\mathcal{C}_p)=\mathcal{K}_p$ since the partition classes are $\{e\}$ and $G\setminus \{e\}$ in this case. Suppose we know the result is true for $\mathcal{C}_{p^m}$ for some $m\geq 1$. That is, $\mathcal{B}(\mathcal{C}_{p^{m}})=\mathcal{K}_p\wr \mathcal{K}_p\wr\cdots \mathcal{K}_p$ where there are $m$ copies of $\mathcal{K}_p$. We consider $\mathcal{C}_{p^{m+1}}$. By Corollary \ref{cor:wreathaut}, we have $\mathfrak{B}(\mathcal{C}_{p^{m+1}})=\mathcal{B}(\mathcal{C}_{p^m})\wr \mathcal{K}_p$. By the inductive hypothesis we have $\mathcal{B}(\mathcal{C}_{p^{m}})=\mathcal{K}_p\wr\mathcal{K}_p\wr\cdots \wr \mathcal{K}_p$. Therefore, $\mathfrak{B}(\mathcal{C}_{p^{m+1}})=\mathcal{K}_p\wr\mathcal{K}_p\wr\cdots \wr \mathcal{K}_p$ where there are $m+1$ copies of $\mathcal{K}_p$. This proves the inductive step.

%     Hence, by induction, we have for any $n\geq 1$ that $\mathcal{B}(\mathcal{C}_{p^n})=\mathcal{K}_p\wr \mathcal{K}_p\wr\cdots \wr \mathcal{K}_p$ where there are $n$ copies of $\mathcal{K}_p$ in this wreath product.
% \end{proof}

Now we shall consider $G=\mathcal{C}_{2^n}$. We need the following:

\begin{Prop}[5.7.13, \cite{Scott}]\label{thm:scott}
    Suppose $\mathcal{C}_{2^n}=\langle z\rangle$. If $n\geq 3$, then $\Aut(\mathcal{C}_{2^n})\cong H\times K$ where $H$ is the cyclic group of order $2^{n-2}$ generated by the map $\varphi_5$ and $K$ is the cyclic group of order $2$ generated by the map $\varphi_{-1}$.
\end{Prop}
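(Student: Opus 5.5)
The plan is to pass to the unit group modulo $2^n$ and run the classical argument. The map $k\mapsto \varphi_k$ gives an isomorphism from the multiplicative group $(\Z/2^n\Z)^\times$ onto $\Aut(\mathcal{C}_{2^n})$. Indeed, every automorphism is determined by $z\mapsto z^k$ with $k$ odd, and $\varphi_k\varphi_l=\varphi_{kl}$. So $\Aut(\mathcal{C}_{2^n})$ is abelian of order $\phi(2^n)=2^{n-1}$, and it suffices to show that $\langle 5\rangle$ has order $2^{n-2}$, that $\langle -1\rangle$ has order $2$, and that $\langle 5\rangle\cap\langle -1\rangle=\{1\}$ in $(\Z/2^n\Z)^\times$.

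The key step, which I expect to be the only real obstacle, is computing the order of $5$. I will prove by induction on $k\geq 0$ that
\[5^{2^k}=1+2^{k+2}m_k \quad\text{with } m_k \text{ odd}.\]
The base case is $5=1+4\cdot 1$. For the inductive step, squaring gives
\[5^{2^{k+1}}=1+2^{k+3}m_k+2^{2k+4}m_k^2=1+2^{k+3}\bigl(m_k+2^{k+1}m_k^2\bigr),\]
and $m_k+2^{k+1}m_k^2$ is odd. Taking $k=n-2$ shows $5^{2^{n-2}}\equiv 1 \pmod{2^n}$. Taking $k=n-3$ (valid since $n\geq 3$) shows $5^{2^{n-3}}\equiv 1+2^{n-1} \not\equiv 1 \pmod{2^n}$. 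Since the order of $5$ divides $2^{n-2}$ but does not divide $2^{n-3}$, it equals $2^{n-2}$. Hence $H=\langle\varphi_5\rangle$ is cyclic of order $2^{n-2}$.

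Next, $-1\not\equiv 1 \pmod{2^n}$ because $n\geq 2$, so $K=\langle\varphi_{-1}\rangle$ has order $2$. Every power of $5$ is $\equiv 1 \pmod 4$, whereas $-1\equiv 3 \pmod 4$ (using $n\geq 2$ again). Hence $-1\notin\langle 5\rangle$, and so $H\cap K$ is trivial. In the abelian group $\Aut(\mathcal{C}_{2^n})$ this gives $HK\cong H\times K$, a subgroup of order $2^{n-2}\cdot 2=2^{n-1}=|\Aut(\mathcal{C}_{2^n})|$. Therefore $\Aut(\mathcal{C}_{2^n})=H\times K$, as claimed.
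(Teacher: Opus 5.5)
Your proof is correct and complete. The isomorphism $(\Z/2^n\Z)^\times\cong\Aut(\mathcal{C}_{2^n})$, the induction $5^{2^k}=1+2^{k+2}m_k$ with $m_k$ odd (which pins the order of $5$ at exactly $2^{n-2}$), the mod-$4$ obstruction giving $\langle 5\rangle\cap\langle -1\rangle=1$, and the order count together give the internal direct product. The paper gives no proof of its own here, only the citation to Scott (5.7.13) for this classical fact, and your argument is the standard one.
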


\begin{Prop}\label{prop:proper2n}
    Let $\mathcal{C}_{2^n}=\langle z\rangle$ with $n\geq 3$. Let $H\leq \Aut(\mathcal{C}_{2^n})$. Then $H$ is one of the following:
    \begin{enumerate}
        \item $\langle \varphi_{5^k}\rangle$ for some $0\leq k< 2^{n-2}$;
        \item $\langle \varphi_{-5^k}\rangle$ for some $0\leq k< 2^{n-2}$;
        \item $\langle \varphi_{-1},\varphi_{5^k}\rangle$ for some $0\leq k<2^{n-2}$.
    \end{enumerate}
\end{Prop}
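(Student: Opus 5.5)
We will derive Proposition \ref{prop:proper2n} from Proposition \ref{thm:scott}, which lets us identify $\Aut(\mathcal{C}_{2^n})$ with $A=\langle \varphi_5\rangle\times\langle\varphi_{-1}\rangle\cong \mathcal{C}_{2^{n-2}}\times\mathcal{C}_2$. Composition of these maps is multiplication of exponents mod $2^n$: $\varphi_a\varphi_b=\varphi_{ab}$. So every element of $A$ has the form $\varphi_{\pm 5^k}$ with $0\le k<2^{n-2}$. The task is then purely group-theoretic: classify the subgroups of $\mathcal{C}_{2^{n-2}}\times \mathcal{C}_2$ and write each one in one of the three listed forms.

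Let $H\le A$ and let $\pi\colon A\to\langle\varphi_5\rangle$ be the projection $\varphi_{\pm 5^k}\mapsto\varphi_{5^k}$. Set $C=H\cap\langle\varphi_5\rangle$ and $D=\pi(H)$. Both are subgroups of the cyclic group $\langle\varphi_5\rangle$, so $C=\langle\varphi_{5^k}\rangle$ and $D=\langle \varphi_{5^j}\rangle$ for some $k,j$, and $C\le D$. The kernel of $\pi|_H$ is $H\cap\langle\varphi_{-1}\rangle$, which has order $1$ or $2$. This gives the following case split.

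\textbf{Case $\varphi_{-1}\in H$.} Then $H=\pi^{-1}(D)=\langle\varphi_{-1},\varphi_{5^j}\rangle$, which is form 3.

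\textbf{Case $\varphi_{-1}\notin H$.} Then $\pi|_H$ is injective, so $H\cong D$ is cyclic. It is generated by the unique preimage of $\varphi_{5^j}$, which is either $\varphi_{5^j}$ or $\varphi_{-5^j}$. This gives form 1 or form 2.

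The only care needed is bookkeeping. We must make sure that $k$ can be taken in the range $0\le k<2^{n-2}$, reducing the exponent of $5$ modulo its order $2^{n-2}$. We must also make sure the trivial subgroup is covered, as $\langle\varphi_{5^0}\rangle$ with $k=0$. The group $\langle\varphi_{-1}\rangle$ is covered as well, as form 2 with $k=0$.

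There is no real obstacle; the main point to justify is that a subgroup of $\mathcal{C}_{2^{n-2}}\times\mathcal{C}_2$ meeting $\mathcal{C}_2$ trivially is cyclic. This holds because it embeds in $\mathcal{C}_{2^{n-2}}$ via $\pi$. The statement does not assert that the three lists are disjoint or irredundant, only that every $H$ appears in them, so we will not address repetitions.
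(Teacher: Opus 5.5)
Your proposal is correct and follows the paper's approach. The paper's entire proof is ``this follows from Proposition \ref{thm:scott},'' and you supply the omitted subgroup classification. You split on whether $\varphi_{-1}\in H$ and use the projection onto $\langle\varphi_5\rangle$: it is injective on $H$ when $\varphi_{-1}\notin H$, and $H=\pi^{-1}(\pi(H))$ otherwise.
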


\begin{proof}
    This follows from Proposition \ref{thm:scott}.
\end{proof}

We now show that for $1< H< \Aut(G)$, if $\S=\S(G,H)$ then $T(G,\S)$ is not AC. We need:

% \begin{proof}
%     As $1+5^k\equiv 2(5)^{rk}\mod 2^n$, $1+5^k= 2(5)^{rk}+\ell2^n$ for some $\ell\in \Z$. Thus, $1+5^k\equiv 2(5)^{rk}+2\ell 2^{n-1}$. Hence, $1+5^k\equiv 2(5)^{rk} \mod 2^{n-1}$.
% \end{proof}

\begin{Lem}\label{lem:noncyc}
    Let $n\geq 3$ and $r\in \N$. If $5^k\not\equiv 1 \mod 2^n$, then $1+5^k\not\equiv 2(5)^{rk}\mod 2^n$. 
\end{Lem}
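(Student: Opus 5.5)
The plan is to compare $2$-adic valuations. Write $v_2(x)$ for the exponent of the largest power of $2$ dividing a nonzero integer $x$. Suppose, for contradiction, that $1+5^k\equiv 2(5)^{rk}\bmod 2^n$. Subtracting $2$ from both sides turns this into
\[5^k-1\equiv 2\left(5^{rk}-1\right)\bmod 2^n.\]
I will show that the two sides have different $2$-adic valuations, and that the smaller one is less than $n$. That makes the congruence impossible.

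The key tool is the lifting-the-exponent identity $v_2(5^m-1)=2+v_2(m)$ for every integer $m\geq 1$. Since it is not stated earlier in the paper, I would prove it directly by induction on $v_2(m)$.
\begin{itemize}
\item For odd $m$, write $5^m-1=(5-1)(5^{m-1}+\cdots+1)$. The second factor is a sum of $m$ odd terms, hence odd, so $v_2(5^m-1)=2$.
\item For even $m=2m'$, write $5^{m}-1=(5^{m'}-1)(5^{m'}+1)$. Since $5^{m'}\equiv 1\bmod 4$, we have $5^{m'}+1\equiv 2\bmod 4$, so $v_2(5^m-1)=v_2(5^{m'}-1)+1$.
\end{itemize}
I take $k\geq 1$. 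If $k=0$, then $5^k=1$, which the hypothesis excludes.

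Set $v=v_2(5^k-1)=2+v_2(k)$. The hypothesis $5^k\not\equiv 1\bmod 2^n$ says exactly that $v<n$. Now split on $r$.
\begin{itemize}
\item If $r=0$, the right side $2(5^{rk}-1)$ is $0$. The congruence would then force $2^n\mid 5^k-1$, which contradicts the hypothesis.
\item If $r\geq 1$, then $v_2\bigl(2(5^{rk}-1)\bigr)=3+v_2(rk)\geq 3+v_2(k)=v+1$. Hence the difference $(5^k-1)-2(5^{rk}-1)$ has valuation exactly $v$, because one term has valuation $v$ and the other has strictly larger valuation. Since $v<n$, this difference is not divisible by $2^n$, which contradicts the assumed congruence.
\end{itemize}

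The only step needing care is the valuation identity $v_2(5^m-1)=2+v_2(m)$. Everything else is a routine comparison of valuations. The hypothesis $n\geq 3$ is not actually needed by this argument; it only matches the setting of Proposition \ref{thm:scott}.
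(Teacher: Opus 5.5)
Your proof is correct, and it takes a genuinely different route from the paper's.

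\textbf{The paper's route.} The paper inducts on $n$. The base case $n=3$ is a direct check modulo $8$. In the inductive step it separates two cases.
\begin{itemize}
\item If $5^k\not\equiv 1 \bmod 2^m$, the induction hypothesis applies.
\item If $5^k\equiv 1\bmod 2^m$, it uses that $5$ has order $2^{m-2}$ in $(\Z/2^m\Z)^\times$ to write $k=2^{m-2}\ell$. It then splits on the parity of $r$ to get $2\cdot 5^{rk}\equiv 2$ or $2\cdot 5^{rk}\equiv 2\cdot 5^k$ modulo $2^{m+1}$. Either congruence forces $5^k\equiv 1\bmod 2^{m+1}$.
\end{itemize}

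\textbf{Your route.} You prove the exact formula $v_2(5^m-1)=2+v_2(m)$ once, then compare the valuations of $5^k-1$ and $2(5^{rk}-1)$ directly. This removes both the induction on $n$ and the parity split on $r$. It also isolates the real reason the lemma holds: for $r\geq 1$, the term $2(5^{rk}-1)$ is always strictly more $2$-divisible than $5^k-1$.

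\textbf{What each buys.} The underlying input is essentially the same, since the order of $5$ modulo $2^m$ is the case $m=2^j$ of your formula. Your version has several advantages:
\begin{itemize}
\item it is self-contained, whereas the paper quotes that order without proof;
\item it treats $r=0$ explicitly;
\item it shows that $n\geq 3$ only serves to make the hypothesis non-vacuous;
\item it avoids the modulus bookkeeping in the paper's odd-$r$ step.
\end{itemize}
On the last point: the paper writes $5^{rk}\equiv 5^k$ modulo $2^{m-1}$. The next line needs this modulo $2^m$, and it actually holds modulo $2^{m+1}$.
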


\begin{proof}
    We induct on $n\geq 3$. For $n=3$, we have $5^k\equiv 1 \mod 8$ if $k$ is even and $5^k\equiv 5 \mod 8$ if $k$ is odd. Then for all $k$ where $5^k\not\equiv 1 \mod 8$, we have $1+5^k\equiv 6 \mod 8$. However, $2(5)^{rk}\equiv 2 \mod 8$ for any $rk$. Thus for $n=3$, we have $1+5^k\not\equiv 2(5)^{rk}\mod 2^n$ if $5^k\not\equiv 1 \mod 2^n$.

    Now suppose $m\geq 3$ and that for all $k$ such that $5^k\not\equiv 1 \mod 2^m$, we have $1+5^k\not\equiv 2(5)^{rk} \mod 2^m$ for $r\in \N$. Fix $k$ such that $5^k\not\equiv 1 \mod 2^{m+1}$. We consider $2$ cases.\\
    {\bf Case 1: } $5^k\not\equiv 1 \mod 2^{m}$. Then $1+5^k\not\equiv 2(5)^{rk} \mod 2^m$ for $r\in \Z$. Then $1+5^k\not\equiv 2(5)^{rk} \mod 2^{m+1}$ for $r\in \N$.\\
    {\bf Case 2: } $5^k\equiv 1 \mod 2^m$. Then the order of $5$ in $\mathcal{C}_{2^m}^\times$ must divide $k$. The order of $5$ in $\mathcal{C}_{2^m}^\times$ is $2^{m-2}$. Hence $k=2^{m-2}\ell$, so $5^{rk}=5^{r2^{m-2}\ell}$ for $r\in \Z$. If $r=2t$ for $t\in \Z$, then $5^{rk}=5^{2t2^{m-2}\ell}=5^{2^{m-1}t\ell}\equiv 1 \mod 2^{m+1}$ as $5$ has order $2^{m-1}$ in $\mathcal{C}_{2^{m+1}}^*$. If $r=2t+1$ for $t\in \Z$, then $5^{rk}=5^{(2t+1)2^{m-2}\ell}=5^{2^{m-1}t\ell+2^{m-2}\ell}\equiv 5^{k}\mod 2^{m-1}$. Thus
    \[2(5^{rk})\equiv \left\{\begin{array}{cc}
        2 \mod 2^{m+1} & \text{ if $r$ is even}  \\
        2\cdot 5^{k} \mod 2^{m+1} & \text{ if $r$ is odd}
    \end{array}\right..\]
    For even $r$ if $1+5^k\equiv 2(5)^{rk} \mod 2^{m+1}$, then $1+5^k\equiv 2 \mod 2^{m+1}$. So $5^k\equiv 1 \mod 2^{m+1}$. If $r$ is odd with $1+5^k\equiv 2(5^{rk})\mod 2^{m+1}$, we have $1+5^k\equiv 2\cdot 5^{k}\mod 2^{m+1}$. Then $1\equiv 5^k \mod 2^{m+1}$. So whether $r$ is even or odd we get $5^k\equiv 1 \mod 2^{m+1}$, a contradiction.
\end{proof}

% We thus must have $1+5^k\not\equiv 2(5)^{rk} \mod 2^{m+1}$ for any choice of $r$ in this case.

    % Then by induction we have for all $n\geq 3$ and $k$ such that $5^k\not\equiv 1 \mod 2^n$, we have $1+5^k\not\equiv 2(5)^{rk}\mod 2^n$ for any $r\in \Z$.

\begin{Thm}\label{thm:2case'}
    Let $n\geq 3$, $G=\mathcal{C}_{2^n}$, $1<H<\Aut(G)$, and $\S=\S(G,H)$. Then $T(G,\S)$ is not AC.
\end{Thm}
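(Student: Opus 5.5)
The plan is to apply the criterion of Theorem \ref{thm:sequiv}, which is valid here since $G$ is abelian and so $\S(G,H)$ is commutative. For each subgroup $1<H<\Aut(G)$ we exhibit $x,y\in G$ with $O(x)\neq O(y)^*$ such that $O(x)O(y)$ meets two distinct $H$-orbits, so that $O(x)O(y)$ is not a single principal set. By Proposition \ref{prop:proper2n}, $H$ is of one of the three listed forms. We treat each form separately, choosing $x,y$ among $z,z^{2}$ and their images, and separate orbits by congruences modulo $8$ or modulo $2^n$.

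\textbf{Case 1: $H=\langle\varphi_{5^k}\rangle$ with $5^k\not\equiv1\bmod 2^n$.} Here $O(z)=\{z^{5^{jk}}\}$. Since $-1\notin\langle 5\rangle$ modulo $2^n$, we have $z^{-1}\notin O(z)$, so $O(z)\neq O(z)^*$. Take $x=y=z$. Then $O(z)O(z)$ contains $z^2$ and $z^{1+5^k}$. The orbit $O(z^2)$ is $\{z^{2\cdot5^{rk}}\}$, and Lemma \ref{lem:noncyc} says exactly that $1+5^k\not\equiv 2\cdot5^{rk}\bmod 2^n$. Hence these two elements lie in different orbits.

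\textbf{Case 2: $H=\langle\varphi_{-5^k}\rangle$.} First suppose $5^k\equiv1$, so $H=\langle\varphi_{-1}\rangle$ and every orbit is $\{z^a,z^{-a}\}$. Take $x=z$, $y=z^2$; then $O(z)\neq O(z^2)=O(z^2)^*$. The product is $\{z^{\pm1},z^{\pm3}\}$, which is two orbits because $3\not\equiv\pm1\bmod 2^n$ for $n\ge3$.

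Next suppose $5^k\not\equiv1$. Then $z^{-1}\in O(z)$ would need $(-5^k)^j\equiv-1$ with $j$ odd and $5^{kj}\equiv1$. Since the order of $5^k$ is a nontrivial power of $2$, this is impossible, so $O(z)\neq O(z)^*$. Take $x=y=z$; the product contains $z^2$ and $z^{1+5^{2k}}$, using $(-5^k)^2=5^{2k}$. The orbit $O(z^2)$ is $\{z^{\pm2\cdot5^{m}}\}$ for suitable $m$. There are two subcases.

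\begin{enumerate}[label=(\alph*)]
\item If $5^{2k}\not\equiv1$, Lemma \ref{lem:noncyc} rules out $1+5^{2k}\equiv 2\cdot5^{2kr}$. The opposite sign is ruled out modulo $8$: $1+5^{2k}\equiv2$, whereas $-2\cdot5^m\equiv6\bmod 8$.
\item If $5^{2k}\equiv1$, then $5^k\equiv1+2^{n-1}$. Use instead $z^{1-5^k}=z^{2^{n-1}}\in O(z)O(z)$. Every element of $O(z^2)$ is $z$ raised to twice an odd number, and $2^{n-1}$ is not of that form when $n\ge3$.
\end{enumerate}

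\textbf{Case 3: $H=\langle\varphi_{-1},\varphi_{5^k}\rangle$.} Properness forces $k$ even, so $\langle5^k\rangle\subseteq\langle25\rangle$ consists of residues $\equiv1\bmod 8$. All orbits are self-inverse, and we take $x=z$, $y=z^2$, which have distinct orbits. The product contains $z^{-1}z^2=z$ and $z\cdot z^2=z^3$. Every element of $O(z)$ is $z^{c}$ with $c\equiv\pm1\bmod 8$, and $3$ is not of that form, so $z^3\notin O(z)$.

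The main obstacle is the bookkeeping in Case 2, where the generator $-5^k$ mixes signs. There one must check separately that $O(z)$ is not self-inverse and that the two sign classes of $O(z^2)$ are both avoided; the subcase in which $5^k$ has order $2$ needs the separate element $z^{2^{n-1}}$. In every case Theorem \ref{thm:sequiv} then shows that $T(G,\S)$ is not AC.
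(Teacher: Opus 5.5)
Your proof is correct and follows the paper's overall strategy. You split into the three families of Proposition~\ref{prop:proper2n}, and in each you exhibit orbits $O(x)\neq O(y)^*$ whose product meets two orbits, contradicting Theorem~\ref{thm:sequiv}. Case~1 is the paper's argument. In Case~3, your pair $O(z)O(z^2)\ni z,z^3$ (separated mod $8$) replaces the paper's $O(z)O(z^5)\ni z^{5^{2k}-5}$ (separated mod $4$), which is an inessential change.

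The substantive difference is Case~2. The paper's proof actually treats $\langle\varphi_{(-5)^k}\rangle$ and dismisses even $k$ as Case~1. But for even $k$ with $5^k\not\equiv 1$, the listed subgroup $\langle\varphi_{-5^k}\rangle$ neither lies inside $\langle\varphi_5\rangle$ nor contains $\varphi_{-1}$. An example is $\{\varphi_1,\varphi_7\}\le\Aut(\mathcal{C}_{16})$, with $7\equiv-5^2$. So the paper's case analysis never handles these subgroups. Your Case~2 covers every $k$, including this family via your subcases (a) and (b), and so closes that omission.

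Your Case~2 can also be shortened. For $\psi=\varphi_{-5^k}$ with $5^k\not\equiv 1$, the element $z\,\psi(z)=z^{1-5^k}\in O(z)O(z)$ has exponent $\equiv 0\pmod 4$. Every element of $O(z^2)$ has exponent $\equiv 2\pmod 4$. This is the element you already use in subcase~(b), and for odd $k$ it is exactly the paper's $z^{1+(-5)^k}$. It works uniformly in $k$, so the appeal to Lemma~\ref{lem:noncyc}, the mod-$8$ sign check, and the (a)/(b) split are all unnecessary.
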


\begin{proof}
     Suppose $T(G,\S)$ is AC. We consider three cases based on what $H$ can be from Proposition \ref{prop:proper2n}. 

    {\bf Case 1: }$H=\langle \varphi_{5^k}\rangle$, $0\leq k <2^{n-2}$. Since $H\neq \{1\}$, $5^k\not\equiv 1 \mod 2^n$. By Theorem \ref{thm:scott}, $\Aut(\mathcal{C}_{2^n})\cong \mathcal{C}_2\times \mathcal{C}_{2^{n-2}}$ where $\mathcal{C}_2=\langle \varphi_{-1}\rangle$ and $\mathcal{C}_{2^{n-2}}=\langle \varphi_5\rangle$. In, particular, $\varphi_{-1}\neq \varphi_{5^\ell}$ for all $\ell\in \Z$. Then for all $\psi\in H$ we have $\psi(z)\neq z^{-1}$, so $O(z)\neq O(z)^*$, and as $T(G,\S)$ is AC, $O(z)O(z)=O(z^2)$. Every element of $O(z^2)$ is of the form $z^{2(5^{rk})}$, $r\in \Z$. Now $z^{1+5^k}\in O(z)O(z)$, so $1+5^k\equiv 2(5)^{rk} \mod 2^{n}$. However, by Lemma \ref{lem:noncyc} we have $1+5^k\not\equiv 2(5^{rk})$, a contradiction.

    {\bf Case 2: }$H=\langle \varphi_{(-5)^k}\rangle$ for some $0\leq k<2^{n-2}$. If $k$ is even, then $(-5)^{rk}=5^{rk}$ for all $r$ and we have Case 1. So assume $k$ is odd. Since $H$ is nontrivial $(-5)^k\not\equiv 1 \mod 2^n$. As in Case $1$, we have $O(z)\neq O(z)^*$. As $T(G,\S)$ is AC we have $O(z)O(z)=O(z^2)$. Every element of $O(z^2)$ is of the form $z^{2(-5)^{rk}}$. Clearly $z^{1+(-5)^k}\in O(z)O(z)$, so $1+(-5)^k\equiv 2(-5)^{rk} \mod 2^n$. As $n\geq 3$ this implies, $1+(-5)^k\equiv 2(-5)^{rk} \mod 8$. We consider $2(-5)^{rk} \mod 8$. If $rk$ is even then $(-5)^{rk}\equiv 1 \mod 8$ and $2(-5)^{rk}\equiv 2 \mod 8$.  Then $1+(-5)^k\equiv 2\mod 8$, so $(-5)^k\equiv 1 \mod 8$. However, as $k$ is odd $(-5)^k\equiv -5 \mod 8$, a contradiction. If $rk$ is odd, then $(-5)^{rk}\equiv -5 \mod 8$, so $2(-5)^{rk}\equiv 6 \mod 8$. Then $1+(-5)^{rk}\equiv 6 \mod 8$, so $(-5)^k\equiv 5 \mod 8$. However, as $k$ is odd $(-5)^k\equiv -5 \mod 8$, a contradiction. So $T(G,\S)$ cannot be AC when $H=\langle \varphi_{(-5)^k}\rangle$.

    {\bf Case 3: }$H=\langle \varphi_{-1},\varphi_{5^k}\rangle$, $0\leq k<2^{n-2}$. Here $O(z)=O(z)^*$. Also every element of $O(z)$ is of the form $z^{\pm 5^{rk}}$, $r\in \Z$. Notice that if $z^5\in O(z)$ then $\varphi_5\in H$. However, as $\varphi_{-1},\varphi_5\in H$ generate $\Aut(G)$, $H=\Aut(G)$, a contradiction. So $z^5\not\in O(z)$. Then as $T(G,\S)$ is AC, $O(z)O(z^5)=O(z^6)$. Every element of $O(z^6)$ is of the form $z^{6(\pm 5^{rk})}$. Clearly $z^{5^{2k}-5}\in O(z)O(z^5)$ and so $5^{2k}-5\equiv 6(\pm 5^{rk})\mod 2^n$. As $n\geq 3$, this implies $5^{2k}-5\equiv 6(\pm 5^{rk})\mod 4$. However, $5^{2k}-5\equiv 1-1 \equiv 0 \mod 4$ and $6(\pm 5^{rk})\equiv 2 \mod 4$, a contradiction. So $T(G,\S)$ cannot be AC when $H=\langle \varphi_{-1},\varphi_{5^k}\rangle$.
\end{proof}

\begin{Thm}\label{thm:2case}
    Let $G=\mathcal{C}_{2^n}$, $H\leq \Aut(G)$, and $\S=\S(G,H)$. Then $T(G,\S)$ is AC if and only if $H$ is either $\Aut(G)$ or trivial.
\end{Thm}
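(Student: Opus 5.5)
The plan is to assemble the theorem from the results already proved, splitting on $n$. For the "if" direction: when $H$ is trivial, $\S(G,H)$ is the group algebra $\C[G]$. Every principal set is then a singleton, so $P_iP_j=\{xy\}$ is always a single principal set. Hence the condition of Theorem \ref{thm:sequiv} holds and $T(G,\S)$ is AC. When $H=\Aut(G)$, Theorem \ref{thm:2nauto} with $p=2$ gives the result directly.

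For the "only if" direction we must show that every $H$ with $1<H<\Aut(G)$ fails to be AC. For $n\geq 3$ this is exactly Theorem \ref{thm:2case'}. It remains to handle the small cases $n=0,1,2$, which Theorem \ref{thm:2case'} excludes because Proposition \ref{thm:scott} needs $n\geq 3$.

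For $n=0$ and $n=1$, $\Aut(G)$ is trivial, so $H$ is both trivial and all of $\Aut(G)$, and there is nothing to prove.

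For $n=2$, $G=\mathcal{C}_4$ and $\Aut(G)=\langle\varphi_{-1}\rangle\cong\mathcal{C}_2$. The only subgroups are $1$ and $\Aut(G)$, so again no proper nontrivial $H$ exists.

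Thus the classification is complete. The only point needing care is confirming that the small cases $n\le 2$ leave no room for an intermediate $H$, which follows from $|\Aut(\mathcal{C}_{2^n})|=\phi(2^n)=2^{n-1}\le 2$. No real obstacle is expected, since the substantive work was done in Theorem \ref{thm:2case'}.
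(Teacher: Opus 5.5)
Your proposal is correct and is essentially the paper's own proof. Both arguments get the ``if'' direction from the group-ring case and Theorem \ref{thm:2nauto}, exclude $1<H<\Aut(G)$ for $n\geq 3$ via Theorem \ref{thm:2case'}, and note that for $n\leq 2$ the group $\Aut(\mathcal{C}_{2^n})$ has order at most $2$, so no intermediate subgroup exists. This does not affect your assembly, but since you rest all the substance on Theorem \ref{thm:2case'}, be aware that its Case 2 as written treats $\langle\varphi_{(-5)^k}\rangle$ rather than $\langle\varphi_{-5^k}\rangle$, so for $n\geq 4$ subgroups such as $\langle\varphi_{7}\rangle=\langle\varphi_{-25}\rangle\leq\Aut(\mathcal{C}_{16})$ are not literally covered, though the same $O(z)O(z)=O(z^2)$ argument, read mod $8$, disposes of them.
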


\begin{proof}
    The case where $n=1$ is clear as the only Schur ring over $\mathcal{C}_2$ is $\C[\mathcal{C}_2]=\S(\mathcal{C}_2,\Aut(\mathcal{C}_2))$ and $T(G,\C[G])$ is AC. When $n=2$, $G=\mathcal{C}_4$. If $H$ is trivial $T(G,\S)$ is AC and if not then $H=\Aut(\mathcal{C}_4)$. By Theorem \ref{thm:2nauto}, $T(G,\S)$ is AC. For any $n\geq 3$, if $H$ is trivial then $T(G,\S)$ is AC as $\S$ is the group ring. By Theorem \ref{thm:2nauto}, if $H=\Aut(G)$, then $T(G,\S)$ is AC. For $1<H<\Aut(G)$ we have by Theorem \ref{thm:2case'} that $T(G,\S)$ is not AC. Hence, for all $n$, $T(G,\S)$ is AC if and only if $H$ is either $\Aut(G)$ or trivial.
\end{proof}

Before looking at $G=\mathcal{C}_{p^n}$ for $p$ odd, we prove the following lemma:

\begin{Lem}\label{lem:charac}
    Let $H\leq \Aut(G)$, and $\S=\S(G,H)$. Suppose that $T(G,\S)$ is AC. Now let $K\text{ char }G$ Then the $H-$orbits of $K$ form an orbit Schur ring over $K$, denoted by $\mathfrak{T}$ and $T(K,\mathfrak{T})$ is AC.
\end{Lem}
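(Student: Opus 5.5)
Since $K$ is characteristic in $G$, every $\varphi\in H$ satisfies $\varphi(K)=K$. The map $\varphi\mapsto \varphi|_K$ is therefore a homomorphism $H\to \Aut(K)$; let $\bar H$ denote its image. For $g\in K$ the $H$-orbit $O(g)$ lies inside $K$ and coincides with the $\bar H$-orbit of $g$. Hence the $H$-orbits contained in $K$ partition $K$ and are exactly the orbits of $\bar H\leq \Aut(K)$, so $\mathfrak{T}=\S(K,\bar H)$ is an orbit Schur ring over $K$. The principal sets of $\mathfrak{T}$ are precisely those principal sets of $\S$ that are contained in $K$. Since $G$ is cyclic, $K$ is abelian, so $\mathfrak{T}$ is commutative and Theorem \ref{thm:sequiv} applies to it. (Here $G$ is the cyclic group of this section, so $\S$ is commutative as well.)

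To show $T(K,\mathfrak{T})$ is AC, I will verify the criterion of Theorem \ref{thm:sequiv} for $\mathfrak{T}$. Take $x,y\in K$ with $x\in P_i$, $y\in P_j$, $xy\in P_h$, where $P_i,P_j,P_h$ are principal sets of $\mathfrak{T}$ and $P_i\neq P_j^*$. These are also principal sets of $\S$. Inversion commutes with automorphisms, so the inverse of the $\bar H$-orbit of $y$ is the $H$-orbit of $y^{-1}$, and the condition $P_i\neq P_j^*$ means the same thing in $\S$ and in $\mathfrak{T}$. Since $T(G,\S)$ is AC, Theorem \ref{thm:sequiv} for $\S$ gives $P_iP_j=P_h$ as subsets of $G$. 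The product set $P_iP_j$ does not depend on the ambient group, so the same equality holds in $K$. Applying Theorem \ref{thm:sequiv} to $\mathfrak{T}$ then shows that $T(K,\mathfrak{T})$ is AC.

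The only step needing care is the identification of the principal sets of $\mathfrak{T}$ with those of $\S$ contained in $K$, together with the compatibility of the $*$ operation. Both follow directly from $K$ being characteristic, which is what makes every $H$-orbit meeting $K$ lie entirely in $K$.
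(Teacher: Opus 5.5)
Your proof is correct and follows the same route as the paper. The $H$-orbits inside the characteristic subgroup $K$ form the orbit Schur ring, and the criterion of Theorem \ref{thm:sequiv} for $\S$ carries over to $\mathfrak{T}$, since any $H$-orbit equal to a product of two orbits in $K$ lies in $K$. Your version is slightly more explicit: it identifies the restricted automorphism group of $K$ and checks that $P_i\neq P_j^*$ means the same thing in both rings.
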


\begin{proof}
    As $K$ is characteristic, the orbits $C_0=\{e\},C_1,\cdots, C_d$ determine a Schur ring over $K$. Now assume that $C_i\neq C_j^*$ and consider $C_iC_j$. As $T(G,\S)$ is AC, $C_iC_j=D_t$ for some $H-$orbit $D_t$ of $G$ by Theorem \ref{thm:sequiv}. However, $D_t\subseteq K$, so $D_t=C_h$ for some $h\leq d$ and $C_iC_j=C_h$. So $T(K,\mathfrak{T})$ is AC by Theorem \ref{thm:sequiv}.
\end{proof}

% As $K$ is a characteristic subgroup of $G$, we have that for all $\varphi\in H$, $\varphi(K)=K$. Let $C_0,C_1,\cdots, C_d$ be the $H-$orbits of $G$ consisting of elements of $K$. Then $K=\bigcup_{i=0}^d C_i$. Clearly $\{e\}$ is one of the $C_i$ as $e\in K$. Also if $C_i\subseteq K$, then $C_i^*\subseteq K$. Hence, $C_i^*=C_j$ for some $j$. So the partition $C_0,\cdots, C_d$ of $K$ satisfies the first two conditions of being a Schur ring. Now consider $\overline{C_i}\cdot\overline{C_j}$. As this is a product of principal sets in $\S$ we have $\overline{C_i}\cdot\overline{C_j}=\sum_t \lambda_{ij}^t \overline{D_t}$ where the $D_t$ are principal classes in $\S$. However, $C_iC_j$ is a product of elements of $K$ and thus $C_iC_j\subseteq K$. So $D_t=C_h$ for some $H-$orbit of $K$ for each $t$. Then $\overline{C_i}\cdot\overline{C_j}=\sum_h \lambda_{ij}^h \overline{C_h}$ and $\mathfrak{T}$ is a Schur ring.

For the rest of this section we assume $p$ is odd and $G=\mathcal{C}_{p^n}=\langle z\rangle$ for $n\geq 1$. Then $\Aut(G)=\mathcal{C}_{\phi(p^n)}$. We start by proving there is $H\leq \Aut(G)$, where the corresponding Terwilliger algebra is AC.

\begin{Prop}\label{prop:1+p}
    Let $H=\langle \varphi_{1+p}\rangle$, and $\S=\S(G,H)$. Then $T(G,\S)$ is AC and $\dim T(G,\S)= \frac{1}{2}(3n^2p^2-6n^2p+3n^2-np^2+6np-5n+2)$. 
\end{Prop}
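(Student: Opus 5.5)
The plan is to first determine the $H$-orbits explicitly, observe that each is a coset of a subgroup of $G$, and then apply Theorem \ref{thm:sequiv} and count nonzero intersection numbers as in Proposition \ref{prop:autodim}.

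\textbf{Step 1: the orbits.} Since $p$ is odd, $\Aut(G)\cong(\Z/p^n\Z)^\times$ is cyclic. Moreover, $1+p$ has multiplicative order $p^{n-1}$ modulo $p^n$, so $\langle 1+p\rangle$ is the full kernel $\{u\in(\Z/p^n\Z)^\times\colon u\equiv 1 \bmod p\}$ of reduction mod $p$. Put $K_j=\langle z^{p^j}\rangle$ for $0\le j\le n$. Write a nonidentity element as $z^{p^m b}$ with $0\le m<n$ and $p\nmid b$. Then
\[O(z^{p^mb})=\{z^{p^m c}\colon c\equiv b \bmod p\}=z^{p^mb}K_{m+1}.\]
The inclusion $\subseteq$ is clear. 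For equality, note that both sets have $p^{n-m-1}$ elements: the map $u\mapsto z^{p^m b u}$ from the kernel factors through reduction mod $p^{n-m}$, and its image is the set of residues $c \bmod p^{n-m}$ with $c\equiv b \bmod p$. Hence the principal sets are $\{e\}$ together with the $p-1$ cosets $z^{p^mb}K_{m+1}$, $b=1,\dots,p-1$, for each level $0\le m\le n-1$. This gives $N=n(p-1)+1$ principal sets, with $O(x)^*=O(x^{-1})$ corresponding to $b\mapsto -b$.

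\textbf{Step 2: AC via Theorem \ref{thm:sequiv}.} Take orbits $xK_{m+1}$ and $yK_{\ell+1}$ with $x=z^{p^mb}$ and $y=z^{p^\ell b'}$. Their product is $xyK_{\min(m,\ell)+1}$. If $m>\ell$, then $xy=z^{p^\ell(b'+p^{m-\ell}b)}$ with $b'+p^{m-\ell}b\equiv b'\not\equiv 0 \bmod p$, so $xyK_{\ell+1}=O(xy)$. If $m=\ell$ and $O(x)\ne O(y)^*$, then $b+b'\not\equiv 0\bmod p$, so again $xy$ lies at level $m$ and $xyK_{m+1}=O(xy)$. Products involving $\{e\}$ are trivial. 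So $P_iP_j$ is a single principal set whenever $P_i\neq P_j^*$, and $T(G,\S)$ is AC. By Theorem \ref{thm:tanaka} it is also triply regular, so $\dim T(G,\S)=|\{(i,j,k)\colon p_{ij}^k\ne0\}|$.

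\textbf{Step 3: counting.} Pairs $(i,j)$ with $j\ne i'$ contribute exactly one $k$ each, giving $N(N-1)$. For $j=i'$ there are two cases. The pair $P_i=\{e\}$ contributes $1$. For $P_i$ at level $m$, we have $P_iP_i^*=K_{m+1}$, which is the union of $(n-m-1)(p-1)+1$ principal sets. Summing over the $p-1$ orbits at each level $m$ gives
\[(p-1)\sum_{m=0}^{n-1}\bigl((n-m-1)(p-1)+1\bigr)=(p-1)^2\tfrac{n(n-1)}{2}+n(p-1).\]
With $q=p-1$ the total is $(nq+1)nq+1+q^2\frac{n(n-1)}{2}+nq=\frac12(3n^2q^2-nq^2+4nq+2)$. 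Expanding in $p$ yields the stated formula $\frac{1}{2}(3n^2p^2-6n^2p+3n^2-np^2+6np-5n+2)$.

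The main obstacle is Step 1: establishing that the orbits are exactly the cosets $z^{p^mb}K_{m+1}$. This needs the order of $1+p$ modulo $p^n$, which is the standard lifting-the-exponent fact $(1+p)^{p^{k}}\equiv 1+p^{k+1}\bmod p^{k+2}$ for odd $p$, together with the size comparison above. Once that is in place, Steps 2 and 3 are direct computations.
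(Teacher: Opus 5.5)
Your proposal is correct and follows essentially the same route as the paper. Both arguments identify the $H$-orbits as the cosets $wK_{m+1}$ for $w$ at level $m$ and check the criterion of Theorem \ref{thm:sequiv} via $K_{m+1}K_{\ell+1}=K_{\min(m,\ell)+1}$. Both then invoke triple regularity and count $N(N-1)$ off-inverse triples plus $1+(p-1)\sum_{m}\bigl((n-m-1)(p-1)+1\bigr)$ inverse-pair triples. The only difference is presentational: you use the fact that $\langle 1+p\rangle$ is the full kernel of reduction mod $p$, and you explicitly check that $b+b'\not\equiv 0 \pmod p$ in the equal-level case. This is slightly cleaner than the paper's orbit-size count, and than its implicit handling of that case.
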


\begin{proof}
     For any $0\leq h\leq n$ we let $K_h=\langle z^{p^h}\rangle$, so 
    \[\{e\}=K_n < K_{n-1} < K_{n-2} < \cdots < K_{1} < K_0=G,\]
    and $|K_h|=p^{n-h}$ for all $h$. Clearly $\varphi_{1+p}(K_h)=K_h$. We now show that for $w\in G\setminus \{e\}$, if $K_i$ has the largest index such that $w\in K_i$, then $O(w)=wK_{i+1}$. As $w\in K_i\setminus K_{i+1}$, $w=z^{p^iu}$ for $u\in\Z$ where $p\nmid u$. Then $\varphi_{1+p}(w)=z^{(1+p)p^iu}=z^{p^iu+p^{i+1}u}=wz^{p^{i+1}u}\in wK_{i+1}$. We know $\varphi_{1+p}(K_{i+1})=K_{i+1}$, so $\varphi_{1+p}(wK_{i+1})=\varphi_{1+p}(w)\varphi_{1+p}(K_{i+1})\subseteq wK_{i+1}K_{i+1}=wK_{i+1}$. So $O(w)\subseteq wK_{i+1}$. Suppose $\varphi_{1+p}^s(w)=\varphi_{1+p}^r(w)$. Then $z^{(1+p)^sp^iu}=z^{(1+p)^rp^iu}$ and $(1+p)^sp^iu \equiv (1+p)^rp^iu\mod p^n$. As $u\in \mathcal{U}_{p^n}$, this gives $(1+p)^sp^i\equiv (1+p)^rp^i\mod p^n$. Hence, $(1+p)^s\equiv (1+p)^r \mod p^{n-i}$. Notice that $1+p$ has order $p^{n-i-1}$ in $\mathcal{C}_{p^{n-i}}^\times$. Hence, there are $p^{n-i-1}$ different values for $(1+p)^s \mod p^{n-i}$. Ranging over all of these values we have $p^{n-i-1}$ different elements in the $H-$orbit of $w$. Since this orbit is a subset of $wK_{i+1}$ and $|wK_{i+1}|=p^{n-(i+1)}$ we have $O(w)=wK_{i+1}$. 

   For $g\in G$, $O(e)O(g)=O(g)=O(g)O(e)$. Now let $z^x,z^y\in G\setminus \{e\}$. Suppose for $x,y\not\equiv 0 \mod p^n$, $O(z^x)\neq O(z^{-y})$. Let $i,j$ be the largest indices such that $z^x\in K_i$ and $z^y\in K_j$. Observe that $z^{x+y}\in K_{\min(i,j)}$. By the argument above $O(z^x)=z^xK_{i+1}$ and $O(z^y)=z^yK_{j+1}$. Since $K_h< K_m$ for $m<h$, $z^xK_{i+1} z^yK_{j+1}=z^{x+y}K_{\min(i+1,j+1)}=z^{x+y}K_{\min(i,j)+1}=O(z^{x+y})$ and $O(z^x)O(z^y)=O(z^{x+y})$. Hence, $T(G,\S)$ is AC by Theorem \ref{thm:sequiv}.    

   We now find $\dim(T(G,\S))$. By Theorem \ref{thm:tanaka}, $T(G,\S)$ is triply regular. Then $\dim T(G,\S)=|\{(i,j,k)\colon p_{ij}^k\neq 0\}|$. We let $P_0,P_1,\cdots, P_d$ be the principal sets of $\S$.

    First for $0\leq h\leq n$ we let $K_h=\langle z^{p^h}\rangle$. As we saw above, for all $w\in G\setminus \{e\}$ if $K_i$ has the largest index such that $w\in K_i$, then $O(w)=wK_{i+1}$ and $w\in K_i\setminus K_{i+1}$. Fix some $0\leq i\leq n-1$. Since $|K_i/K_{i+1}|=p$, there are $p$ cosets $wK_{i+1}$, $w\in K_i$. We have $p-1$ cosets of the form $wK_{i+1}$, $w\in K_i\setminus K_{i+1}$. Each of these is an $H-$orbit, so we have a total of $n(p-1)$ different $H-$orbits. Adding $O(e)=\{e\}$ gives $n(p-1)+1$ $H-$orbits.

    Consider $P_iP_j$ where $j\neq i'$. Suppose $P_i=O(x)$, $P_j=O(y)$, so that $O(x)\neq O(y^{-1})$. Since $T(G,\S)$ is AC we have by Theorem \ref{thm:sequiv}, $P_iP_j=O(x)O(y)=O(xy)$. Hence, for all $i,j$ with $i\neq j'$, there is a unique $k$ such that $p_{ij}^k\neq 0$. We have $n(p-1)+1$ choices for $i$ and $n(p-1)$ choices for $j\neq i'$, giving a total of $(np-n+1)(np-n)$ different nonzero $p_{ij}^k$.

    Now consider $P_iP_{i'}$. Clearly if $P_i=\{e\}$, we have $P_iP_{i'}=\{e\}$. This gives a single nonzero triple $(i,i',i)$. Next assume $P_i=O(x)\neq O(e)$. Then $P_iP_{i'}=O(x)O(x^{-1})$. Suppose $j$ is the largest index such that $x\in K_j$. Then $O(x)=xK_{j+1}$ and $O(x^{-1})=x^{-1}K_{j+1}$. Therefore, $P_iP_{i'}=xK_{j+1}x^{-1}K_{j+1}=K_{j+1}$. We now count the number of $H-$orbits of $G$ contained in $K_{j+1}$. To do so, we restrict to just considering $H-$orbits in $K_{j+1}$. We have
    \[\{e\}=K_n<K_{n-1}<K_{n-2}<\cdots K_{j+1}.\]
    For each $w\in K_{j+1}$ there is a largest index $h$ with $w\in K_{h}\setminus K_{h+1}$. Then $O(w)=wK_{h+1}$. As before we have $p-1$ cosets of the form $wK_{h+1}$ for each $j+1\leq h\leq n-1$. This gives a total of $(n-j-1)(p-1)+1$ $H-$orbits of $G$ in $K_{j+1}$, where the $1$ is added to account for $\{e\}$. Hence, we have $(n-j-1)(p-1)+1$ different $p_{ii'}^k\neq 0$ in this case.

    We note that for each $K_{j}$ there are $p-1$ different $H-$orbits $P_i$, such that $j$ is the largest index with $x\in K_j$. Then as we range over all $P_i\neq \{e\}$, for each $K_j$ there are $p-1$ of the $P_i$'s that have $(n-j-1)(p-1)+1$ nonzero $p_{ii'}^k$. This gives a total of $(p-1)\sum_{j=0}^{n-1} [(n-j-1)(p-1)+1]$, nonzero $p_{ii'}^k$. So in total we have  $(p-1)^2\frac{n(n-1)}{2}+(p-1)n+1$, triples $(i,i',k)$ such that $p_{ii'}^k\neq 0$.

    Combining we have
    \[\dim T(G,\S)=(np-n+1)(np-n)+(p-1)^2\frac{n(n-1)}{2}+(p-1)n+1\]
    \[=\frac{1}{2}(3n^2p^2-6n^2p+3n^2-np^2+6np-5n+2).\qedhere\]
\end{proof}

Let $H=\langle \varphi_{1+p} \rangle$ and assume $\S=\S(G,H)$. By Proposition \ref{prop:1+p}, $T(G,\S)$ is AC. We now show how to write the association scheme $\mathfrak{P}(\mathcal{C}_{p^n})$ corresponding to $\S$ as a wreath product. As in the proof of Proposition \ref{prop:1+p}, we set $K_h=\langle z^{p^h}\rangle$ for all $0\leq h\leq n$. We iteratively construct an ordering of $G$ as follows: We start with $\{e\}$. Then we add $z^{p^{n-1}},z^{2p^{n-1}},\cdots, z^{(p-1)p^{n-1}}$ in that order. These make up the elements of $K_{n-1}$. Then assuming we already have ordered all the elements of $K_{i+1}$, we add the elements in $z^{p^i}K_{i+1},z^{2p^i}K_{i+1},\cdots, z^{(p-1)p^i}K_{i+1}$ in that order (where the elements in $K_{i+1}$ are ordered according to the order we already had for them.) Repeating this process down to $z^{p}K_1, z^{2p}K_1,\cdots, z^{(p-1)p}K_1$, determines the order on $G$.

\begin{Prop}\label{prop:WreathJ1p}
    Let $H=\langle \varphi_{1+p} \rangle$ and $\S=\S(G,H)$. Let $P_i=z^{rp}K_1$ ($1\leq r\leq p-1$) be an $H-$orbit of $G$ outside $K_1$. Then $P_i=D\otimes J_{p^{n-1}}$, where $D$ is the adjacency matrix of $\{\varphi_{1+p}^r\}$ in $\mathcal{G}(\mathcal{C}_p)$ where $\mathcal{C}_p=\langle \varphi_{1+p}\rangle$. 
\end{Prop}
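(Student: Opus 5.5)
The plan is to compute the adjacency matrix of $P_i$ directly from the ordering of $G$ fixed just before the statement. I read the orbit in the statement as the coset $z^{r}K_1$ of $K_1$ in $G$, with $1\leq r\leq p-1$; the exponent $rp$ appears to be a slip for $r$. Indeed, by the proof of Proposition \ref{prop:1+p}, every $w\in K_0\setminus K_1$ has $O(w)=wK_1$, and these $p-1$ cosets are exactly the $H$-orbits lying outside $K_1$. Likewise, the last step of the ordering is meant to be the case $i=0$, which appends $z K_1, z^2K_1,\dots,z^{p-1}K_1$.

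\textbf{Block structure.} Unwinding the ordering, $G$ is listed as $p$ consecutive blocks
\[B_s=z^{s}K_1,\qquad s=0,1,\dots,p-1,\]
each of size $p^{n-1}$. Inside $B_s$ the elements are $z^{s}k$, where $k$ runs through $K_1$ in its fixed order. This indexes the rows and columns of any $|G|\times|G|$ matrix by pairs $(s,k)$, which is the indexing that matches a Kronecker product $X\otimes Y$ with $X$ of size $p$ and $Y$ of size $p^{n-1}$.

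\textbf{Computing the entries.} Let $A$ be the adjacency matrix of $P_i=z^{r}K_1$. Take $x=z^{s}k\in B_s$ and $y=z^{t}k'\in B_t$. Since $G$ is abelian,
\[x^{-1}y=z^{t-s}k^{-1}k' .\]
This lies in $z^{r}K_1$ exactly when $z^{t-s-r}\in K_1$, that is, when $t-s\equiv r \pmod p$. The condition does not depend on $k$ or $k'$. Hence the $(s,t)$ block of $A$ is $J_{p^{n-1}}$ if $t-s\equiv r\pmod p$ and $0$ otherwise, so $A=D\otimes J_{p^{n-1}}$, where $D$ is the $p\times p$ permutation matrix with $D_{st}=1$ if and only if $t\equiv s+r\pmod p$.

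\textbf{Identifying $D$.} It remains to match $D$ with the adjacency matrix of $\{\varphi_{1+p}^r\}$ in $\mathcal{G}(\mathcal{C}_p)$. Since $\mathcal{C}_p$ is abelian, $\Inn(\mathcal{C}_p)$ is trivial, so the principal sets of $\mathcal{G}(\mathcal{C}_p)$ are singletons. Index $\mathcal{C}_p=\langle\varphi_{1+p}\rangle$ by exponents $s\in\Z/p\Z$. Then the adjacency matrix of $\{\varphi_{1+p}^r\}$ has $(s,t)$ entry $1$ exactly when $\varphi_{1+p}^{-s}\varphi_{1+p}^{t}=\varphi_{1+p}^{r}$, i.e.\ when $t-s\equiv r\pmod p$. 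This is exactly $D$.

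Here one needs $\varphi_{1+p}$ to have order exactly $p$ as an element of a group $\mathcal{C}_p$. On $G$ it has order $p^{n-1}$, so I would take it acting on $G/K_1$, or simply as an abstract label for a generator of $\mathcal{C}_p$. The image of $z$ in $G/K_1\cong\mathcal{C}_p$ generates, giving the isomorphism $z^sK_1\mapsto \varphi_{1+p}^s$.

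The main obstacle is bookkeeping rather than mathematics. I need to confirm that the iterative ordering really lays out each coset $z^sK_1$ as one contiguous block with a common internal order, and that the intended convention for $\mathcal{G}(\mathcal{C}_p)$ matches the index shift $t-s$. Once the indexing is settled, the computation above is a direct check, parallel to Lemma \ref{lem:autJ}.
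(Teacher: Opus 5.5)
Your proposal is correct and takes the same route as the paper. The paper's proof is a block computation of the adjacency matrix of $z^{r}K_1$ with respect to the ordering of $G$ into the consecutive cosets $z^{s}K_1$, with details deferred to the analogous lemma in the authors' earlier work; your readings of $z^{rp}K_1$ as $z^{r}K_1$ and of $\langle\varphi_{1+p}\rangle$ as an abstract cyclic group of order $p$ (for example $G/K_1$) are the right repairs of the statement's slips.
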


\begin{proof}
    Using the ordering given above, the proof is similar to the proof of Lemma 4.4 in \cite{Bastiansgp}.
\end{proof}

\begin{Prop}\label{prop:WreathI1p}
    Let $H=\langle \varphi_{1+p} \rangle$ and $\S=\S(G,H)$. Let $P_i$ be the $H-$orbit of $G$ in $K_1$ containing $w$. Then $P_i=I_{p}\otimes B$, where $B$ is the adjacency matrix corresponding to the $H-$orbit of $w$ in $\mathfrak{P}(K_{1})$. 
\end{Prop}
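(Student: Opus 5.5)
Write the ordering of $G$ as consisting of $p$ consecutive blocks $K_1, z^{p^0}K_1, z^{2}K_1,\dots$; more precisely, by construction the last stage of the ordering lists $K_1$ (in its recursive order $\Lambda$), then $z^{p^0}\cdot\Lambda$? Note the construction in the paper stops at cosets of $K_1$ indexed by $z^{rp}$; I will interpret the top level as the $p$ cosets $c_0K_1,c_1K_1,\dots,c_{p-1}K_1$ of $K_1$ in $G$, where $c_0=e$ and each coset $c_rK_1$ is listed as $c_r\cdot\Lambda$, with $\Lambda$ the chosen ordering of $K_1$. The plan is then a direct entrywise comparison of matrices, as in Lemma 4.3 of \cite{Bastiansgp}, which the paper already uses for Lemma \ref{lem:autI}.

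Index rows and columns by pairs $(r,a)$ with $0\le r\le p-1$ and $a$ running over positions in $\Lambda$, so the element in position $(r,a)$ is $c_r\lambda_a$. Since $P_i\subseteq K_1$, the $((r,a),(s,b))$ entry of the adjacency matrix of $P_i$ is $1$ iff $(c_r\lambda_a)^{-1}c_s\lambda_b=\lambda_a^{-1}c_r^{-1}c_s\lambda_b\in P_i$. If $r\neq s$, then $c_r^{-1}c_s\notin K_1$, so this product is not in $K_1\supseteq P_i$ and the entry is $0$. If $r=s$, the entry is $1$ iff $\lambda_a^{-1}\lambda_b\in P_i$, which depends only on $(a,b)$. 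Thus the matrix is block diagonal with $p$ identical diagonal blocks, i.e. $I_p\otimes B'$, where $B'_{ab}=1$ iff $\lambda_a^{-1}\lambda_b\in P_i$.

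It remains to identify $B'$ with $B$, the adjacency matrix of the $H$-orbit of $w$ in $\mathfrak{P}(K_1)$. Here one needs that the $H$-orbit of $w$ computed in $G$ coincides with the orbit in the scheme on $K_1$: $K_1$ is characteristic, so $H$ restricts to $K_1$, and $\varphi_{1+p}$ restricted to $K_1\cong\mathcal{C}_{p^{n-1}}$ is $z^p\mapsto z^{p(1+p)}$, i.e. it is exactly the automorphism $\varphi_{1+p}$ of $\mathcal{C}_{p^{n-1}}=\langle z^p\rangle$. Hence the orbit partition of $K_1$ under $H$ is the one defining $\mathfrak{P}(K_1)$. (Alternatively, by the description $O(w)=wK_{i+1}$ from Proposition \ref{prop:1+p}, which reads the same inside $K_1$.) Also the ordering $\Lambda$ on $K_1$ is by construction the ordering used to define $\mathfrak{P}(K_1)$ recursively, so $B'=B$.

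The only genuine subtlety is the bookkeeping of the ordering: checking that the top-level blocks are cosets of $K_1$, each listed as a translate $c_r\cdot\Lambda$ of the same sequence, so that the diagonal blocks are literally equal rather than merely permutation-similar. Once that is checked, the rest is the routine computation above.
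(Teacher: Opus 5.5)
Your proposal is correct and follows essentially the same route as the paper, which defers to Lemma 4.3 of \cite{Bastiansgp}. That argument is the same block decomposition over the cosets of $K_1$: off-diagonal blocks vanish because $P_i\subseteq K_1$, and each diagonal block is the adjacency matrix of $O(w)$ in $\mathfrak{P}(K_1)$, since $\varphi_{1+p}$ restricts to the corresponding generator on $K_1=\langle z^p\rangle$. You were also right to read the top-level cosets as $zK_1,\dots,z^{p-1}K_1$; the paper's $z^{rp}K_1$ is a typo, since $z^{rp}\in K_1$.
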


\begin{proof}
    Using the ordering given above, the proof is similar to the proof of Lemma 4.3 in \cite{Bastiansgp}.
\end{proof}

% \begin{proof}
%     We order $G$ as described above. We can split $A_i$ into blocks based on the cosets of $K_1$. For entry $x,y$ in the $z^{ap}K_1,z^{bp}K_1$ block to be nonzero we must have $x^{-1}y\in P_i$. Since, $x\in z^{ap}K_1$, and $y\in z^{bp}K_1$, we have $x^{-1}y\in z^{bp-ap}K_1$. However, $P_i\subseteq K_1$, so if $x^{-1}y\in P_i$ then $z^{bp-ap}K_1=K_1$. This implies $z^{bp}K_1=z^{ap}K_1$. Thus, the only blocks of $A_1$ that can be nonzero are the diagonal blocks. So every non-diagonal block of $A_i$ is the all $0$ matrix. 

%     Now we consider any diagonal block $z^{ap}K_1,z^{ap}K_1$. Entry $x,y$ of this block is nonzero in $A_i$ if and only if $x^{-1}y\in P_i$. This is true if and only if $x^{-1}y$ is in the $H-$orbit of $w$ in $K_1$. Since $K_1$ is a characteristic subgroup of $G$, the $H-$orbit of $w$ in $K_1$ is the same as its $H-$orbit in $G$. We know $\mathfrak{P}(K_1)$ is formed using the $H-$orbits of $K_1$. Then $x^{-1}y\in P_i$ is identical to the condition for the $x,y$ entry of $B$ to be nonzero. Hence, the $z^{ap}K_1,z^{ap}K_1$ block of $A_i$ must be $B$. As we chose this diagonal block of $A_i$ arbitrarily, we have that every $z^{ap}K_1,z^{ap}K_1$ block of $A_i$ is $B$. Hence, $A_i=I_p\otimes B$.
% \end{proof}

\begin{Cor}\label{cor:wreath1p}
    We have $\mathfrak{P}(\mathcal{C}_{p^n})=\mathfrak{P}(\mathcal{C}_{p^{n-1}})\wr \mathcal{G}(\mathcal{C}_p)$.
\end{Cor}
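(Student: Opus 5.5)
The plan follows the proof of Corollary \ref{cor:wreathaut}: list the adjacency matrices of both schemes and match them one to one, using the ordering of $G$ built above in which the cosets $z^{rp}K_1$, $0\leq r\leq p-1$, appear consecutively, each internally ordered by the ordering already fixed on $K_1$.

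\textbf{The wreath product side.} Let $B_0,B_1,\dots,B_m$ be the adjacency matrices of $\mathfrak{P}(K_1)$. Since $K_1=\langle z^p\rangle\cong\mathcal{C}_{p^{n-1}}$ and $\varphi_{1+p}$ restricts to $K_1$ as the map $x\mapsto x^{1+p}$, this scheme is $\mathfrak{P}(\mathcal{C}_{p^{n-1}})$. Let $D_0=I_p,D_1,\dots,D_{p-1}$ be the adjacency matrices of $\mathcal{G}(\mathcal{C}_p)$. Because $\mathcal{C}_p$ is abelian, its group scheme has singleton classes. The adjacency matrices of $\mathfrak{P}(\mathcal{C}_{p^{n-1}})\wr\mathcal{G}(\mathcal{C}_p)$ are then
\[I_p\otimes B_i \ (0\leq i\leq m), \qquad D_r\otimes J_{p^{n-1}} \ (1\leq r\leq p-1).\]

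\textbf{The orbit Schur ring side.} The proof of Proposition \ref{prop:1+p} shows that every nontrivial $H$-orbit is a coset $wK_{h+1}$ with $w\in K_h\setminus K_{h+1}$. The $H$-orbits therefore fall into two types.
\begin{enumerate}[label=(\alph*)]
\item Orbits contained in $K_1$. These are exactly the $H$-orbits of $K_1$. $K_1$ is characteristic, and by Lemma \ref{lem:charac} these orbits form the orbit Schur ring on $K_1$, whose scheme is $\mathfrak{P}(K_1)$. By Proposition \ref{prop:WreathI1p}, each such orbit has adjacency matrix $I_p\otimes B_i$ for the matching $i$, and the correspondence is a bijection onto $\{B_0,\dots,B_m\}$.
\item Orbits outside $K_1$. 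These are the $p-1$ cosets $z^{rp}K_1$ with $1\leq r\leq p-1$. By Proposition \ref{prop:WreathJ1p}, the adjacency matrix of $z^{rp}K_1$ is $D\otimes J_{p^{n-1}}$, where $D$ is the adjacency matrix of $\{\varphi_{1+p}^r\}$ in $\mathcal{G}(\mathcal{C}_p)$. As $r$ runs over $1,\dots,p-1$, the element $\varphi_{1+p}^r$ runs over all nonidentity elements of $\mathcal{C}_p=\langle\varphi_{1+p}\rangle$, so these matrices are exactly $D_1\otimes J_{p^{n-1}},\dots,D_{p-1}\otimes J_{p^{n-1}}$.
\end{enumerate}

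The two lists of matrices coincide, so $\mathfrak{P}(\mathcal{C}_{p^n})=\mathfrak{P}(\mathcal{C}_{p^{n-1}})\wr\mathcal{G}(\mathcal{C}_p)$.

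\textbf{Main obstacle.} The substantive work is contained in Propositions \ref{prop:WreathJ1p} and \ref{prop:WreathI1p}. What remains is bookkeeping: checking that the identification of $\mathcal{C}_p$ with the quotient $G/K_1$ matches distinct $r$ to distinct nonidentity group elements, and that the count of orbits agrees on both sides, namely $n(p-1)+1=(n-1)(p-1)+1+(p-1)$.
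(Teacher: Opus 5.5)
Your proposal is correct and is essentially the paper's own proof: you list the wreath-product matrices $I_p\otimes B_i$ and $D_r\otimes J_{p^{n-1}}$ and match them, via Propositions \ref{prop:WreathI1p} and \ref{prop:WreathJ1p}, with the $H$-orbits inside and outside the characteristic subgroup $K_1$. The only blemish is notation you copied from the statement of Proposition \ref{prop:WreathJ1p}: since $z^{rp}\in K_1$, the orbits outside $K_1$ are the cosets $z^rK_1$ ($1\le r\le p-1$), and they correspond to the elements $(zK_1)^r$ of $G/K_1\cong\mathcal{C}_p$ (the paper's class $\{r\}$), not to powers of $\varphi_{1+p}$, which has order $p^{n-1}$ rather than $p$.
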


\begin{proof}
    If we let $B_0,B_1,\cdots, B_{(n-1)(p-1)+1}$ be the adjacency matrices of $\mathfrak{P}(\mathcal{C}_{p^{n-1}})$ and $D_0,D_1,\cdots, D_{p-1}$ be the adjacency matrices of $\mathcal{G}(\mathcal{C}_p)$, then the adjacency matrices of $\mathfrak{P}(\mathcal{C}_{p^{n-1}})\wr \mathcal{G}(\mathcal{C}_p)$ are
    \[F_i=I_p\otimes B_i\ (\text{for }0\leq i\leq (n-1)(p-1)+1),\ F_{d+j}=D_j\otimes J_{p^{n-1}}\ (\text{for }1\leq j\leq p-1).\]

    For any $H-$orbit $P_i\subseteq K_1$ of $G$, $P_i$ is an $H-$orbit of $K_1\cong \mathcal{C}_{p^{n-1}}$. By Proposition \ref{prop:WreathI1p}, $A_i=I_p\otimes B_i$ where $B_i$ is the adjacency matrix of $P_i$ in $\mathfrak{P}(\mathcal{C}_{p^{n-1}})$. From the counting of the $H-$orbits in the proof of Proposition \ref{prop:1+p}, there are $(n-1)(p-1)+1$ $H-$orbits of $G$ contained in $K_1$. This is how many adjacency matrices $\mathfrak{P}(\mathcal{C}_{p^{n-1}})$ has, so every adjacency matrix of $\mathfrak{P}(\mathcal{C}_{p^{n-1}})$ is used in forming the $A_i$.
    
    For any $H-$orbit $P_j$ of $G$ outside of $K_1$, we have $P_j=z^{rp}K_1$ and $A_j=D_t\otimes J_{p^{n-1}}$ where $D_t$ is the adjacency matrix for the class $\{r\}$ in $\mathcal{G}(\mathcal{C}_p)$. The $H-$orbits of $G$ outside of $K_1$ are $z^pK_1,z^{2p}K_1,\cdots, z^{(p-1)p}K_1$, so each adjacency matrix $D_t$ appears in exactly one of these Kronecker products.

    As every $H-$orbit of $G$ is either contained in $K_1$ or outside of $K_1$, as $K_1$ is characteristic, we have accounted for every $H-$orbit of $G$. Thus, the adjacency matrices of $\mathfrak{P}(G)$ are the same as those of $\mathfrak{P}(\mathcal{C}_{p^{n-1}})\wr \mathcal{G}(\mathcal{C}_p)$.
\end{proof}

\begin{Thm}\label{thm:wreath1p}
    Let $H=\langle \varphi_{1+p} \rangle$ and $\S=\S(G,H)$. Then \[\mathfrak{P}(\mathcal{C}_{p^n})=\mathcal{G}(\mathcal{C}_p)\wr \mathcal{G}(\mathcal{C}_p) \wr\cdots \wr \mathcal{G}(\mathcal{C}_p)\] 
    where there are $n$ copies of $\mathcal{G}(\mathcal{C}_p)$ in this wreath product. 
\end{Thm}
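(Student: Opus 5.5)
We argue by induction on $n$, modelled on the proof of Theorem \ref{thm:wreathautop}, with Corollary \ref{cor:wreath1p} as the inductive step.

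\textbf{Base case $n=1$.} Here $G=\mathcal{C}_p$ and $\varphi_{1+p}=\varphi_1$ is the identity automorphism, so $H$ is trivial. Thus $\S(G,H)$ is the group algebra $\C[\mathcal{C}_p]$, with every principal set a singleton. Its association scheme is the group scheme of the abelian group $\mathcal{C}_p$, because for an abelian group $\Inn(\mathcal{C}_p)$ is trivial, so $\S(\mathcal{C}_p,\Inn(\mathcal{C}_p))=\C[\mathcal{C}_p]$ as well. Hence $\mathfrak{P}(\mathcal{C}_p)=\mathcal{G}(\mathcal{C}_p)$, which is the wreath product with one copy.

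\textbf{Inductive step.} Assume the statement holds for $\mathcal{C}_{p^{n-1}}$, that is, $\mathfrak{P}(\mathcal{C}_{p^{n-1}})$ is an $(n-1)$-fold wreath product of copies of $\mathcal{G}(\mathcal{C}_p)$. We must check that the scheme $\mathfrak{P}(\mathcal{C}_{p^{n-1}})$ appearing in Corollary \ref{cor:wreath1p} is the same object the hypothesis describes. In the corollary it is built from the $\langle\varphi_{1+p}\rangle$-orbits on $K_1=\langle z^p\rangle\cong\mathcal{C}_{p^{n-1}}$. Under the identification $z^p\mapsto z'$, where $z'$ generates $\mathcal{C}_{p^{n-1}}$, the map $\varphi_{1+p}$ restricted to $K_1$ sends $z^{p}\mapsto z^{p(1+p)}$, which is again the automorphism $z'\mapsto z'^{1+p}$. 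So the orbit Schur ring on $K_1$ is exactly $\S(\mathcal{C}_{p^{n-1}},\langle\varphi_{1+p}\rangle)$, and the induction hypothesis applies to it. Corollary \ref{cor:wreath1p} then gives
\[\mathfrak{P}(\mathcal{C}_{p^n})=\mathfrak{P}(\mathcal{C}_{p^{n-1}})\wr\mathcal{G}(\mathcal{C}_p)=\big(\mathcal{G}(\mathcal{C}_p)\wr\cdots\wr\mathcal{G}(\mathcal{C}_p)\big)\wr\mathcal{G}(\mathcal{C}_p),\]
with $n$ copies in total.

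The only real subtlety is this identification of the restricted scheme on $K_1$ with $\mathfrak{P}(\mathcal{C}_{p^{n-1}})$. Everything else is immediate from Corollary \ref{cor:wreath1p}. Associativity of the wreath product is not needed, since the iterated product is read left-nested, consistent with Theorem \ref{thm:wreathautop}.
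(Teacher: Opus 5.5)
Your proof is correct and follows the paper's route. The paper's proof is a single line: induction using Corollary \ref{cor:wreath1p}.

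Two of your steps are left implicit in the paper, and you make them explicit. The first is the base case: $\varphi_{1+p}$ is the identity on $\mathcal{C}_p$, so $\mathfrak{P}(\mathcal{C}_p)=\mathcal{G}(\mathcal{C}_p)$. The second is the check that $\varphi_{1+p}$ restricted to $K_1=\langle z^p\rangle$ is again $z'\mapsto z'^{1+p}$, which lets the inductive hypothesis apply to the scheme on $K_1$.
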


\begin{proof}
    This result follows immediately by induction using Corollary \ref{cor:wreath1p}.
\end{proof}

We now prove that for $1<H< \Aut(G)$, with $H\neq \langle \varphi_{1+p}\rangle$, and $\S=\S(G,H)$ that $T(G,\S)$ is not AC.

\begin{Lem}\label{lem:fixed}
    Suppose $1< H< \Aut(G)$. Suppose $\S=\S(G,H)$ and $T(G,\S)$ is AC. If $H=\langle \psi\rangle$ and $\psi$ fixes $z^a\neq e$, then $a=p^xr$ for some $r\in \Z$ and $\frac{n}{2}\leq x \leq n-1$. Furthermore, $z^{p^x}$ is also fixed.
\end{Lem}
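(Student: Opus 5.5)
The plan is to read everything off the multiplier of $\psi$. Write $\psi=\varphi_k$ with $\gcd(k,p)=1$, and set $v=v_p(k-1)$, the largest integer with $k\equiv 1 \mod p^{v}$. Since $H\neq 1$ we have $k\not\equiv 1\mod p^n$, so $v\le n-1$. We also need $v\geq 1$, i.e.\ $k\equiv 1\mod p$; I expect this to follow from the hypothesis that $\psi$ fixes $z^a\neq e$, since that forces $k$ to be $1$ modulo a positive power of $p$ (see the next step).

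\textbf{Fixed points and the second claim.} Write $a=p^xr$ with $p\nmid r$ and $0\le x\le n-1$. Then $\psi(z^a)=z^a$ means $p^xr(k-1)\equiv 0\mod p^n$. Since $r$ is a unit, this is equivalent to $k\equiv 1\mod p^{n-x}$, i.e.\ $v\ge n-x$. This condition depends only on $x$, so $z^{p^x}$ is fixed as well, which proves the ``furthermore'' part. In particular $x\ge n-v$, the fixed points of $\psi$ form exactly the subgroup $K_{n-v}=\langle z^{p^{n-v}}\rangle$, and $v\geq 1$ because $x\le n-1$.

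\textbf{Bounding $x$: the orbit of $z^{-1}$.} It remains to show $x\ge n/2$. By the previous step it suffices to treat the smallest possible exponent $x_0=n-v$, so we must show $v\le n/2$. Suppose instead that $v>n/2$, so $x_0=n-v<v$. Every power $k^j$ satisfies $k^j\equiv 1\mod p^{v}$, hence $O(z^{-1})\subseteq z^{-1}K_v$, where $K_v=\langle z^{p^v}\rangle$. Also $O(z)$ contains $z^k\neq z$.

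\textbf{Applying Theorem \ref{thm:sequiv}.} Put $f=z^{p^{x_0}}$, which is fixed by $H$, and $w=z^{-1}f$. Because $x_0<v$, we have $f\notin K_v$, so $w\notin z^{-1}K_v\supseteq O(z^{-1})$. Therefore $O(w)\neq O(z^{-1})=O(z)^*$, and Theorem \ref{thm:sequiv} forces
\[O(z)O(w)=O(zw)=O(f)=\{f\}.\]
However, $O(z)O(w)$ also contains $\psi(z)\,w=z^{k-1}f$, which differs from $f$ because $k\not\equiv 1\mod p^n$. This contradiction shows $v\le n/2$, hence $x\ge x_0=n-v\ge n/2$.

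The main obstacle is choosing the right pair of orbits for Theorem \ref{thm:sequiv}: their product must land in a singleton orbit, yet they must not be inverse to each other. The threshold $n/2$ enters only through the inequality $x_0<v$, which is what separates $w$ from $O(z^{-1})$.
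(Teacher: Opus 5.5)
Your proof is correct. Its first move is the same as the paper's: both apply Theorem \ref{thm:sequiv} to a pair of orbits whose product would have to be a fixed singleton, and the element $\psi(z)\cdot w$ rules this out unless the second orbit is $O(z^{-1})$. The paper phrases this as $z^{-b}\in O(z^{ca-b})$ for all units $b$ and all $c$; your choice $w=z^{-1}f$ is this device with $b=1$ and the fixed element $f$ in place of $z^{ca}$.

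Where you differ is in turning this into the bound $x\ge n/2$. The paper works only with the given fixed exponent $a$. It takes two instances ($c=1$ and $c=2$) to put $z^{a-1}$ and $z^{2a-1}$ in the orbit of $z^{-1}$, and a short congruence computation then gives $a^2\equiv 0 \pmod{p^n}$. You instead introduce $v=v_p(k-1)$. This identifies the fixed subgroup exactly as $K_{n-v}$, which makes the ``furthermore'' part and $v\ge 1$ immediate. It also gives the containment $O(z^{-1})\subseteq z^{-1}K_v$, so a single application of Theorem \ref{thm:sequiv} yields $v\le n/2$.

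Your route makes the structure explicit: the fixed points are $K_{n-v}$, and the conclusion becomes a clean condition $v_p(k-1)\le n/2$ on the generator. The paper's route avoids valuations and needs no preliminary analysis of the fixed points.

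You could also skip the detour through the minimal exponent $x_0$. Running your argument with $f=z^a$ itself forces $z^{a-1}\in O(z^{-1})\subseteq z^{-1}K_v$, hence $x\ge v$. Together with $x\ge n-v$, this gives $2x\ge n$ directly.
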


\begin{proof}
Suppose $\psi(z)=z^k$ and let $z^a\neq 1$, where $\psi(z^a)=z^{ak}=z^a$. So $ak\equiv a \mod p^n$. Let $b,c\in \Z$ with $\gcd(p,b)=1$. If $z^{-b}\not\in O(z^{ca-b})$, then, as $T(G,\S)$ is AC, we have $O(z^b)O(z^{ca-b})=O(z^{ca})=\{z^{ca}\}$. Now $z^{kb+ca-b}\in O(z^b)O(z^{ca-b})$, so $z^{kb+ca-b}=z^{ca}$, and $kb\equiv b \mod p^n$. As $\gcd(b,p)=1$, we have $k\equiv 1 \mod p^n$, a contradiction. Therefore, $z^{-b}\in O(z^{ca-b})$ for all $b$ with $\gcd(p,b)=1$ and $c\in \Z$. 

In particular, $z^{-1}\in O(z^{a-1})$, $z^{-1}\in O(z^{2a-1})$, and so $O(z^{a-1})=O(z^{2a-1})$. As $z^{-1}\in O(z^{a-1})$, we have $z^{-k^\ell}=z^{a-1}$ for some $\ell\in \Z$. So $-k^\ell\equiv a-1 \mod p^n$, $0\leq \ell< |H|$. So $k^\ell\equiv 1-a \mod p^n$, and there is $0\leq w<|H|$ such that $z^{(a-1)k^w}=z^{2a-1}$ and $(a-1)k^w\equiv 2a-1 \mod p^n$. However, $ak\equiv a \mod p^n$, so $ak^w\equiv a \mod p^n$. Hence, $2a-1\equiv a-k^w\mod p^n$ and $k^w\equiv 1-a \equiv k^\ell \mod p^n$, so $k^{\ell-w}\equiv 1 \mod p^n$. Then $|H|\mid (\ell-w)$. So $\ell=w+h|H|$, $h\in \Z$, and $\ell=w$. So 
\[2a-1\equiv (a-1)k^\ell \equiv(a-1)(1-a)\equiv -a^2+2a-1 \mod p^n,\]
and $a^2\equiv 0 \mod p^n$. Therefore, $a=p^xr$, $\gcd(p,r)=1$ for some $r\in \Z$ and $\frac{n}{2}\leq x\leq n-1$. Note that $r\in\Z_{p^n}^\times$, so $z^{p^x}$ is also fixed by $\psi$.
\end{proof}

\begin{Thm}\label{thm:fixed}
    Suppose $1<H< \Aut(G)$, $\S=\S(G,H)$, and $T(G,\S)$ is AC. If $H=\langle \psi\rangle$ and $\psi$ fixes a non-identity element of $G$, then $H=\langle \varphi_{1+p}\rangle$.
\end{Thm}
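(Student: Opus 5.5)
The plan is to first use Lemma \ref{lem:fixed} to force $H$ into the Sylow $p$-subgroup of $\Aut(G)$. I will then show directly, using Theorem \ref{thm:sequiv}, that any proper subgroup of that Sylow subgroup fails the AC criterion.

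\textbf{Step 1 (reduce to $k\equiv 1 \bmod p$).} Write $\psi=\varphi_k$. By Lemma \ref{lem:fixed}, $\psi$ fixes $z^{p^x}$ for some $x$ with $\frac n2\le x\le n-1$. Then $p^xk\equiv p^x \bmod p^n$, so $k\equiv 1\bmod p^{n-x}$. Since $n-x\ge 1$, we get $k\equiv 1\bmod p$. Because $p$ is odd, $\Aut(G)\cong(\Z/p^n\Z)^\times$ is cyclic. The elements $\varphi_k$ with $k\equiv 1\bmod p$ form its unique subgroup of order $p^{n-1}$, which is $\langle\varphi_{1+p}\rangle$, since $1+p$ has order $p^{n-1}$ mod $p^n$. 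Hence $H=\langle \varphi_{(1+p)^{p^s}}\rangle$ for some $0\le s\le n-2$; the bound $s\le n-2$ holds because $H\neq 1$. It remains to rule out $s\ge 1$.

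\textbf{Step 2 (describe the orbits).} The residues $(1+p)^{p^s}$ and their powers mod $p^n$ are exactly the units $\equiv 1 \bmod p^{s+1}$. As in the proof of Proposition \ref{prop:1+p}, with $K_h=\langle z^{p^h}\rangle$, this gives
\[O(z^u)=z^uK_{s+1}\quad\text{for } p\nmid u,\qquad O(z^{p}v)=z^{pv}K_{s+2}\quad\text{for } p\nmid v.\]
Here $K_h=\{e\}$ once $h\ge n$. The counting argument is the same as in that proof: the orbit of $z^{p^iu}$ is $z^{p^iu}K_{i+s+1}$.

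\textbf{Step 3 (contradiction when $s\ge 1$).} Consider $O(z)$ and $O(z^{p-1})$. Since $p$ is odd, $p-1$ is a unit, so $O(z^{p-1})=z^{p-1}K_{s+1}$. We have $O(z)^*=z^{-1}K_{s+1}$. This differs from $z^{p-1}K_{s+1}$ because $z^{p}\notin K_{s+1}$ when $s\ge1$. So Theorem \ref{thm:sequiv} requires $O(z)O(z^{p-1})$ to be a single orbit. But
\[O(z)O(z^{p-1})=z^pK_{s+1},\]
while the orbit containing $z^p$ is $z^pK_{s+2}$. Since $s+1\le n-1$, the inclusion $K_{s+2}<K_{s+1}$ is strict, so the product is strictly larger than an orbit. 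This contradicts AC. Therefore $s=0$ and $H=\langle\varphi_{1+p}\rangle$.

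The only delicate point is the orbit description in Step 2, namely that the group generated by $(1+p)^{p^s}$ mod $p^n$ is exactly $1+p^{s+1}\Z$ mod $p^n$. This follows from the standard fact that $(1+p)^{p^s}=1+p^{s+1}u$ with $p\nmid u$ for odd $p$, which is the same fact used implicitly in Proposition \ref{prop:1+p}.
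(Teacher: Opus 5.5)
Your proof is correct, but it follows a different route from the paper.

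\textbf{The paper's route.} The paper inducts on $n$. It restricts $H$ to the characteristic subgroup of index $p$ and uses Lemma \ref{lem:charac} with the inductive hypothesis to get $k\equiv(1+p)^u \pmod{p^{n-1}}$ with $p\nmid u$. A binomial expansion then shows that $\psi$ has order exactly $p^{n-1}$, so $H$ is the unique subgroup of that order.

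\textbf{Your route.} You note at once that a nonidentity fixed point forces $k\equiv 1\pmod p$. This places $H$ inside the cyclic Sylow $p$-subgroup $\langle\varphi_{1+p}\rangle$, so $H=\langle\varphi_{1+p}^{p^s}\rangle$. The explicit coset description $O(z^{p^iu})=z^{p^iu}K_{i+s+1}$ then gives a single witness pair, $O(z)$ and $O(z^{p-1})$, that violates Theorem \ref{thm:sequiv} whenever $s\ge 1$.

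\textbf{Comparison.} Your version is shorter. It avoids the induction, where one must also check that $H|_K$ again satisfies the hypotheses, and it avoids the binomial computations. It also needs less than you cite. The bound $x\ge n/2$ from Lemma \ref{lem:fixed} plays no role, since any fixed $z^{p^xr}\neq e$ with $p\nmid r$ already gives $k\equiv 1\pmod{p^{n-x}}$. Once $k\equiv 1\pmod p$, the hypotheses that $H$ is cyclic and proper hold automatically. Moreover, Steps 2--3 only use $H\le\langle\varphi_{1+p}\rangle$. Lemma \ref{lem:Horder} already gives this containment, so together they would yield Theorem \ref{thm:primepower} directly, without Lemma \ref{lem:fixed}. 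The paper's argument stays within the restriction-to-characteristic-subgroups framework that it reuses in Section 4, at the cost of a more computational inductive step.

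A small slip: $O(z^{p}v)$ in Step 2 should read $O(z^{pv})$.
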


\begin{proof}
Now $\psi(z)=z^k$ for some $k$. By Lemma \ref{lem:fixed}, $z^{p^x}$ is fixed for some $\frac{n}{2}\leq x\leq n-1$.

We induct on $n\geq 1$. If $n=1$, then $\frac{1}{2}\leq x\leq 0$. This is impossible, so there is no proper non-trivial subgroup of $\Aut(\mathcal{C}_p)$ that fixes a non-identity element and the result is vacuously true. If $n=2$, then $1\leq x\leq 1$, so $z^p$ is fixed by $\psi$. Then $pk\equiv p \mod p^2$, and $k-1\equiv 0 \mod p$. So $k=1+up$ for some $u\in \Z$. Note if $p\mid u$, then $k\equiv 1 \mod p^2$, which is false. So $p\nmid u$. Observe that $k^\ell \equiv 1+\ell u p \mod p^2$. Then $k^\ell \equiv 1 \mod p^2$ if and only if $p\mid \ell$, so the order of $k$ is $p$. Thus, $H$ is the unique subgroup of $\Aut(G)$ with order $p$. However, $\varphi_{1+p}(z)=z^{1+p}$ is also an element of order $p$. So $H=\langle \varphi_{1+p}\rangle$ and $|H|=p$.

Now assume that for $m\geq 2$, the only proper, nontrivial subgroup of $\Aut(\mathcal{C}_{p^m})$ such that $T(G,\S)$ is AC is generated by the map $\varphi_{1+p}$ and the subgroup has order $p^{m-1}$. We consider $1<H< \Aut(\mathcal{C}_{p^{m+1}})$, $H=\langle \psi\rangle$ with $\psi(z)=z^k$. Assume $\S=(\mathcal{C}_{p^{m+1}},H)$ and $T(\mathcal{C}_{p^{m+1}}, \S)$ is AC. 

Suppose $z^a$ is a non-identity element of $G$ of maximal order that is fixed by $\psi$. By Lemma \ref{lem:fixed}, $a=p^xr$ for some $r\in \Z$, $\gcd(p,r)=1$, and $\frac{m+1}{2}\leq x\leq m$. Furthermore, $z^{p^x}$ is fixed by $\psi$ for some $\frac{m+1}{2}\leq x\leq m$. Also $|z^{p^x}|=p^{m+1-x}$. 

Let $K$ be the subgroup of $\mathcal{C}_{p^{m+1}}$ of order $p^{m}$. Then $K$ is characteristic, so by Lemma \ref{lem:charac}, $T(K, \S_K)$ is AC, where $\S_K$ is the $H-$orbit Schur ring of $K$. We further note as $m+1-x<m$ since $x\geq \frac{m+1}{2}\geq \frac{2+1}{2}>1$, that $z^{p^x}\in K$. Also $z^{p^x}$ is not a generator of $K$, having order less than $m$. Therefore, when acting on $K$, $\psi$ fixes $z^x$. Furthermore, $\psi\neq {id}_k$ since the element of maximal order fixed by $\psi$ has order less than $p^m$. Since $\Aut(\mathcal{C}_{p^{m}})$ has no fixed points, $1<H<\Aut(\mathcal{C}_{p^m})$ and $T(K,\S_K)$ is AC. Then by induction, when $H$ is restricted to $\mathcal{C}_{p^m}$, it is generated by the map $y\mapsto y^{1+p}$ and has order $p^{m-1}$. Then as $\psi(z)=z^k$, we have $k\equiv(1+p)^u \mod p^m$ for some $u\in \Z$. So $k=(1+p)^u +vp^m$ for $u,v\in \Z$. Now $\psi$ is a generator of $H$ restricted to $K$. So $\psi=\varphi_{1+p}^u$ in $\mathcal{C}_{p^m}$. Since $H$ restricted to $\mathcal{C}_{p^m}$ has order $p^{m-1}$, $m\geq 2$ and $\psi$ generates it we must have $\gcd(u,p^{m-1})=1$. That is, $p\nmid u$. Suppose the order of $\psi$ is $\ell$. We have
\[k^\ell \equiv((1+p)^u+vp^m)^\ell\equiv \left(1+up+\binom{u}{2}p^2 +\cdots +p^u +vp^m\right)^\ell\mod p^{m+1}\]
\[\equiv 1+\ell u p+\ell p^2\left(\binom{u}{2}+\binom{\ell}{2}u^2 \right)+\cdots +\ell v p^m \mod p^{m+1}.\]
Then $k^\ell \equiv 1 \mod p^{m+1}$, and so $k^\ell-1\equiv 0 \mod p^{m+1}$. Thus,
\[\ell u p+\ell p^2\left(\binom{u}{2}+\binom{\ell}{2}u^2 \right)+\cdots +\ell v p^m \equiv 0 \mod p^{m+1}.\]
As $p$ divides every term in this sum, we have
\begin{equation}\label{eq:a.b}
    \ell u+\ell p\left(\binom{u}{2}+\binom{\ell}{2}u^2 \right)+\cdots +\ell v p^{m-1}\equiv 0 \mod p^{m}.
\end{equation}
Next $\psi\in \Aut(\mathcal{C}_{p^{m+1}})$ and as $\ell$ must divide $(p-1)p^m$ we have $\ell=bp^c$ for some $0\leq c\leq m$ and $b\mid p-1$. However, the order of $\psi$ when restricted to $\mathcal{C}_{p^m}$ is $p^{m-1}$, so $c\geq m-1$. Thus, Equation (\ref{eq:a.b}) implies
\[bu+b p\left(\binom{u}{2}+\binom{\ell}{2}u^2 \right)+\cdots +b v p^{m-1}\equiv 0 \mod p^{m-c}.\]
Thus,
\[u+p\left(\binom{u}{2}+\binom{\ell}{2}u^2 \right)+\cdots +v p^{m-1}\equiv 0 \mod p^{m-c}.\]
However, $m-c$ is either $0$ or $1$. If $m-c=1$, we have $u\equiv 0 \mod p$. This is false since $p\nmid u$. So $m-c=0$. This implies $\ell=bp^m$ for some $b\mid p-1$. However, $\ell$ is the order of $\psi$. Notice that
\[k^{p^m}\equiv 1+p^m u p+p^m p^2\left(\binom{u}{2}+\binom{p^m}{2}u^2 \right)+\cdots +p^m v p^m\equiv 1 \mod p^{m+1}.\]
Thus, the order of $\psi$ is $p^m$. Then $H$ is the unique subgroup of $\Aut(\mathcal{C}_{p^{m+1}})$ of order $p^m$. Note that $\varphi_{1+p}(z)=z^{1+p}$ has order $p^m$ in $\Aut(\mathcal{C}_{p^{m+1}})$ since $z^{1+p}$ has order $p^m$ in $(\mathcal{C}_{p^{m+1}})^*\cong \Aut(\mathcal{C}_{p^{m+1}})$, so $H=\langle \varphi_{1+p}\rangle$. This completes the inductive step. 
\end{proof}

\begin{Lem}\label{lem:Horder}
     Let $H\lneq \Aut(G)$, with $H=\langle \varphi_k\rangle$. Suppose that $\S=\S(G,H)$ and $T(G,\S)$ is AC. Then $|H|$ is a power of $p$.
\end{Lem}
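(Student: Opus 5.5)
Since $G=\mathcal{C}_{p^n}$ with $p$ odd, $\Aut(G)\cong \mathcal{C}_{(p-1)p^{n-1}}$ is cyclic. Hence $H=\langle\varphi_k\rangle$ decomposes as $H=H_{p'}\times H_p$, where $H_{p'}$ has order $b\mid p-1$ and $H_p$ is a $p$-group. We must show $b=1$. Suppose instead that $b>1$. Then $H$ contains a unique subgroup of order $b$, generated by some $\varphi_t$ with $t$ of order $b$ modulo $p^n$. Since $b\mid p-1$ and $b>1$, reduction modulo $p$ is injective on the $p'$-part of $\mathcal{U}_{p^n}$, so $t\not\equiv 1\pmod p$. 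The plan is to exhibit principal sets $P_i,P_j$ with $P_i\neq P_j^*$ for which $P_iP_j$ is not a single orbit. This contradicts Theorem \ref{thm:sequiv}.

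The natural choice is to compare $O(z)O(z)$ with $O(z^2)$. The case $O(z)=O(z)^*$, i.e.\ $-1\in\langle k\rangle$, has to be handled as well. So the first step is a case split.

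\textbf{Case A: $-1\notin\langle k\rangle$.} Then $O(z)\neq O(z)^*$, and AC forces $O(z)O(z)=O(z^2)=\{z^{2k^r}\}$. Take the element $z^{1+t}\in O(z)O(z)$. It must satisfy $1+t\equiv 2k^r\pmod{p^n}$ for some $r$. Reduce modulo $p$ and project onto the $p'$-part. The image of $H$ in $(\mathbb{Z}/p)^\times$ is the cyclic subgroup $\langle \bar t\rangle$ of order $b$, so $1+\bar t=2\bar s$ with $\bar s\in\langle\bar t\rangle$. Write $\bar s=\bar t^{\,j}$, i.e.\ $1+\bar t=2\bar t^{\,j}$ with $\bar t^{\,b}=1$ and $\bar t\neq 1$. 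I would rule this out by a size or absolute-value argument in $\mathbb{C}$ after lifting $\bar t$ to a primitive $b$-th root of unity $\zeta$: the equation $1+\zeta=2\zeta^j$ is impossible since $|1+\zeta|<2$. However, this lifting is not legitimate modulo $p$. So the better approach uses more elements: all of $z^{1+t^i}$ lie in $O(z^2)$, and so do the elements $z^{t^a+t^c}$.

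A cleaner route follows the spirit of Lemma \ref{lem:fixed}. Compare $|O(z)O(z)|$, which must equal $|O(z^2)|=|H|$, with a count. The set $\{u+v: u,v\in\langle k\rangle\}$ contains the $|H|$ elements $1+k^i$. For these to lie in $2\langle k\rangle$ we need $\tfrac{1+k^i}{2}\in\langle k\rangle$ for every $i$. The map $x\mapsto \tfrac{1+x}{2}$ would then send $\langle k\rangle\setminus\{-1\}$ into $\langle k\rangle$. This map is affine with fixed point $1$, and iterating it produces the subgroup closed under averages. Reducing modulo $p$, the subgroup $\langle\bar t\rangle\subseteq\mathbb{F}_p^\times$ is closed under $x\mapsto (1+x)/2$. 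Iterating gives $(1+x)/2,(3+x)/4,\dots$, i.e.\ $1+(x-1)2^{-m}$, which are all distinct as $m$ ranges over the order of $2$, or else they generate enough elements to force $\langle\bar t\rangle$ to contain the whole affine line through $1$. In either case one shows $|\langle\bar t\rangle|$ is too large, so $\langle \bar t\rangle=\mathbb{F}_p^\times$. Lifting back, $H$ would then have $p'$-part of order $p-1$, and combined with the condition on the $p$-part this forces $H=\Aut(G)$ or a contradiction.

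\textbf{Case B: $-1\in\langle k\rangle$.} Here I would instead use $O(z)O(z^c)$ for a suitable $c$ with $z^c\notin O(z)$, mimicking Case 3 of Theorem \ref{thm:2case'}. One can take $c$ to be a unit outside $\langle k\rangle$, which exists because $H\neq\Aut(G)$. This yields $t^a+c\,t^{a'}\in (1+c)\langle k\rangle$ for all $a,a'$, and the same averaging/closure argument modulo $p$ applies.

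The main obstacle is the modular closure step: showing that a proper nontrivial subgroup of $\mathbb{F}_p^\times$ of order prime to $p$ cannot be closed under the relevant affine maps. I would first try to prove this directly via the fact that $\{1+x : x\in C\}\subseteq \lambda C$ for a multiplicative subgroup $C\neq\{1\}$ forces $C\cup\{0\}$ to be closed under addition, making it a subfield of $\mathbb{F}_p$ and hence all of $\mathbb{F}_p$. The remaining care is in transferring this from $\mathbb{F}_p$ back to $\mathbb{Z}/p^n$, where the contradiction with $H\lneq \Aut(G)$ must come from the $p$-part interacting with Theorem \ref{thm:fixed}.
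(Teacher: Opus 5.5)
Your reduction is sound up to the point where you get, in $\mathbb{F}_p$, an inclusion $1+\gamma C\subseteq(1+\gamma)C$. Here $C=\langle\bar t\rangle$ is the image of $H$, of order $b>1$, with $\gamma=1$ in Case A and $\gamma=\bar c$ in Case B. The gap is that you never turn this inclusion into a contradiction.

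\begin{itemize}
\item The iteration $x\mapsto 1+(x-1)2^{-m}$ only shows that $C-1$ is stable under multiplication by powers of $2$. That does not force $C=\mathbb{F}_p^\times$.
\item The ``subfield'' step is unjustified. From $1+C\subseteq\lambda C$ you get $x+y=x(1+y/x)\in\lambda C$, i.e.\ $C+C\subseteq\lambda C$. This says nothing about $C\cup\{0\}$ being closed under addition unless $\lambda\in C$.
\item Most importantly, you treat $C=\mathbb{F}_p^\times$ as a surviving case, to be excluded by ``the $p$-part interacting with Theorem~\ref{thm:fixed}''. That theorem cannot be used here. If $b>1$ then $k\not\equiv 1\pmod p$, so $k-1$ is a unit and $\varphi_k$ fixes no non-identity element of $G$.
\end{itemize}

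In the paper the logic runs the other way. Lemma~\ref{lem:Horder} is what places $\varphi_k$ in $\langle\varphi_{1+p}\rangle$, where it fixes $z^{p^{n-1}}$, so that Theorem~\ref{thm:fixed} can then be applied in Theorem~\ref{thm:primepower}. As written, nothing in your proposal rules out $b>1$.

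The missing idea is a short count in $\mathbb{F}_p$.
\begin{itemize}
\item If $1+\gamma=0$, the right side is $\{0\}$ while the left side has $b>1$ elements, a contradiction.
\item Otherwise both sides are $b$-element subsets of $\mathbb{F}_p$, so the inclusion is an equality. Summing gives $b+\gamma\sum_{y\in C}y=(1+\gamma)\sum_{y\in C}y$.
\item For a nontrivial subgroup, $\sum_{y\in C}y=(\bar t^{\,b}-1)/(\bar t-1)=0$. Hence $b\equiv 0\pmod p$, contradicting $1<b\le p-1$.
\end{itemize}
This handles both of your cases at once, so the split on whether $-1\in\langle k\rangle$ is unnecessary. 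Any unit $z^c\notin O(z^{-1})$ will do, and one exists because $|H|<|\mathcal{U}_{p^n}|$.

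The paper's argument is more direct and never reduces mod $p$. Take such a unit $z^a$. Both $z^aO(z)$ and $z^{ka}O(z)$ are $|H|$-element subsets of $O(z)O(z^a)=O(z^{1+a})$, so they are equal. Comparing the products of their elements gives $p^n\mid |H|a(k-1)$. Since $a$ is a unit and $p^n\nmid |H|$, this forces $p\mid k-1$, so the order of $k$ is a power of $p$.
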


\begin{proof}
    If $\mathcal{U}_{p^n}=O(z^{-1})$, then $|O(z^{-1})|=|\mathcal{U}_{p^n}|=p^{n-1}(p-1)=|\Aut(G)|$. However, $|O(z^{-1})|\leq |H|<|\Aut(G)|$. So there is $z^a\in \mathcal{U}_{p^n}\setminus O(z^{-1})$ and $O(z^a)\neq O(z^{-1})$. Then $O(z)O(z^a)=O(z^{1+a})$. Let
    \[R_1=\{z^{a+k^i}\colon 0\leq i<|H|\}; \quad R_2=\{z^{ka+k^i}\colon 0\leq i<|H|\}.\]
    Then $|R_1|=|R_2|=|H|$. Since $|O(z^{1+a})|\leq |H|$ and $R_1,R_2\subseteq O(z^{1+a})$, we have $R_1=R_2$. Multiplying the elements in $R_1$ and $R_2$ together we get $z^{\sum_{i=0}^{|H|-1}k^i+|H|a}=z^{\sum_{i=0}^{|H|-1}k^i + |H|(ka)}$. This implies, $|H|a\equiv |H|(ka) \mod p^n$, so that $|H|a(k-1)\equiv 0 \mod p^n$
    Thus, $|H|(k-1)\equiv 0 \mod p^n,$ and we write $|H|=p^rs$, where $s\mid (p-1)$, and $0\leq r\leq n-1$. If $\gcd(p,k-1)=1$, then $|H|(k-1)\not\equiv 0 \mod p^n$. Thus, $k=1+p^tu$, with $t\geq 1$, $\gcd(p,u)=1$, $t+r\geq n$, so $r\geq n-t\geq 1$.

    Since $k=1+p^tu$, $\gcd(p,u)=1$, $t\geq 1$, we have $k^b\equiv (1+p^tu)^b\equiv 1+bp^tu \mod p^{t+1}$ for any $b\geq 1$, so $k^p\equiv 1+p^{t+1}u \mod p^{t+1}$. So $k^p=1+p^{t+1}u_1$ for some $u_1\in \Z$. Now $k^{p^2}=(1+p^{t+1}u_1)^p\equiv 1+p^{t+2}u_1\mod p^{t+2}$ and by induction $k^{p^w}\equiv 1+p^{t+w}u_{w-1}\mod p^{t+w}$. Picking $w=n-t$ we find $k^{p^w}\equiv 1+p^{n}u_{n-t-1}\mod p^n$. Thus the order of $k$ in $\Z_{p^n}^\times$ must divide $p^w$. That is, $|H|\mid p^w$.
\end{proof}

\begin{Thm}\label{thm:primepower}
    Suppose $H\leq \Aut(G)$ and $\S=\S(G,H)$. Then $T(G,\S)$ is AC if and only if $H$ is one of the following subgroups of $\Aut(G)$
    \begin{itemize}
        \item The identity subgroup;
        \item $\langle \varphi_{1+p}\rangle$;
        \item $\Aut(G)$.
    \end{itemize}
\end{Thm}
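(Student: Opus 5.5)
The plan is to assemble the three sufficiency results already in hand and to rule out every other subgroup using Lemma \ref{lem:Horder} and Theorem \ref{thm:fixed}. Here $p$ is odd and $\Aut(G)\cong \Z_{p^n}^\times$ is cyclic, so every $H\leq \Aut(G)$ has the form $H=\langle \varphi_k\rangle$.

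\textbf{Sufficiency.} The trivial subgroup gives the group algebra, which is AC. The subgroup $\langle\varphi_{1+p}\rangle$ is AC by Proposition \ref{prop:1+p}, and $\Aut(G)$ is AC by Theorem \ref{thm:2nauto}.

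\textbf{Necessity.} Suppose $1<H<\Aut(G)$, write $H=\langle\psi\rangle$ with $\psi=\varphi_k$, and assume $T(G,\S)$ is AC. If $n=1$, then Lemma \ref{lem:Horder} forces $|H|$ to be a power of $p$ dividing $p-1$. Hence $|H|=1$, a contradiction, so no proper nontrivial subgroup works.

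For $n\geq 2$, Lemma \ref{lem:Horder} gives $|H|=p^r$ with $r\geq 1$. The aim is to show that $\psi$ fixes a non-identity element, so that Theorem \ref{thm:fixed} yields $H=\langle\varphi_{1+p}\rangle$.

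To find such a fixed point, note that $H$ lies in the Sylow $p$-subgroup of $\Z_{p^n}^\times$, which is $\{k: k\equiv 1 \bmod p\}=\langle 1+p\rangle$. So $k\equiv 1\pmod p$, and therefore $\psi(z^{p^{n-1}})=z^{kp^{n-1}}=z^{p^{n-1}}$. Thus $\psi$ fixes the non-identity element $z^{p^{n-1}}$, and Theorem \ref{thm:fixed} applies.

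Alternatively, one can read the argument off the proof of Lemma \ref{lem:Horder} directly: there $k=1+p^tu$ with $t\geq 1$, which already gives $k\equiv 1 \bmod p$.

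\textbf{Main obstacle.} The only delicate point is making sure the hypotheses of Theorem \ref{thm:fixed} and Lemma \ref{lem:Horder} are met: $H$ must be proper, nontrivial and cyclic. These all hold here because $\Aut(G)$ is cyclic for odd $p$. Everything else is a direct citation.
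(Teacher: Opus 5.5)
Your proposal is correct and follows the paper's proof essentially step for step. Sufficiency comes from the group-ring case, Proposition \ref{prop:1+p} and Theorem \ref{thm:2nauto}; necessity uses Lemma \ref{lem:Horder} to put $H$ inside the Sylow $p$-subgroup $\langle\varphi_{1+p}\rangle$, notes that $z^{p^{n-1}}$ is then fixed, and applies Theorem \ref{thm:fixed}. Your separate treatment of $n=1$, where $|H|$ would be a $p$-power dividing $p-1$, just makes explicit what the paper leaves implicit in the range $0<r\leq n-1$.
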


\begin{proof}
    First, if $H=\{\varphi_1\}$, then $\S$ is the group ring and $T(G,\S)$ is AC. By Proposition \ref{prop:1+p}, if $H=\langle \varphi_{1+p}\rangle$ then $T(G,\S)$ is AC, and if $H=\Aut(G)$, then by Theorem \ref{thm:2nauto}, $T(G,\S)$ is AC.

    Now assume $T(G,\S)$ is AC and that $1<H<\Aut(G)$. Then by Lemma \ref{lem:Horder}, $|H|=p^r$, $0< r\leq n-1$, so $H$ is a subgroup of the unique subgroup $K=\langle \varphi_{1+p}\rangle$ of $\Aut(G)$ of order $p^{n-1}$ and $\varphi_k=\varphi_{1+p}^\ell$ for some $\ell\in \Z$. We note $\varphi_{1+p}(z^{p^{n-1}})=z^{p^{n-1}(1+p)}=z^{p^{n-1}}$. Hence, $\varphi_k$ fixes a non-identity element of $G$ and $T(G,\S)$ is AC. So by Theorem \ref{thm:fixed} $H=\langle \varphi_{1+p}\rangle$.
\end{proof}

\section{Cyclic Groups That Are Not $p-$Groups}

Here we show that for $1\lneq H\leq\Aut(\mathcal{C}_n)$, where $n$ is not a prime power, $T(G,\S(\mathcal{C}_n,H))$ is not AC.

\begin{Lem}\label{lem:bigclass}
Let $G=\mathcal{C}_n=\langle z\rangle$ where distinct primes $p$ and $q$ divide $n$. Let $H\leq \Aut(G)$ be nontrivial. Then there is some $H-$orbit $C\subseteq G\setminus \mathcal{U}_n$ such that $|C|>1$.
\end{Lem}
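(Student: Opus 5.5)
The plan is to exhibit two specific non-generators of $G$ and show that a nontrivial automorphism cannot fix both of them. The natural candidates are $z^p$ and $z^q$. Since $p\mid n$ and $q\mid n$, neither lies in $\mathcal{U}_n$: their orders are $n/p<n$ and $n/q<n$.

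First I would record that every $H-$orbit of a non-generator lies in $G\setminus \mathcal{U}_n$. Automorphisms preserve element orders, and $\mathcal{U}_n$ is exactly the set of elements of order $n$. So $O(z^p)$ and $O(z^q)$ are both contained in $G\setminus\mathcal{U}_n$, and it suffices to show that at least one of them has size greater than $1$.

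Next, since $H$ is nontrivial, I would choose $\varphi_k\in H$ with $k\not\equiv 1 \pmod n$ and argue by contradiction. Suppose $\varphi_k$ fixes both $z^p$ and $z^q$. Then $pk\equiv p \pmod n$ and $qk\equiv q\pmod n$, which give $k\equiv 1 \pmod{n/p}$ and $k\equiv 1\pmod{n/q}$. Hence $k\equiv 1$ modulo $\operatorname{lcm}(n/p,n/q)$.

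The only point needing care is computing this least common multiple. Because $p\neq q$ are primes dividing $n$, we have $pq\mid n$ and $\gcd(n/p,n/q)=n/(pq)$. Therefore
\[\operatorname{lcm}(n/p,n/q)=\frac{(n/p)(n/q)}{n/(pq)}=n.\]
This forces $k\equiv 1\pmod n$, so $\varphi_k$ is the identity, a contradiction. Consequently $\varphi_k$ moves $z^p$ or $z^q$, and the corresponding $H-$orbit $C$ satisfies $C\subseteq G\setminus\mathcal{U}_n$ and $|C|>1$.

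I do not expect a real obstacle. The one step where the hypothesis of two distinct primes is genuinely used is the lcm computation; for a prime power $n$ it fails, consistent with $\varphi_{1+p}$ fixing $z^{p^{n-1}}$.
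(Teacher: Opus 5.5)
Your proof is correct and takes essentially the same route as the paper. The paper also assumes every non-unit orbit is a singleton, so a nontrivial $\varphi_t$ fixes $z^{p_1^{a_1}}$ and $z^{p_2^{a_2}}$; then $n/p_1^{a_1}$ and $n/p_2^{a_2}$ both divide $t-1$, forcing $n\mid t-1$, whereas you run the same argument with $z^p$ and $z^q$ and an explicit lcm computation.
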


\begin{proof}
    Let $|G|=p_1^{a_1}p_2^{a_2}\cdots p_k^{a_k}$, $k\geq 2$, where $p_1,p_2,\cdots, p_k$ are distinct primes. Suppose the result is false. Then, the orbit of $z^{p_i^{a_i}}$ is $\{z^{p_i^{a_i}}\}$ for all $i$ as $z^{p_i^{a_i}}\in G\setminus \mathcal{U}_n$. Let $\varphi_t\in H\setminus \{id\}$. Then $\varphi_t(z^{p_i^{a_i}})=z^{tp_i^{a_i}}$. But $z^{p_i^{a_i}}$ is fixed by $\varphi_t$, so $p_i^{a_i}(t-1) \equiv 0 \mod n$ and $(t-1) \equiv 0 \mod \frac{n}{p_i^{a_i}}$, thus $p_2^{a_2}\cdots p_k^{a_k}\mid (t-1)$ and $p_1^{a_1}p_3^{a_3}\cdots p_{a_k}^{a_k}\mid (t-1)$. This implies, $n\mid (t-1)$, a contradiction.
\end{proof}

Let us begin by considering cyclic groups of even, non-prime power order.

\begin{Lem}\label{lem:2odd}
    Let $G=\mathcal{C}_{2m}=\langle z\rangle$ for $m\in \Z$ odd, $H\leq \Aut(G)$ be nontrivial, and $\S=\S(G,H)$. Then $T(G,\S)$ is not AC.
\end{Lem}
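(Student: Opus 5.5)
The plan is to contradict the criterion of Theorem \ref{thm:sequiv}: I will exhibit two principal sets $P_i\neq P_j^*$ whose product $P_iP_j$ is not a single $H$-orbit. The key object is the unique involution $x=z^m$ of $G$. Since every automorphism preserves element orders, $x$ is fixed by all of $\Aut(G)$, so $O(x)=\{x\}$.

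First I show that $H$ acts nontrivially on the odd part $K=\langle z^2\rangle\cong\mathcal{C}_m$. Pick $\varphi_t\in H$ with $\varphi_t\neq \mathrm{id}$, so $t$ is odd. Suppose $\varphi_t$ fixed $z^2$. Then $2t\equiv 2 \pmod{2m}$, so $t\equiv 1\pmod m$. Since $t$ is also odd and $m$ is odd, this gives $t\equiv 1\pmod{2m}$, contradicting $\varphi_t\neq\mathrm{id}$. Hence with $h=z^2$ we have $\varphi_t(h)\neq h$. (If $m=1$ then $\Aut(G)$ is trivial and there is nothing to prove, so we may take $m>1$.)

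Next take $P_i=O(h)$ and $P_j=O(xh^{-1})$. Every element of $O(h)$ lies in $K$, i.e. has odd order. Every element of $O(xh^{-1})^*=O(xh)$ lies in the coset $xK$, i.e. has even order. Hence $O(h)\neq O(xh^{-1})^*$. If $T(G,\S)$ were AC, Theorem \ref{thm:sequiv} would force
\[O(h)O(xh^{-1})=O(h\cdot xh^{-1})=O(x)=\{x\}.\]
But the product also contains $\varphi_t(h)\cdot xh^{-1}=x\,\varphi_t(h)h^{-1}$, and this is not $x$ because $\varphi_t(h)\neq h$. This is a contradiction, so $T(G,\S)$ is not AC.

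The only real obstacle is choosing the right pair of orbits. Once we use the singleton orbit $\{x\}$ as the target of the product and separate the two factor orbits by the parity of element orders, the argument is short. The one remaining check is that $H$ moves some element of the odd part, which is the congruence step above, in the same spirit as Lemma \ref{lem:bigclass}.
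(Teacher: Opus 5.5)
Your proof is correct and follows essentially the paper's approach: both use the singleton orbit $\{z^m\}$ of the unique involution as the forced value of a product of two orbits that are not mutually inverse, one of which is not a singleton, contradicting Theorem \ref{thm:sequiv}. The paper uses $O(z)$ and $O(z^{m-1})$ instead of your $O(z^2)$ and $O(z^{m-2})$. With its choice, $|O(z)|=|H|>1$ is automatic, so it needs no congruence step showing that $H$ moves $z^2$. In exchange, it checks that $z\mapsto z^{m+1}$ is not an automorphism, where you separate the two orbits by the parity of element orders.
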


\begin{proof}
    Note that $O(z^m)=\{z^m\}$ as $z^m$ is the only element of order $2$. We claim $O(z)\neq O(z^{m-1})^*$. Suppose $O(z)^*=O(z^{m-1})$. Then $z^{-1}\in O(z^{m-1})$. Hence, there exists some $\varphi\in H$ such that $\varphi(z^{-1})=z^{m-1}$. Thus, $\varphi(z)=z^{m+1}$, and since $m$ is odd, $\gcd(m+1,2m)$ is even. Then $z\mapsto z^{m+1}$ is not an automorphism of $G$ in this case, a contradiction. Therefore, $O(z)^*\neq O(z^{m-1})$.
    
    Now $|H|>1$ and $|O(z)|=|H|$, so there is some $b=z^k\in O(z)$ such that $b\neq z$. Then
    \[\overline{O(z)}\cdot \overline{O(z^{m-1})}=(z+z^k+\cdots )(z^{m-1}+\cdots)=z^m+z^{m+k-1}+\cdots =\overline{O(z^m)}+\overline{O(z^{m+k-1})}+\cdots.\]
    Therefore, $T(G,\S)$ cannot be $AC$.
\end{proof}

\begin{Thm}\label{thm:evencase}
    Let $G$ be a cyclic group of non-prime power even order. Let $1<H\leq \Aut(G)$ and $\S=\S(G,H)$. Then $T(G,\S)$ is not AC.
\end{Thm}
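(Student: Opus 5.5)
We reduce the general even case to Lemma \ref{lem:2odd}, using Lemma \ref{lem:charac} together with a fixed-point argument in the spirit of Lemma \ref{lem:fixed}. Write $|G|=n=2^am$ with $a\geq 1$ and $m>1$ odd; $m>1$ holds because $n$ is not a prime power. Suppose, for a contradiction, that $T(G,\S)$ is AC. If $a=1$, Lemma \ref{lem:2odd} already gives the contradiction, so assume $a\geq 2$. Let $K=\langle z^{2^{a-1}}\rangle$, the unique subgroup of order $2m$. It is characteristic in $G$, so every element of $H$ restricts to an automorphism of $K$.

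\textbf{Case 1: $H$ acts nontrivially on $K$.} The $H$-orbits in $K$ are exactly the orbits of the image $\bar H\leq \Aut(K)$ of $H$ under restriction, and $\bar H$ is nontrivial. By Lemma \ref{lem:charac}, $T(K,\mathfrak{T})$ is AC, where $\mathfrak{T}=\S(K,\bar H)$. But $K\cong \mathcal{C}_{2m}$ with $m$ odd, so Lemma \ref{lem:2odd} says $T(K,\mathfrak{T})$ is not AC. This is a contradiction.

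\textbf{Case 2: $H$ fixes $K$ pointwise.} Put $c=2^{a-1}$. Then $O(z^c)=\{z^c\}$, since $z^c\in K$. Pick a nontrivial $\varphi_k\in H$; then $k\not\equiv 1 \bmod n$.
\begin{itemize}
\item The product $O(z)O(z^{c-1})$ contains $z\cdot z^{c-1}=z^c$. It also contains $\varphi_k(z)z^{c-1}=z^{k+c-1}$, which differs from $z^c$ because $k\not\equiv 1$.
\item So $O(z)O(z^{c-1})$ is not a single orbit. By Theorem \ref{thm:sequiv}, this forces $O(z)^*=O(z^{c-1})$.
\item Hence some $\varphi_t\in H$ satisfies $\varphi_t(z^{-1})=z^{c-1}$, that is, $t\equiv 1-c \bmod n$.
\item Since $\varphi_t$ fixes $z^c$, we get $c(1-c)\equiv c \bmod n$, i.e. $c^2=2^{2a-2}\equiv 0 \bmod 2^am$.
\item This is impossible, since $m>1$ is odd and does not divide a power of $2$.
\end{itemize}
So $T(G,\S)$ is not AC in either case.

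The point needing most care is Case 1. One must check that restricting the $H$-orbit Schur ring to $K$ really gives the orbit Schur ring $\S(K,\bar H)$ with $\bar H$ nontrivial, so that Lemma \ref{lem:2odd} applies verbatim. This holds because the $H$-orbits of elements of $K$ coincide with their $\bar H$-orbits. In Case 2 the only subtlety is confirming that $z^{k+c-1}\neq z^{c}$, which is immediate.
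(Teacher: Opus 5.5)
Your proof is correct. It takes a different route from the paper's.

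\textbf{The paper's route.} The paper inducts on the $2$-adic valuation $a$ of $|G|=2^ar$, with Lemma \ref{lem:2odd} as the base case. It restricts to the index-$2$ subgroup $\langle z^2\rangle$, of order $2^{a-1}r$, and uses the inductive hypothesis there. This shows $H$ must fix $\langle z^2\rangle$ pointwise. That pins $H$ down to $\{\mathrm{id},\varphi_{1+2^{a-1}r}\}$. The paper then computes explicitly that $O(z)O(z)$ meets two distinct singleton classes, $\{z^2\}$ and $\{z^{2+2^{a-1}r}\}$, even though $O(z)\neq O(z)^*$.

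\textbf{Your route.} You go straight to the subgroup of order $2m$, where Lemma \ref{lem:2odd} applies with no induction at all. The price comes in your Case 2. There $H$ only fixes a small subgroup, so it cannot be determined explicitly. You handle this by rerunning the argument of Lemma \ref{lem:2odd}, with the fixed generator $z^c$ of $K$ playing the role of the involution. Where that lemma used parity to rule out $O(z)^*=O(z^{c-1})$, you use the pointwise-fixing condition instead. In effect this is $k\equiv 1 \bmod 2m$ for every $\varphi_k\in H$, which is incompatible with $t\equiv 1-c$.

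\textbf{What each buys.} The paper's argument gives the stronger structural fact $|H|\le 2$ in the critical case, and a very concrete final contradiction. Yours is shorter and non-inductive. Its Case 2 works uniformly for any $H$ fixing $K$ pointwise, much in the spirit of Lemma \ref{lem:fixed}.
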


\begin{proof}
    Suppose $|G|=2^ar$ where $r$ is odd. We induct on $a$. For $a=1$, Lemma \ref{lem:2odd} gives the result. Now suppose for any cyclic group $L$, $|L|=2^mr$, $m\geq 1$ we have the following: for every $1\lneq H\leq \Aut(G)$, if $\S=\S(L,H)$, then $T(L,\S)$ is not AC. We now assume $|G|=2^{m+1}r$. Let $1<H\leq \Aut(G)$ and $\S=\S(G,H)$ be the orbit Schur ring. Suppose $T(G,\S)$ is AC. 

    Let $K_2 \text{\ char\ } G$ of index $2$. As $T(G,\S)$ is AC, by Lemma \ref{lem:charac}, $T(K_2,\S_{K_2})$ is AC. However, by induction for $1<M\leq \Aut(K_2)$, the orbit Schur ring $\mathfrak{T}=\S(K_2,M)$ gives $T(K_2,\mathfrak{T})$ that is not AC. Therefore, as $T(K_2,\S_{K_2})$ is AC, we see that $H|_{K_2}$ is trivial. That is, $H$ fixes every element of $K_2$. 

    Let $G=\langle z\rangle$, so $K_2=\langle z^2\rangle$. Let $\varphi_t\in H$. Then $\varphi_t(z^2)=z^{2}$, so $t\equiv 1 \mod 2^{m}r$. Then either $t=1$, or $t=1+2^{m}r$. Therefore, $H=\{id, \varphi_{1+2^mr}\}$.

    We know $|O(z)|=|H|$ and so $O(z)=\{z, z^{1+2^mp_1^{b_1}p_2^{b_2}\cdots p_m^{b_m}}\}$. Clearly $O(z)\neq O(z)^*$. We have
    \[O(z)O(z)=(z+z^{1+2^mr})(z+z^{1+2^mr})=z^2+z^{2+2^{m}r}+z^{2+2^{m}r}+z^{(1+2^mr)(1+2^mpr)}\]
    \[=z^2+2z^{2+2^{m}r}+z^{1+2^{m+1}r+2^{2m}p^2b}=2z^2+2z^{2+2^{m}r}.\]
    This is a sum of at least two distinct classes since $z^2\in K_2$ and $\varphi_t$ fixes $K_2$. Hence, $\{z^2\}$ is a class and we have a contradiction.
\end{proof}

Now for cyclic groups of odd non-prime power order we have:

\begin{Prop}\label{prop:paqcase}
    Let $n=p^aq$ for distinct odd primes $p$ and $q$, with $a\geq 1$. Let $G=\mathcal{C}_n$ and $H\leq \Aut(G)$. Let $\S=\S(G,H)$. Then $T(G,\S)$ is AC if and only if $H$ is trivial.
\end{Prop}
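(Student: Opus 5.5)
The easy direction is immediate: if $H$ is trivial then $\S$ is the group algebra $\C[G]$, and every pair of singletons multiplies to a singleton, so $T(G,\S)$ is AC by Theorem \ref{thm:sequiv}. For the converse I would assume $H\neq 1$ and $T(G,\S)$ is AC, and then exhibit two principal sets $C,D$ with $C\neq D^*$ whose product $CD$ meets two different $H-$orbits. This contradicts Theorem \ref{thm:sequiv}.

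Write $G=\langle z\rangle = P\times Q$ with $P=\langle z^{q}\rangle$ of order $p^a$ and $Q=\langle z^{p^a}\rangle$ of order $q$. Both are characteristic, so every $\varphi\in H$ preserves them. Via $\Aut(G)=\Aut(P)\times\Aut(Q)$, each $h\in H$ acts componentwise, $h(xy)=h(x)h(y)$ for $x\in P$, $y\in Q$. Let $H_P$ and $H_Q$ denote the images of $H$ in $\Aut(P)$ and $\Aut(Q)$. Since $H\neq 1$, at least one of $H_P,H_Q$ is nontrivial. Fix generators $x$ of $P$ and $y$ of $Q$.

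\textbf{Case $H_P\neq 1$.} Take $C=O(xy)$ and $D=O(x^{-1})$. Since $Q$ is characteristic, every element of $D^*=O(x)$ lies in $P$, while $xy\notin P$; hence $C\neq D^*$. Now
\[CD=\{h(x)h'(x)^{-1}h(y)\colon h,h'\in H\}.\]
Taking $h'=h$ gives $h(y)\in Q\setminus\{e\}$. Taking $h'=1$ and $h$ with $h(x)\neq x$ (possible because $H_P\neq1$ and $x$ generates $P$) gives $h(x)x^{-1}h(y)$, whose $P-$component is nontrivial, so it does not lie in $Q$. An $H-$orbit contained in one of these sets cannot meet the other, because $Q$ is characteristic. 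So $CD$ is not a single orbit, contradicting Theorem \ref{thm:sequiv}.

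\textbf{Case $H_P=1$, so $H_Q\neq1$.} The argument is symmetric: use $C=O(xy)$ and $D=O(y^{-1})$, and note that $P$ is characteristic. Then $CD$ contains $x$ and also $x\,h(y)y^{-1}$ with $h(y)\neq y$. The first lies in $P$ and the second does not, giving the same contradiction.

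The only point needing care is the verification $C\neq D^*$, together with the claim that the two exhibited elements lie in different orbits. Both reduce to the fact that $P$ and $Q$ are characteristic, so membership in $P$ or in $Q$ is $H-$invariant. I expect no real obstacle. Notably, this argument seems not to use the oddness of $p,q$ or the earlier prime-power classification, which suggests it may extend to general non-prime-power $n$.
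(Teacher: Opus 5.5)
Your proof is correct, and it takes a more direct route than the paper. The paper restricts $H$ to the characteristic subgroups $K$ and $L$ of orders $p^a$ and $q$ via Lemma \ref{lem:charac}. It then uses the prime-power classification (Theorem \ref{thm:primepower}) to reduce to $H|_K\in\{1,\langle\varphi_{1+p}\rangle,\Aut(K)\}$ and $H|_L\in\{1,\Aut(L)\}$. Finally it refutes each nontrivial combination with a specific product, namely $O(x^{-1})O(xy)$, $O(x)O(xy)$ and $O(xy)O(y)$.

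In the cases where the paper uses them, $O(x)=O(x^{-1})$ and $O(y)=O(y^{-1})$, so these are exactly your two products. What you add is this observation: because both factors are characteristic, the product always contains one element inside a characteristic subgroup and one outside it, as soon as $H$ moves a generator of the other factor. So you never need to know what $H$ restricts to.

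As a result, your proof is independent of the prime-power classification. It also avoids the parity and size facts the paper relies on: $x^2\neq e$, $y^2\neq e$, and $a>1$ in Subcase 1.2. Your closing suspicion is right as well. Take $P$ to be a nontrivial Sylow subgroup of $\mathcal{C}_n$ and $Q$ its nontrivial complement; nothing in your argument changes. So it proves Theorem \ref{cor:allcase} for every non-prime-power $n$ at once. That would make Lemma \ref{lem:bigclass}, Lemma \ref{lem:2odd}, Theorem \ref{thm:evencase} and Theorem \ref{thm:oddcase} unnecessary for that purpose.

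Your second case also does not need the assumption $H_P=1$. For any $h$ with $h(y)\neq y$, the element $h(x)h(y)y^{-1}$ of $O(xy)O(y^{-1})$ already lies outside $P$, so the two cases are genuinely symmetric.

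One small slip: in the first case, $D^*=O(x)\subseteq P$ holds because $P$, not $Q$, is characteristic.
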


\begin{proof}
    First assume $T(G,\S)$ is AC. Then $G=K\times L$ where $K=\langle x\rangle$, $|K|=p^a$ and $L=\langle y\rangle$, $|L|=q$. Both $K$ and $L$ are characteristic. As $T(G,\S)$ is AC, by Lemma \ref{lem:charac}, $T(K,\S|_K)$ and $T(L,\S|_L)$ are AC. Since $T(K,\S|_K)$ is AC we have, by Theorem \ref{thm:primepower}, one of:
    \begin{enumerate}
        \item $H|_K$ is trivial;
        \item $H|_K$ is generated by the map $x\mapsto x^{1+p}$;
        \item $H|_K=\Aut(K)$.
    \end{enumerate}
    Since $|L|=q$ and $T(L,\S|_L)$ is AC by Theorem \ref{thm:primepower} either $H|_L=\{id\}$ or $H|_L=\Aut(L)$. We consider cases based on these possible restrictions of $H$. 

    {\bf Case 1: } $H|_L=\{id\}$. As $L\text{ char } G$, for $0\leq i<q$, $O(y^i)=\{y^i\}$. We consider three subcases based on $H|_K$.\\
    {\bf Subcase 1.1: }$H|_K=\{id\}$. Then $H=\{id\}$. \\
    {\bf Subcase 1.2: }$H|_K=\langle \varphi_{1+p\rangle}$. If $a=1$, then $\varphi_{1+p}$ is just the identity map and we are in subcase 1.1. So assume $a>1$. Notice $x^{-1}\in K$ and $K \text{ char } G$, so $O(x),O(x^{-1})\subseteq K$. Also $x,x^{1+p}\in O(x)$ and $x^{-1},x^{-1-p}\in O(x^{-1})$. We consider $O(x^{-1})O(xy)$. As $O(x^{-1})\subseteq K$ and $xy\not\in K$, we have $O(xy)\neq O(x)$. Since $T(G,\S)$ is AC we have $O(x^{-1})O(xy)=O(y)$. However, $x^{-1-p}xy=x^{-p}y\in O(x^{-1})O(xy)$. As $a>1$, $x^{-p}y\neq y$. However, $x^{-p}y\in O(x^{-1})O(xy)=O(y)=\{y\}$, a contradiction.\\
    {\bf Subcase 1.3: }$H|_K=\Aut(K)$. As $H|_K=\Aut(K)$ and $K \text{ char } G$, we have $O(x)$ is the orbit of $x$ in $K$ under $\Aut(K)$. Thus, $O(x)=\{x^r\colon \gcd(r,p)=1\}$. Notice $x,x^{-1}\in O(x)=O(x)^*$. We consider $O(x)O(xy)$. Since $O(x)\subseteq K$ and $O(xy)\not\subseteq K$, $O(x)^*\neq O(xy)$. Since $T(G,\S)$ is AC, $O(x)O(xy)=O(x^2y)$. However, $x^{-1}xy=y\in O(x)O(xy)$, so $y\in O(x^2y)$ and $O(x^2y)=O(y)=\{y\}$. Hence, $x^2=e$. However, the order of $x$ is $p^a$ which is odd. We thus have a contradiction.

    {\bf Case 2: }$H|_L=\Aut(L)$. Since $L \text{ char } G$ and $H|_L=\Aut(L)$, we have $O(y)=\{y,y^2,\cdots, y^{q-1}\}$. Note $O(y)=O(y)^*\subseteq L$. Since $xy\not\in L$ we have $O(y)^*\neq O(xy)$. Then as $T(G,\S)$ is AC, $O(xy)O(y)=O(xy^2)$. However, $xyy^{q-1}=x\in O(xy)O(y)$ as well. Then $O(x)=O(xy)O(y)=O(xy^2)$. Since $x\in K \text{ char } G$, $O(x)\subseteq K$, so $xy^2\in K$ since $xy^2\in O(xy^2)=O(x)$. Then $y^2\in K$, a contradiction. 

    From these cases, if $T(G,\S)$ is AC, then $H|_K=\{id\}$ and $H|_L=\{id\}$, so $H$ is trivial. But then $\S$ is the full group ring and $T(G,\S)\cong M_n(\C)$ is AC.
\end{proof}

At this point we are ready to prove the result for non-prime power odd order cyclic groups.

\begin{Thm}\label{thm:oddcase}
    Let $G$ be a finite cyclic group of non-prime power odd order. Let $1<H\leq \Aut(G)$ and $\S=\S(G,H)$. Then $T(G,\S)$ is not AC.
\end{Thm}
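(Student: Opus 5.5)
We argue by contradiction, reducing to Proposition \ref{prop:paqcase} through characteristic subgroups and Lemma \ref{lem:charac}. Write $|G|=n=p_1^{a_1}\cdots p_k^{a_k}$ with $k\geq 2$ distinct odd primes. Suppose $1<H\leq\Aut(G)$ and $T(G,\S)$ is AC.

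\emph{Step 1: $H$ is trivial on every subgroup of order $p_i^{a_i}p_j$.} Fix $i\neq j$ and let $M\leq G$ be the unique subgroup of order $p_i^{a_i}p_j$. It is characteristic, so by Lemma \ref{lem:charac} the $H$-orbits in $M$ form the orbit Schur ring $\S(M,H|_M)$, and $T(M,\S(M,H|_M))$ is AC. Here $H|_M\leq\Aut(M)$, and $|M|$ has the form treated in Proposition \ref{prop:paqcase}. Hence $H|_M$ is trivial. In particular, applying this to $M$ of order $p_i^{a_i}p_j$ for each $i$ (with any $j\neq i$, which exists since $k\geq2$), $H$ fixes every element of each Sylow subgroup $P_i$.

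\emph{Step 2: $H$ is trivial.} Take $\varphi_t\in H$. Since $\varphi_t$ fixes a generator of $P_i$, we get $t\equiv 1\pmod{p_i^{a_i}}$ for every $i$. By the Chinese Remainder Theorem, $t\equiv 1\pmod n$, so $\varphi_t=\mathrm{id}$. This contradicts $H>1$. Equivalently, $G=P_1\times\cdots\times P_k$ is generated by the $P_i$, and an automorphism fixing all generators is the identity.

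\emph{Expected obstacle.} The only delicate point is making sure Proposition \ref{prop:paqcase} applies exactly as stated. It is phrased for $n=p^aq$ with $q$ prime, which is why we choose $M$ of order $p_i^{a_i}p_j$ rather than $p_i^{a_i}p_j^{a_j}$. We also need that restriction gives a genuine orbit Schur ring over $M$ whose orbits are the $H|_M$-orbits, which is immediate from Lemma \ref{lem:charac} since $M$ is characteristic.

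If one preferred to avoid the restriction to exponent one, an alternative is strong induction on the number of prime factors of $n$, counted with multiplicity, using the characteristic subgroup of index $p_1$. We will not need this, since Step 1 already pins down $H$ on each Sylow subgroup.
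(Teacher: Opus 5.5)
Your argument is correct, and it takes a different route from the paper.

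The paper inducts on the number of prime factors of $|G|$, counted with multiplicity. It uses Lemma \ref{lem:bigclass} to find a non-unit $H$-orbit $C$ with $|C|>1$, then passes to a maximal subgroup $K\supseteq\langle C\rangle$ of prime index $q$. There are two cases:
\begin{itemize}
\item If $|K|=p^k$, then $|G|=p^kq$ and Proposition \ref{prop:paqcase} applies directly.
\item Otherwise $|K|$ is an odd non-prime power on which $H$ still acts nontrivially, because $C\subseteq K$. This contradicts the inductive hypothesis via Lemma \ref{lem:charac}.
\end{itemize}

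You instead argue local-to-global. Applying Lemma \ref{lem:charac} and Proposition \ref{prop:paqcase} to each characteristic subgroup of order $p_i^{a_i}p_j$ forces $H$ to act trivially on every Sylow subgroup. The Chinese Remainder Theorem, or equivalently the fact that the Sylow subgroups generate $G$, then shows $H$ is trivial.

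What this buys you: your version needs neither induction nor Lemma \ref{lem:bigclass}, and it makes clear that the only real input is the two-prime case. Choosing the exponent $1$ on $p_j$ is exactly what makes Proposition \ref{prop:paqcase} applicable as stated.

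Neither argument is logically cheaper on the key input. Both need Proposition \ref{prop:paqcase} for general $a\geq 1$: the paper invokes it for $|G|=p^kq$ in its Case 1. The paper's approach only has to exhibit a single subgroup on which $H$ acts nontrivially, which matches its inductive treatment of the even case in Theorem \ref{thm:evencase}. Yours must show $H$ is trivial on a generating family of subgroups.
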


\begin{proof}
    We induct on the number $k$ of prime divisors of $|G|$. If $k=2$, $|G|=pq$, for distinct odd primes $p$ and $q$ and by by Proposition \ref{prop:paqcase}, $T(G,\S)$ is not AC as $H\neq \{1\}$.

    Assume for all odd non-prime power cyclic groups $G$ whose order has at most $k$ prime factors, and $1<H\leq \Aut(G)$ that $T(G,\S)$ is not AC. Now suppose $|G|=m$ where $m$ has $k+1$ prime factors and let $1<H\leq \Aut(G)$. Let $U_m=\{g\in \mathcal{C}_m\colon \gcd(g,m)=1\}$. As $m$ is not a prime power it has at least two distinct prime divisors. Then by Lemma \ref{lem:bigclass}, there exists an $H-$orbit $C\subseteq G\setminus U_m$ such that $|C|>1$. As $C$ contains no units we have $\langle C\rangle \lneq G$. Let $K$ be a maximal subgroup of $G$ containing $\langle C\rangle$. Then $K$ has index $q$ for some odd prime $q$. Then $|K|$ has $k$ prime factors. Now either $|K|$ is a prime power, or it is not.

    {\bf Case 1: }Suppose $|K|=p^k$ for some odd prime $p$. Then $|G|=|K|\cdot |G\colon K|=p^kq$, and by Proposition \ref{prop:paqcase}, $T(G,\S)$ is not AC as $H$ is nontrivial.
    
    {\bf Case 2: }Suppose $|K|$ is a non-prime power and is odd. Then $K\text{ char }G$. We suppose $T(G,\S)$ is AC. By Lemma \ref{lem:charac}, $T(K,\S|_K)$ is AC. Notice though that $C\subseteq K$, is an $H-$orbit in $G$. It is then an $H-$orbit in $K$ as well since $K \text{ char } G$. Since $|C|>1$, $H|_K$ must be nontrivial. Then by the inductive hypothesis, $T(K,\S|_K)$ is not AC. This contradicts $T(K,\S|_K)$ being AC.
\end{proof}

\begin{Thm}\label{cor:allcase}
     Let $G$ be a finite cyclic group of non-prime power order. Let $H\leq \Aut(G)$. Let $\S=\S(G,H)$. Then $T(G,\S)$ is AC if and only if $H$ is trivial.
\end{Thm}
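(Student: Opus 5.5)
This theorem simply assembles the two preceding non-prime-power results, so I would prove it by splitting on the parity of $|G|$ and handling the trivial subgroup separately.

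For the ``if'' direction, suppose $H$ is trivial. Then every $H-$orbit is a singleton, so $\S(G,H)$ is the full group algebra $\C[G]$. This case is already noted in the paper: $T(G,\C[G])\cong M_{|G|}(\C)$ is AC. To confirm it directly with Theorem \ref{thm:sequiv}, note that $\S$ is commutative because $G$ is abelian. For singletons we have $\{x\}\{y\}=\{xy\}$, so the condition of Theorem \ref{thm:sequiv} holds for every pair of principal sets, and $T(G,\S)$ is AC.

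For the ``only if'' direction, suppose $T(G,\S)$ is AC and, for contradiction, that $H$ is nontrivial. Since $|G|$ is not a prime power, it has at least two distinct prime divisors.
\begin{itemize}
\item If $|G|$ is even, Theorem \ref{thm:evencase} says that $T(G,\S)$ is not AC for any $1<H\leq\Aut(G)$, which is a contradiction.
\item If $|G|$ is odd, Theorem \ref{thm:oddcase} gives the same conclusion, again a contradiction.
\end{itemize}
Hence $H$ must be trivial.

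I do not expect a real obstacle, since the substantive work is already done in Theorems \ref{thm:evencase} and \ref{thm:oddcase}. The one point to check is that their hypotheses cover every non-prime-power cyclic group. The even case is stated for arbitrary $2^a r$ with $a\geq 1$ and $r>1$ odd. The odd case's induction starts at two distinct odd primes and uses Proposition \ref{prop:paqcase} when the maximal subgroup has prime power order. Between them, every non-prime-power order is covered.
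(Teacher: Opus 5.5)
Your proposal is correct and follows the paper's proof exactly. Trivial $H$ gives the full group ring, which is AC. A nontrivial $H$ is ruled out by Theorem \ref{thm:evencase} or Theorem \ref{thm:oddcase} according to the parity of $|G|$. Your extra check of the trivial case via Theorem \ref{thm:sequiv}, with singletons multiplying to singletons, is a harmless alternative to the paper's remark that $T(G,\S)\cong M_{|G|}(\C)$.
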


\begin{proof}
    First assume that $H$ is nontrivial. Then by Theorems \ref{thm:evencase} and \ref{thm:oddcase}, $T(G,\S)$ is not AC. If $H$ is trivial, then $\S$ is the full group ring and $T(G,\S)\cong M_{|G|}(\C)$ is AC.
\end{proof}

\section{AC Terwilliger Algebras over Cyclic Groups}

In this section we consider AC Terwilliger algebras for any Schur ring over $\mathcal{C}_n$, and prove Theorem \ref{thm:main}.

\begin{Thm}[Leung and Man, \cite{LeungII}]
Every Schur ring over a finite cyclic group is either a trivial Schur ring, orbit Schur ring, direct product Schur ring, or wedge product Schur ring. 
\end{Thm}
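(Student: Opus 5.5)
This is the Leung--Man structure theorem, cited from \cite{LeungII}. The paper takes it as known, so we do not reprove it. The natural plan is to recall how it follows from the structure theory of Schur rings over cyclic groups, rather than attempt an independent proof.

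\textbf{Step 1: reduce to the non-orbit case.} Let $\S$ be a Schur ring over $\mathcal{C}_n$ that is neither trivial nor an orbit Schur ring. We must show that $\S$ is a direct product or a wedge product. The key tool is the rationality of Schur rings over cyclic groups: every principal set $P$ satisfies $P^{(m)}\in\mathcal{D}(\S)$ for every $m$ with $\gcd(m,n)=1$. This is the Schur--Wielandt multiplier theorem, proved by raising $\overline{P}$ to the power $p$ modulo $p$ for each prime $p\nmid n$. Hence $\mathcal{U}_n$ acts on $\mathcal{D}(\S)$.

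\textbf{Step 2: the stabilizer and the radical.} For a principal set $P$, consider the largest subgroup $\mathrm{rad}(P)=\{g\colon gP=P\}$. If every principal set has trivial radical, the Leung--Man argument shows that each principal set is a single orbit of a fixed subgroup $H\leq\mathcal{U}_n\cong\Aut(\mathcal{C}_n)$. The argument uses Schur's theorem on $\S$-subgroups together with an induction on $n$, and it yields $\S=\S(\mathcal{C}_n,H)$. If instead some principal set, in particular one meeting $\mathcal{U}_n$, has a nontrivial radical $K$, then either:
\begin{enumerate}
\item all principal sets outside an $\S$-subgroup $H\geq K$ are unions of $K$-cosets, which gives the wedge product $\S=\S_H\wedge_K\S_{G/K}$; or
\item $\mathcal{C}_n$ splits as $\mathcal{C}_r\times\mathcal{C}_s$ with $\gcd(r,s)=1$, where both factors are $\S$-subgroups and $\S$ is the tensor product of its restrictions.
\end{enumerate}

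\textbf{Main obstacle.} The hard step is the trivial-radical case: showing that the set $T=P\cap\mathcal{U}_n$ is a subgroup and that every other principal set equals $gT$-orbits. Leung and Man handle this by comparing $\overline{T}\,\overline{T^*}$ and counting through the multiplier action. Since this is a deep published classification, within this paper we simply invoke \cite{LeungI,LeungII}.
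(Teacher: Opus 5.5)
Citing \cite{LeungII} is exactly what the paper does. It gives no proof of this theorem, so as a justification your proposal matches the paper. The outline you add in Step~2, however, misstates how the Leung--Man argument splits, and the implication it relies on is false. You should correct it or drop it.

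\textbf{The correct case split.} The division is governed by the radical of the single principal set $T$ containing the generator $z$, not by the radicals of all principal sets.
If $\mathrm{rad}(T)\neq\{e\}$, then $\S$ is a wedge product.
If $\mathrm{rad}(T)=\{e\}$, then $\S$ is a direct product of an orbit Schur ring with trivial (rank-two) Schur rings over subgroups of pairwise coprime orders. This branch is the one that produces the trivial, orbit \emph{and} direct-product cases.
So direct products do not arise from a nontrivial radical. Trivial radicals also do not force an orbit Schur ring, even when every principal set has trivial radical.

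\textbf{A counterexample to your Step~2.} In $\mathcal{C}_{12}=\langle z\rangle$ put $x=z^4$ and $y=z^3$. Take the direct product of $\C[\langle x\rangle]$ with the trivial Schur ring over $\langle y\rangle$, whose classes are $\{e\}$ and $\{y,y^2,y^3\}$.
Its principal sets are $\{x^a\}$ and $x^a\{y,y^2,y^3\}$ for $a=0,1,2$, and each has trivial radical. Indeed, if $x^by^c$ fixes $x^a\{y,y^2,y^3\}$, then $b=0$ and $y^c\{y,y^2,y^3\}=\{y,y^2,y^3\}$, which forces $c=0$.
Yet $\S$ has none of the other properties your outline predicts:
\begin{itemize}
\item It is not trivial.
\item It is not an orbit Schur ring: $x\{y,y^2,y^3\}$ contains $xy$ of order $12$ and $xy^2$ of order $6$, while automorphisms preserve order.
\item It is not a wedge product. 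Its only nontrivial proper $\S$-subgroups are $\langle x\rangle$ and $\langle y\rangle$. With $K=H=\langle x\rangle$, the class $\{y,y^2,y^3\}$ is not a union of $\langle x\rangle$-cosets. With $K=H=\langle y\rangle$, the class $\{x\}$ is not a union of $\langle y\rangle$-cosets.
\end{itemize}

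\textbf{The ``main obstacle''.} The same example shows this paragraph is misdirected. Here $z=xy^3$, so $T=x\{y,y^2,y^3\}$, which is not contained in $\mathcal{U}_{12}$. Hence the principal sets cannot in general be described as orbits indexed by $T\cap\mathcal{U}_n$.
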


The case of a trivial Schur ring is the same as a trivial association scheme and has already been considered in \cite{Bastian, Tanaka}. The case of orbit Schur rings is covered in Theorems \ref{thm:2case}, \ref{thm:primepower}, and \ref{cor:allcase}. We next consider the direct product.

\begin{Thm}[Theorem 3.1, \cite{WreathProd}]\label{thm:direct}
    Let $\mathcal{A}=(X,R_i)$ and $\mathcal{B}=(Y,S_j)$ be association schemes. Let $x\in X$ and $y\in Y$. Let $T_x(\mathcal{A})$ be the Terwilliger algebra for $\mathcal{A}$ with base point $x$ and $T_y(\mathcal{B})$ be the Terwilliger algebra for $\mathcal{B}$ with base point $y$. Then
    \[T_{(x,y)}(\mathcal{A}\times \mathcal{B})\cong T_x(\mathcal{A})\otimes_\C T_y(\mathcal{B}).\]
\end{Thm}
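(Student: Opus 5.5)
The plan is to realize both Terwilliger algebras inside $M_{|X|}(\C)\otimes M_{|Y|}(\C)\cong M_{|X||Y|}(\C)$ and compare generators. The direct product scheme $\mathcal{A}\times\mathcal{B}$ lives on $X\times Y$. Its adjacency matrices are the Kronecker products $A_i\otimes B_j$, where $A_i$ is the adjacency matrix of $R_i$ and $B_j$ that of $S_j$. Ordering $X\times Y$ lexicographically, the relation $R_i\times S_j$ is exactly $A_i\otimes B_j$.

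First I will identify the dual idempotents. With base point $(x,y)$, the diagonal entry at $(u,v)$ of $E^*_{(i,j)}(x,y)$ is $(A_i)_{xu}(B_j)_{yv}$, so
\[E^*_{(i,j)}(x,y)=E_i^*(x)\otimes E_j^*(y).\]
Consequently the Bose--Mesner algebra of the product is $\mathfrak{A}\otimes\mathfrak{B}$, and the dual Bose--Mesner algebra is $\mathfrak{A}^*(x)\otimes\mathfrak{B}^*(y)$.

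Next I prove $T_{(x,y)}\subseteq T_x(\mathcal{A})\otimes T_y(\mathcal{B})$. The right side is a subalgebra of $M_{|X|}(\C)\otimes M_{|Y|}(\C)$, since $(a\otimes b)(a'\otimes b')=aa'\otimes bb'$. It contains all the generators $A_i\otimes B_j$ and $E_i^*\otimes E_j^*$ of $T_{(x,y)}$, so it contains the whole algebra they generate.

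For the reverse inclusion, I show that $T_{(x,y)}$ contains $a\otimes I$ for every $a\in T_x(\mathcal{A})$, and likewise $I\otimes b$ for every $b\in T_y(\mathcal{B})$.
\begin{itemize}
\item Summing over $j$ gives $\sum_j A_i\otimes B_j=A_i\otimes J$. This is not yet what we want, because we need $A_i\otimes I$.
\item Instead take $j=0$. Since $B_0=I$, the product scheme has $A_i\otimes I$ among its adjacency matrices.
\item Similarly, $\sum_j E_i^*(x)\otimes E_j^*(y)=E_i^*(x)\otimes I$, because the $E_j^*(y)$ sum to the identity.
\end{itemize}
So $T_{(x,y)}$ contains the generators $A_i\otimes I$ and $E_i^*\otimes I$. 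The map $a\mapsto a\otimes I$ is an algebra homomorphism, so $T_x(\mathcal{A})\otimes I\subseteq T_{(x,y)}$, and symmetrically $I\otimes T_y(\mathcal{B})\subseteq T_{(x,y)}$. Multiplying, $a\otimes b=(a\otimes I)(I\otimes b)$ lies in $T_{(x,y)}$, and these elements span the tensor product.

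Finally, I need the abstract tensor product $T_x(\mathcal{A})\otimes_\C T_y(\mathcal{B})$ to agree with the concrete span of the $a\otimes b$ inside $M_{|X|}\otimes M_{|Y|}$. This holds because the Kronecker product of subspaces is injective on the algebraic tensor product (linear independence is preserved), which gives the stated isomorphism.

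The only point needing care is noticing that $I$ appears as a relation in each factor ($B_0=I$, and the $E_j^*$ sum to $I$). Once that is in hand, the rest is formal.
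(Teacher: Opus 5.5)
Your proof is correct and complete. The Kronecker identifications $A_i\otimes B_j$ and $E^*_{(i,j)}(x,y)=E_i^*(x)\otimes E_j^*(y)$ give one inclusion. The facts $B_0=I$ and $\sum_j E_j^*(y)=I$ place $T_x(\mathcal{A})\otimes I$ and $I\otimes T_y(\mathcal{B})$ inside $T_{(x,y)}$, which gives the other. Injectivity of the Kronecker map on $T_x(\mathcal{A})\otimes_\C T_y(\mathcal{B})$ then turns this equality of concrete algebras into the stated isomorphism.

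The paper gives no proof of its own here; it imports the result as Theorem 3.1 of \cite{WreathProd}. So there is no internal argument to compare against, and yours serves as a self-contained justification.

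Your version actually proves slightly more than the displayed abstract isomorphism. It shows an equality of subalgebras of $M_{|X||Y|}(\C)$ acting on $\C^{X}\otimes\C^{Y}$. This concrete form is what Proposition \ref{prop:dirnotac} implicitly needs when it identifies the tensor product of the two primary components with the primary component of $\mathcal{A}\times\mathcal{B}$. That identification follows from $(\mathfrak{A}\otimes\mathfrak{B})(\hat{x}\otimes\hat{y})=\mathfrak{A}\hat{x}\otimes\mathfrak{B}\hat{y}$, and an abstract isomorphism alone would not give it.
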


\begin{Prop}\label{prop:dirnotac}
    Let $\mathcal{A}=(X,R_i)$ and $\mathcal{B}=(Y,S_j)$ be association schemes. Let $x\in X$ and $y\in Y$. Then $T_{(x,y)}(\mathcal{A}\times \mathcal{B})$ is AC if and only if $T_x(\mathcal{A})$ and $T_y(\mathcal{B})$ both only have the primary component.
\end{Prop}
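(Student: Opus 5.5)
We will use Theorem \ref{thm:direct}, which identifies $T_{(x,y)}(\mathcal{A}\times\mathcal{B})$ with $T_x(\mathcal{A})\otimes_\C T_y(\mathcal{B})$, and then read off the irreducible modules of a tensor product of semisimple algebras. Write the Wedderburn decompositions $T_x(\mathcal{A})\cong\bigoplus_{\alpha} M_{m_\alpha}(\C)$ and $T_y(\mathcal{B})\cong\bigoplus_{\beta} M_{n_\beta}(\C)$, where $\alpha=0$ and $\beta=0$ index the primary components, with $m_0=d_{\mathcal{A}}+1$ and $n_0=d_{\mathcal{B}}+1$. Since $M_m(\C)\otimes M_n(\C)\cong M_{mn}(\C)$, the tensor product is $\bigoplus_{\alpha,\beta}M_{m_\alpha n_\beta}(\C)$. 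Its irreducible modules are therefore exactly the tensor products $W_\alpha\otimes W'_\beta$ of irreducible modules, and these have dimension $m_\alpha n_\beta$.

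The next step is to identify the primary module of the product. It is $\mathfrak{A}(\mathcal{A}\times\mathcal{B})\widehat{(x,y)}$. Because the adjacency matrices of the direct product are the Kronecker products $A_i\otimes B_j$, this module equals $\mathfrak{A}(\mathcal{A})\hat x\otimes\mathfrak{A}(\mathcal{B})\hat y$. So it corresponds to the pair $(0,0)$, and its dimension $(d_{\mathcal{A}}+1)(d_{\mathcal{B}}+1)$ agrees with the number of classes of $\mathcal{A}\times\mathcal{B}$. We must also check that the primary module is the only irreducible module of type $(0,0)$ in the standard module $\C^{|X|}\otimes\C^{|Y|}$. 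This holds because the standard module of each factor contains its primary module with multiplicity one (Terwilliger), and the standard module of the product is the tensor product of the factor standard modules.

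For the forward direction, suppose $T_x(\mathcal{A})$ has a non-primary irreducible module $W_\alpha$ with $\alpha\ne0$ that occurs in $\C^{|X|}$. Then $W_\alpha\otimes\mathfrak{A}(\mathcal{B})\hat y$ occurs in the standard module of the product. It is non-primary, and its dimension is $m_\alpha(d_{\mathcal{B}}+1)$. This is at least $2$ whenever $|Y|>1$, since then $d_{\mathcal{B}}\ge1$. So the product is not AC. The symmetric argument applies to $\mathcal{B}$, which gives the "only if" direction. For the converse, if both algebras consist only of the primary component, then the product algebra is a single matrix block. Its only irreducible module is the primary one, so there are no non-primary modules at all and the AC condition holds vacuously.

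The main obstacle is bookkeeping rather than depth. We must make sure the correspondence between Wedderburn components and modules appearing in the standard module is handled correctly, and in particular that tensoring with the primary module of the other factor produces a genuine non-primary module of dimension greater than $1$. This is where the hypothesis $|X|,|Y|>1$ is used; it is implicit because $T$ is non-commutative when $|\Omega|>1$. We also need the multiplicity-one property of the primary module to rule out a second copy of type $(0,0)$.
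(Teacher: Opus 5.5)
Your proposal is correct and follows the paper's proof. Both arguments use Theorem \ref{thm:direct} to write $T_{(x,y)}(\mathcal{A}\times\mathcal{B})$ as $T_x(\mathcal{A})\otimes_\C T_y(\mathcal{B})$ and decompose it into tensor products of Wedderburn components. Both then tensor a non-primary component of one factor with the primary component of the other, which gives a non-primary irreducible module of dimension at least $d_{\mathcal{B}}+1\geq 2$.

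Your version is somewhat more careful than the paper's in four places:
\begin{itemize}
\item you identify the primary module of the product explicitly as $\mathfrak{A}(\mathcal{A})\hat x\otimes\mathfrak{A}(\mathcal{B})\hat y$;
\item you invoke the multiplicity-one property of the primary module;
\item you separate module dimensions from component dimensions;
\item you state the implicit hypothesis $|X|,|Y|>1$, without which the statement fails.
\end{itemize}
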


\begin{proof}
    Let $V$ be the primary component of $T_x(\mathcal{A})$ and $U$ be the primary component of $T_y(\mathcal{B})$.

    If $T_x(\mathcal{A})=V$ and $T_y(\mathcal{B})=U$, then by Theorem \ref{thm:direct}, $T_{(x,y)}(\mathcal{A}\times \mathcal{B})\cong T_x(\mathcal{A})\otimes T_y(\mathcal{B})\cong V\otimes U$, which is the primary component. Then $T_{(x,y)}(\mathcal{A}\times \mathcal{B})$ is AC.

    Now suppose without loss of generality that $T(\mathcal{A})$ has a non-primary component $W$. Note $T_{(x,y)}(\mathcal{A}\times \mathcal{B})\cong T_x(\mathcal{A})\otimes_{\C} T_y(\mathcal{B})$ and the Wedderburn decomposition of a tensor product of two algebras over $\C$, is the tensor product over $\C$ of pairs of components from the two algebras, when using the direct product action. Then $W\otimes U$ is a component of $T_{(x,y)}(\mathcal{A}\times \mathcal{B})$ and is not the primary component. Also $\dim(W\otimes U)=\dim(W)\times \dim(U)\geq \dim(U)>1$. Thus, $T_{(x,y)}(\mathcal{A}\times \mathcal{B})$ has a non-primary component of dimension greater than $1$, so it is not AC.
\end{proof}

The following result of Tanaka can now be used to finish the direct product case.

\begin{Lem}[Lemma 8, \cite{Tanaka}]\label{lem:tan}
    Let $\mathcal{A}=(\Omega,\{A_i\})$ be a commutative association scheme. Let $W_0$ be the primary $T_x-$module. Then
        $\C^\Omega=W_0$ if and only if $\mathcal{A}$ is the group association scheme of a finite abelian group.
\end{Lem}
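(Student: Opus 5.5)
The plan is to reduce the equality $\C^\Omega=W_0$ to a statement about valencies, and then to recognise schemes in which every valency equals $1$ as exactly the group schemes of abelian groups. Recall from Section 2 that the primary module is $W_0=\mathfrak{A}\hat{x}$, that it is an irreducible $T_x$-module of dimension $d+1$, and that it is contained in $\C^\Omega$. Hence $\C^\Omega=W_0$ holds if and only if $|\Omega|=d+1$.

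\emph{Step 1: reduction to valencies.} Let $k_i$ be the number of $1$'s in a row of $A_i$, so $k_i=p_{ii'}^0$. Since $\sum_{i=0}^d A_i=J$, we get $\sum_{i=0}^d k_i=|\Omega|$. Since no $A_i$ is zero, each $k_i\geq 1$. Therefore $|\Omega|=d+1$ if and only if $k_i=1$ for every $i$, that is, every $A_i$ is a permutation matrix.

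\emph{Step 2: thin schemes are group schemes.} Assume every $A_i$ is a permutation matrix.
\begin{enumerate}
\item The set $\{A_0,\dots,A_d\}$ is a group. The product $A_iA_j$ is a permutation matrix, so it is a $0,1$ matrix. It equals $\sum_k p_{ij}^kA_k$, and the $A_k$ have disjoint supports. Hence $A_iA_j$ is a single $A_k$. Closure under multiplication, together with $A_0=I$ and finiteness, gives a group $G$ of permutations of $\Omega$.
\item $G$ acts regularly on $\Omega$. Because $\sum_i A_i=J$, for each ordered pair $(y,w)$ there is exactly one $i$ with $(A_i)_{yw}=1$.
\item Identify $\Omega$ with $G$ by $g\mapsto g(x)$, where $x$ is the base point. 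Under this identification $(A_i)_{g,h}=1$ exactly when $g^{-1}h$ is the element corresponding to $A_i$. So the relations of $\mathcal{A}$ are the relations of the Schur ring of $G$ whose principal sets are all singletons.
\item Commutativity of $\mathcal{A}$ means the $A_i$ commute, so $G$ is abelian. For abelian $G$ we have $\Inn(G)=1$, so $\S(G,\Inn(G))$ has singleton orbits. Hence $\mathcal{A}=\mathcal{G}(G)$.
\end{enumerate}
I expect the only point needing care to be sub-step 3, the identification of $\Omega$ with $G$ so that the relations become exactly ``$g^{-1}h$ equals a fixed element''. This follows from regularity: the $(g,h)$ entry of $A_i$ is $1$ precisely when $A_i$ carries $g(x)$ to $h(x)$ in the appropriate convention.

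\emph{Step 3: converse.} Suppose $\mathcal{A}=\mathcal{G}(G)$ with $G$ abelian. Then $\Inn(G)$ is trivial, so every principal set is a singleton. Thus $d+1=|G|=|\Omega|$, and by the equivalence noted at the start, $W_0=\C^\Omega$.
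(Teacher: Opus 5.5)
The paper gives no proof of this lemma; it is quoted from Tanaka, so there is no in-paper argument to compare against. Your proof is correct. Comparing $\dim W_0=d+1$ with $|\Omega|=\sum_i k_i$, where every $k_i\geq 1$, forces the scheme to be thin. You then prove directly that a commutative thin scheme is the group scheme of an abelian group, instead of citing that standard fact.

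Two small points to tidy.
\begin{enumerate}
\item To call $A_i$ a permutation matrix you also need every column sum to be $1$. This follows because $A_i^t=A_{i'}$ and $k_{i'}=1$.
\item In sub-step 3, write $\sigma_i$ for the permutation of $A_i$ and identify $g\in G$ with $g(x)$. Then the condition that $A_i$ carries $g(x)$ to $h(x)$ gives $\sigma_i g=h$, that is, $hg^{-1}=\sigma_i$, not $g^{-1}h=\sigma_i$. This is harmless because sub-step 4 shows $G$ is abelian. It is cleaner to prove commutativity first and then make the identification.
\end{enumerate}
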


\begin{Thm}\label{cor:direct}
    For Schur rings $\S$ and $\T$ over groups $G$ and $H$ respectively we have that $T(G\times H, \S\times \T)$ is AC if and only if both $T(G,\S)$ and $T(H,\T)$ are the group algebra.
\end{Thm}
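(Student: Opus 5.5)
The plan is to chain Theorem \ref{thm:direct}, Proposition \ref{prop:dirnotac} and Lemma \ref{lem:tan}. The association scheme of $\S\times\T$ over $G\times H$ is the direct product of the schemes of $\S$ and $\T$. This holds because the principal sets of $\S\times\T$ are the products $P_i\times Q_j$. So $x^{-1}y\in P_i\times Q_j$ splits coordinatewise, and the adjacency matrices are the Kronecker products $A_i\otimes B_j$. With base point $e=(e_G,e_H)$, Theorem \ref{thm:direct} then gives $T(G\times H,\S\times\T)\cong T(G,\S)\otimes_\C T(H,\T)$. By Proposition \ref{prop:dirnotac}, this algebra is AC if and only if each of $T(G,\S)$ and $T(H,\T)$ consists only of its primary component.

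It remains to show that a Terwilliger algebra $T(G,\S)$ equals its primary component exactly when $\S$ is the group algebra $\C[G]$. Here I read ``$T(G,\S)$ is the group algebra'' in the statement as ``$\S$ is the group algebra''. There are two directions.

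Suppose first that $\S=\C[G]$. Every principal set is a singleton, so $d+1=|G|$. The primary module $\mathfrak{A}\hat e$ then has dimension $|G|$, so $\C^{G}=\mathfrak{A}\hat e$. Hence every irreducible $T$-module is isomorphic to the primary one, and $T$ is the primary component. Concretely, $T\cong M_{|G|}(\C)$, since $E_g^*A_hE_k^*$ already gives all matrix units.

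Conversely, suppose $T$ equals its primary component $V$. Then $T$ is simple, and $\C^{G}$ is a direct sum of copies of the primary module. I would argue that the primary module occurs with multiplicity one. The reason is that $E_0^*$ has rank one, since $E_0^*\C^G=\C\hat e$, and $E_0^*$ acts nontrivially on every module isomorphic to the primary module. Hence $\C^{G}=\mathfrak{A}\hat e$, and so $|G|=d+1$. This forces all principal sets to be singletons, that is, $\S=\C[G]$.

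In the commutative setting one could instead cite Lemma \ref{lem:tan} directly. The condition $\C^\Omega=W_0$ there means the scheme is the group scheme of an abelian group, whose principal sets are conjugacy classes, hence singletons. So the scheme is that of $\C[G]$. The multiplicity argument has the advantage of not needing commutativity.

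The main obstacle is this passage from ``only the primary component'' to ``$\C^G$ equals the primary module''. One must justify that $\C^G$ carries the primary module with multiplicity one. I would do this via the rank-one idempotent $E_0^*$, which is standard from \cite{Terwilliger1}, and fall back on Lemma \ref{lem:tan} when $\S$ and $\T$ are commutative, as they are in the cyclic case needed for Theorem \ref{thm:main}. The identification of the product scheme with $\mathcal{A}\times\mathcal{B}$ is routine.
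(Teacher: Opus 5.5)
Your proof is correct, and its first half matches the paper's: you identify the scheme of $\S\times\T$ with the direct product scheme, then use Theorem \ref{thm:direct} and Proposition \ref{prop:dirnotac} to reduce AC to both factors consisting only of their primary Wedderburn component. The difference is in the last step. The paper simply cites Tanaka's Lemma \ref{lem:tan}. You instead show directly that the primary module occurs with multiplicity one in $\C^G$, because $E_0^*\in T$ has rank one and is nonzero on every copy of $\mathfrak{A}\hat e$; hence $|G|=d+1$ and all principal sets are singletons. Your route buys two things. It makes explicit the passage from ``$T$ is its primary component'' to ``$\C^G=W_0$'', which is the actual hypothesis of Lemma \ref{lem:tan} and which the paper leaves implicit. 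It also avoids commutativity, whereas Lemma \ref{lem:tan} is stated only for commutative schemes even though the theorem is phrased for arbitrary $G,H$. The paper's citation is shorter and suffices for the cyclic groups of Theorem \ref{thm:main}. Two small remarks. First, Lemma \ref{lem:tan} cannot serve as a ``fallback'' for the multiplicity-one step, since its hypothesis is exactly $\C^\Omega=W_0$; once you have that step, you no longer need the lemma. Second, like the paper, you tacitly need $G$ and $H$ nontrivial. Proposition \ref{prop:dirnotac} uses $\dim U>1$, and the statement fails if, say, $H=\{e\}$ and $\S$ is the trivial Schur ring over $\mathcal{C}_3$.
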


\begin{proof}
We note that the direct product of Schur rings corresponds to the direct product of the corresponding association schemes. As such Proposition \ref{prop:dirnotac} gives that for groups $G$ and $H$ with Schur rings $\S$ and $\mathfrak{T}$ over them respectively, we have $T(G\times H,\S\times \mathfrak{T})$ is AC if and only if both $T(G,\S)$ and $T(H,\mathfrak{T})$ only have the primary component. Then Lemma \ref{lem:tan}, implies $T(G,\S)$ and $T(H,\T)$ are the group algebra.    
\end{proof}

We now define the wedge product of Schur rings. Let $1<K\leq H\le G$ such that $K\trianglelefteq G$. Let $\S$ be a Schur ring over $H$ with $K$ as an $\S$-subgroup. Suppose $\pi : G\to G/K$ is the natural quotient map. Then $\pi(\S)$ is a Schur ring over $H/K$. Let $\T$ be a Schur ring over $G/K$ with $H/K$ a $\mathfrak{T}-$subgroup such that $\T_{H/K} = \pi(\S)$. Then we define the \emph{wedge product} $\S\wedge_K \T$ by the partition \[\D(\S\wedge_K \T) = \D(\S) \cup \{\pi^{-1}(D)\mid D\in \D(\T)\setminus\D(\T_{H/K})\}.\] Under these conditions, $\S\wedge_K \T$ is a Schur ring over $G$ (see \cite{LeungI} for details). Alternatively, a Schur ring $\S$ over $G$ is a wedge product if and only if there exist nontrivial, proper $\S$-subgroups $H,K\le G$ such that $K\le H$, $K\trianglelefteq G$, and every $\S$-class outside of $H$ is a union of $K$-cosets. 

We first consider when $T(G,\S\wedge_K \mathfrak{T})$ is AC. For the next several results we assume $1<K\leq H\leq G$ with $K\normal G$ and that $\pi\colon G\rightarrow G/K$ is the natural projection map. We have:

\begin{Prop}\label{prop:wedgeac1}
    Suppose $\S$ is a Schur ring over $H$ with $\overline{K}\in \S$, $\mathfrak{T}$ is a Schur ring over $G/K$ with $\overline{H/K}\in \mathfrak{T}$, and $\pi(\S)=\mathfrak{T}_{H/K}$. If $T(G,\S\wedge_K \mathfrak{T})$ is AC, then $T(H,\S)$ and $T(G/K,\mathfrak{T})$ are both AC.
\end{Prop}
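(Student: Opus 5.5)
We will work throughout with the criterion of Theorem \ref{thm:sequiv}. This is legitimate because all Schur rings in sight are over cyclic groups (or at least are commutative in the setting of this section). Write $\mathfrak{W}=\S\wedge_K\T$. Recall its principal sets: those of $\S$, which are subsets of $H$, together with the sets $\pi^{-1}(D)$ for $D\in\D(\T)\setminus\D(\T_{H/K})$, which are unions of $K$-cosets lying outside $H$.

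\textbf{$T(H,\S)$ is AC.} Take $x,y\in H$ with $x\in P_i$, $y\in P_j$, $xy\in P_h$ (all $\S$-classes) and $P_i\neq P_j^*$. Every $\S$-class is also a $\mathfrak{W}$-class, and inversion of classes is the same in both rings. So the AC property of $T(G,\mathfrak{W})$ gives $P_iP_j=P_h$ directly, and Theorem \ref{thm:sequiv} applies. This is essentially the argument of Lemma \ref{lem:charac}, with $H$ being a $\mathfrak{W}$-subgroup.

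\textbf{$T(G/K,\T)$ is AC.} Take $\bar x\in D_i$, $\bar y\in D_j$, $\bar x\bar y\in D_h$ with $D_i\neq D_j^*$, where the $D$'s are $\T$-classes. We need $D_iD_j=D_h$. The plan is to lift to $\mathfrak{W}$-classes and use $\pi(C_1C_2)=\pi(C_1)\pi(C_2)$. A class $D$ outside $H/K$ lifts to the single $\mathfrak{W}$-class $\pi^{-1}(D)$. A class $D$ inside $H/K$ belongs to $\T_{H/K}=\pi(\S)$, so it is the image $\pi(C)$ of some $\S$-class $C$.

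\emph{Choosing lifts.} Choose lifts $C_i$, $C_j$ with $\pi(C_i)=D_i$, $\pi(C_j)=D_j$ and with $x\in C_i$, $y\in C_j$ for chosen preimages $x,y$. We must have $C_i\neq C_j^*$, and this holds automatically: $C_i=C_j^*$ would force $D_i=\pi(C_i)=\pi(C_j)^*=D_j^*$.

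\emph{Applying AC and projecting.} Now $xy$ lies in some $\mathfrak{W}$-class $C_h$. The AC hypothesis gives $C_iC_j=C_h$, hence $D_iD_j=\pi(C_h)$. Since $\pi(C_h)\ni\bar x\bar y$, and $\pi$ of any $\mathfrak{W}$-class is a single $\T$-class (images of $\S$-classes are $\pi(\S)$-classes, and the outer classes project to $D$), we get $\pi(C_h)=D_h$.

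The one point needing care is this image statement for classes inside $H$: $\pi$ of an $\S$-class is exactly one class of $\pi(\S)$. That is the standard description of the quotient Schur ring $\pi(\S)$ when $K$ is an $\S$-subgroup, since images of principal sets are either equal or disjoint (Leung–Man). I expect this to be the main, though mild, obstacle, so I would state it as a small lemma before running the lifting argument.
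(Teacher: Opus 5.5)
Your proposal is correct and is essentially the paper's proof. The argument for $T(H,\S)$ is identical, and for $T(G/K,\T)$ the paper uses the same lift, apply AC, project mechanism, split into four cases according to whether each of the two $\T$-classes lies in $\T_{H/K}$; your uniform choice of lifts, together with the fact that $\pi$ maps every class of $\S\wedge_K\T$ onto a single $\T$-class (which the paper uses implicitly), merges those four cases into one.
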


\begin{proof}
    As $T(G,\S\wedge_K \mathfrak{T})$ is AC, we have by Theorem \ref{thm:sequiv} for $C,D\in \mathcal{D}(\S \wedge_K \T)$ with $C\neq D^*$, that $CD=E$ for some class $E$. In particular, as $\mathcal{D}(\S)\subseteq \mathcal{D}(\S\wedge_K \T)$ we have for $C,D\in \mathcal{D}(\S)$ with $C\neq D^*$, that $CD=E$ for some class $E$ and $T(H,\S)$ is AC.

    Next we let $A,B\in \mathcal{D}(\T)$ such that $A\neq B^*$. We consider four cases.
    
    {\bf Case 1: }$A,B\not\in \mathcal{D}(\T_{H/K})$. Then $\pi^{-1}(A),\pi^{-1}(B)\in \mathcal{D}(\S\wedge_K \T)$. If $\pi^{-1}(A)=\pi^{-1}(B)^*$, then given $g\in \pi^{-1}(A)$ we have $g^{-1}\in \pi^{-1}(B)$. So $gK\in A$ and $g^{-1}K\in B$, which implies $A=B^*$. As this is false, $\pi^{-1}(A)\neq\pi^{-1}(B)^*$, and $\pi^{-1}(A)\pi^{-1}(B)=C$ for some class $C\in \mathcal{D}(\S\wedge_K \T)$. If $C\subseteq H$, then $C\in \mathcal{D}(\S)$, so $\pi(C)\in \mathcal{D}(\T_{H/K})$. Then $\pi(C)=\pi(\pi^{-1}(A)\pi^{-1}(B))=AB$. If $C\not\subseteq H$, then $C=\pi^{-1}(D)$ for some class $D\in \mathcal{D}(\T)\setminus \mathcal{D}(\T_{H/K})$. Thus, we have $\pi^{-1}(D)=\pi^{-1}(A)\pi^{-1}(B)$. So 
    \[D=\pi(\pi^{-1}(D))=\pi(\pi^{-1}(A)\pi^{-1}(B))=AB.\]
    Hence, we see $AB$ is a single class in this case.

    {\bf Case 2: }$A\in \mathcal{D}(\T_{H/K})$ and $B\not\in \mathcal{D}(\T_{H/K})$. Then $A=\pi(C)$ for some $C\in \mathcal{D}(\S)$ and $\pi^{-1}(B)\in \mathcal{D}(\S\wedge_K \T)$. Since $C^*\subseteq H$ as $C\subseteq H$ we have by Theorem \ref{thm:sequiv}, $C\pi^{-1}(B)=D$ for some class $D\in \mathcal{D}(\S\wedge_K \T)$. If $D\subseteq H$, then $\pi(D)\in \mathcal{D}(\T_{H/K})$ and $\pi(D)=\pi(C\pi^{-1}(B))=\pi(C)B=AB$. If $D\not\subseteq H$, then $D=\pi^{-1}(E)$ for some class $E\in \mathcal{D}(\T)\setminus \mathcal{D}(\T_{H/K})$. We have 
    \[E=\pi(\pi^{-1}(E))=\pi(D)=\pi(C\pi^{-1}(B))=\pi(C)B=AB.\]
    Hence, $AB$ is a single class in this case.

    {\bf Case 3: }$A\not\in \mathcal{D}(\T_{H/K})$ and $B\in \mathcal{D}(\T_{H/K})$. The proof is similar to Case 2.

    {\bf Case 4: }$A,B\in \mathcal{D}(\T_{H/K})$. Then $A=\pi(C)$ and $B=\pi(D)$ for $C,D\in \mathcal{D}(\S)$. Suppose $C=D^*$. Then for $g\in C$ we have $g^{-1}\in D$ and $gK\in A$, $g^{-1}K\in B$. This contradicts $A\neq B^*$. Hence, $C\neq D^*$ and by Theorem \ref{thm:sequiv}, $CD=E$ for some class $E\in \mathcal{D}(\S\wedge_K \T)$. Clearly $E\in \mathcal{D}(\S)$, so $\pi(E)\in \mathcal{D}(\T_{H/K})$. We see $AB=\pi(C)\pi(D)=\pi(CD)=\pi(E)$. Hence, $AB$ is a single class in this case.

    Thus, in all cases $AB=C$ for a single class $C\in \mathcal{D}(\T)$ and by Theorem \ref{thm:sequiv}, $T(G/K,\T)$ is AC.
\end{proof}

In the following example we show that the converse of Proposition \ref{prop:wedgeac1} is false.

\begin{Example}
    Let $G=\mathcal{C}_{12}=\langle z\rangle$, $H=\langle z^2\rangle$, and $K=\langle z^4\rangle$. We take $\S$ to be the group ring on $H$ and $\T$ to be the group ring on $G/K$, both of which result in an AC Terwilliger algebra. Then
    \[\mathcal{D}(\S)=\{\{e\}, \{z^2\}, \{z^4\}, \{z^6\}, \{z^8\}, \{z^{10}\}\},\]
    \[\mathcal{D}(\T)=\{\{K\}, \{zK\}, \{z^2K\}, \{z^3K\}\},\]
    \[\mathcal{D}(\S\wedge_K \T)=\{\{e\}, \{z^2\}, \{z^4\}, \{z^6\}, \{z^8\}, \{z^{10}\}, \{z,z^5,z^9\}, \{z^3,z^7,z^{11}\}\}.\]
    Taking $\{z,z^5,z^{9}\}\{z,z^5,z^9\}=\{z^2\}\cup \{z^6\}\cup \{z^{10}\}$ we see that $T(G,\S\wedge_K \T)$ is not AC.
\end{Example}

\begin{Prop}\label{prop:wedgeac2}
    Suppose $\S$ is a Schur ring over $H$ with $\overline{K}\in \S$, $\mathfrak{T}$ is a Schur ring over $G/K$ with $\overline{H/K}\in \mathfrak{T}$, and $\pi(\S)=\mathfrak{T}_{H/K}$. If $T(G,\S\wedge_K \mathfrak{T})$ is AC, then for all classes $C,D\in \mathcal{D}(\T)\setminus \mathcal{D}(\T_{H/K})$, with $C\neq D^*$ such that $CD\in \mathcal{D}(\T_{H/K})$, we have $\pi^{-1}(CD)\in \mathcal{D}(\S\wedge_K \T)$.
\end{Prop}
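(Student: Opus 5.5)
We argue directly from Theorem \ref{thm:sequiv}, in the same style as Case 1 of Proposition \ref{prop:wedgeac1}. Fix $C,D\in \mathcal{D}(\T)\setminus \mathcal{D}(\T_{H/K})$ with $C\neq D^*$ and $CD\in \mathcal{D}(\T_{H/K})$. Since neither $C$ nor $D$ lies in $\mathcal{D}(\T_{H/K})$, both $\pi^{-1}(C)$ and $\pi^{-1}(D)$ are principal sets of $\S\wedge_K\T$ lying outside $H$.

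First we show that $\pi^{-1}(C)\neq \pi^{-1}(D)^*$. If equality held, then for any $g\in\pi^{-1}(C)$ we would have $g^{-1}\in \pi^{-1}(D)$. Hence $gK\in C$ and $g^{-1}K\in D$, so $C=D^*$, which is a contradiction. Because $T(G,\S\wedge_K\T)$ is AC, Theorem \ref{thm:sequiv} now gives $\pi^{-1}(C)\pi^{-1}(D)=E$ for a single class $E\in\mathcal{D}(\S\wedge_K\T)$. This uses the set-product form of the criterion: pick $x\in\pi^{-1}(C)$ and $y\in\pi^{-1}(D)$, and let $E$ be the class of $xy$.

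Next we identify $E$ with $\pi^{-1}(CD)$. Both $\pi^{-1}(C)$ and $\pi^{-1}(D)$ are unions of $K$-cosets, and $K$ is normal. Therefore
\[\pi^{-1}(C)\pi^{-1}(D)=\pi^{-1}(CD),\]
because a product of unions of $K$-cosets is the full preimage of the product of their images. Combining this with the previous step gives $\pi^{-1}(CD)=E\in\mathcal{D}(\S\wedge_K\T)$, as required.

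The only point needing care is the equality $\pi^{-1}(C)\pi^{-1}(D)=\pi^{-1}(CD)$. The inclusion $\subseteq$ is immediate. For $\supseteq$, take $g$ with $gK=(aK)(bK)$, where $aK\in C$ and $bK\in D$. Then $g=abk$ for some $k\in K$, and $g=a\cdot(bk)$ with $bk\in\pi^{-1}(D)$. It is worth noting that the hypothesis $CD\in\mathcal{D}(\T_{H/K})$ is not actually used beyond the statement itself. Its role is to single out the interesting case, since $E\subseteq H$ then forces $E$ to be a union of $K$-cosets that is also a single class of $\S$.
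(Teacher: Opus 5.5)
Your proof is correct, and it is shorter than the paper's. Both arguments begin the same way. With $A=\pi^{-1}(C)$ and $B=\pi^{-1}(D)$ one checks $A\neq B^*$ and applies Theorem \ref{thm:sequiv} to get $AB=E$ for a single class $E$, which lies in $\mathcal{D}(\S)$ because $\pi(AB)=CD\subseteq H/K$.

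From there the paper never uses the identity $\pi^{-1}(C)\pi^{-1}(D)=\pi^{-1}(CD)$. Instead it shows that $E$ is the only $\S$-class over $CD$. It supposes a second class $F$ and invokes Proposition \ref{prop:wedgeac1} to make $EF^*$ and $CD$ single classes. It then deduces $VV^*=\{K\}$, so that $C$, $D$ and $CD$ are singletons. Only at that point does it compute $E=AB=hK=\pi^{-1}(CD)$ and reach a contradiction.

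Your identity, which you justify correctly from normality of $K$, gives $E=\pi^{-1}(CD)$ at once. This removes the contradiction argument and the need for Proposition \ref{prop:wedgeac1}. As you note, it also never uses the hypothesis $CD\in\mathcal{D}(\T_{H/K})$.

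The paper's detour buys nothing extra for this statement. Its final step is exactly your identity, specialised to the singleton case.
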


\begin{proof}
    As $C,D\in \mathcal{D}(\T)\setminus \mathcal{D}(\T_{H/K})$ we have $\pi^{-1}(C),\pi^{-1}(D)\in \mathcal{D}(\S\wedge_K \T)$, say $A=\pi^{-1}(C)$ and $B=\pi^{-1}(D)$. Notice that
    \[\pi(AB)=\pi(A)\pi(B)=CD\subseteq H/K,\]
    so $AB\subseteq H$. Suppose $A=B^*$. Then for $a\in A$ we have $a^{-1}\in B$. So $\pi(a)\in C$, $\pi(a^{-1})\in D$, and $C=D^*$ a contradiction. Thus, $A\neq B^*$. As $T(G,\S\wedge_K \T)$ is AC we have $AB=E$ for some $E\in \mathcal{D}(\S)\subseteq \mathcal{D}(\S\wedge_K \T)$.

    We know $\pi(\S)=\T_{H/K}$. If $E\in \mathcal{D}(\S)$ is the only class such that $\pi(E)=CD$, then $\pi^{-1}(CD)=E$ and we are done. So assume $F\in \mathcal{D}(\S)$ with $E\neq F$ such that $\pi(F)=CD$. By Proposition \ref{prop:wedgeac1}, $T(H,\S)$ and $T(G/K,\T)$ are both AC, so as $E\neq F$, $EF^*=U$ for some class $U\in \mathcal{D}(\S)$. Also $CD=V$ for some class $V\in \mathcal{D}(\T)$ since $C\neq D^*$. Observe that
    \[\pi(U)=\pi(EF^*)=\pi(E)\pi(F)^*=VV^*.\]
    We know $U\in \mathcal{D}(\S)$, so $\pi(U)$ is a class in $\T_{H/K}$. Clearly $\{K\}\in VV^*$ and we know $\{K\}$ is a class in $\T_{H/K}$. Hence, $\pi(U)=\{K\}$ and $VV^*=\{K\}$, the identity class in $\T$. This is only possible if $V$ is a singleton, say $V=\{hK\}$ with $h\in H$. Thus, $CD=\{hK\}$ and $C$ and $D$ must be singletons as well, say $C=\{aK\}$ and $D=\{bK\}$. Since $CD=V=\{hK\}$, we can write $D=\{a^{-1}hK\}$.

    Observe that $A=\pi^{-1}(C)=aK$ and $B=\pi^{-1}(D)=a^{-1}hK$. Then $E=AB=hK$. However, \[F\subseteq \pi^{-1}(\pi(F))=\pi^{-1}(CD)=\pi^{-1}(\{hK\})=hK=E.\]
    Thus $F\subseteq E$, which contradicts $E,F\in \mathcal{D}(\S)$ with $E\neq F$, so $\pi^{-1}(CD)=E\in \mathcal{D}(\S\wedge_K \T)$.
\end{proof}

Now we consider the case when $T(H,\S)$ and $T(G/K,\T)$ are both AC.

\begin{Prop}\label{prop:SandTAC}
    Suppose $\S$ is a Schur ring over $H$ with $\overline{K}\in \S$, $\mathfrak{T}$ is a Schur ring over $G/K$ with $\overline{H/K}\in \mathfrak{T}$, and $\pi(\S)=\mathfrak{T}_{H/K}$. If $T(H,\S)$ and $T(G/K,\T)$ are AC and for all $C,D\in \mathcal{D}(\T)\setminus \mathcal{D}(\T_{H/K})$ with $C\neq D^*$, such that $CD\in \mathcal{D}(\T_{H/K})$ we have $\pi^{-1}(CD)\in\mathcal{D}(\S\wedge_K \T)$, then $T(G,\S\wedge_K T)$ is AC.
\end{Prop}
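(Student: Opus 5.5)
We verify the criterion of Theorem \ref{thm:sequiv} for $\mathcal{U}=\S\wedge_K\T$. This Schur ring is commutative since $G$ is cyclic, so the theorem applies. Take classes $X,Y\in\D(\mathcal{U})$ with $X\neq Y^*$. We must show that $XY$ is a single class of $\mathcal{U}$. Every class is either in $\D(\S)$, so contained in $H$, or of the form $\pi^{-1}(D)$ with $D\in\D(\T)\setminus\D(\T_{H/K})$, so lying outside $H$ and a union of $K$-cosets. We split into three cases by where $X$ and $Y$ lie.

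\textbf{Case 1: $X,Y\in\D(\S)$.} Then $X\neq Y^*$ in $\S$ as well, and since $T(H,\S)$ is AC, Theorem \ref{thm:sequiv} gives $XY=E$ for a single $E\in\D(\S)\subseteq\D(\mathcal{U})$.

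\textbf{Case 2: $X\in\D(\S)$, $Y=\pi^{-1}(B)$ with $B\notin\D(\T_{H/K})$.} Here $XY=X\pi^{-1}(B)=\pi^{-1}(\pi(X)B)$, because $\pi^{-1}(B)$ is a union of $K$-cosets. Moreover $\pi(X)\in\D(\T_{H/K})$ and $\pi(X)\neq B^*$, since $B^*\not\subseteq H/K$. As $T(G/K,\T)$ is AC, $\pi(X)B=D$ for a single class $D\in\D(\T)$, and $D\not\subseteq H/K$ because $\pi(X)\subseteq H/K$ and $B\not\subseteq H/K$. Hence $XY=\pi^{-1}(D)\in\D(\mathcal{U})$. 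The symmetric case with the roles of $X$ and $Y$ swapped is identical, using commutativity.

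\textbf{Case 3: $X=\pi^{-1}(A)$, $Y=\pi^{-1}(B)$ with $A,B\notin\D(\T_{H/K})$.} First, $A\neq B^*$. Indeed, if $A=B^*$ then $\pi^{-1}(A)=\pi^{-1}(B)^*$, because preimages under $\pi$ commute with inversion, and this contradicts $X\neq Y^*$. Since $T(G/K,\T)$ is AC, $AB=D$ for a single $D\in\D(\T)$, and $XY=\pi^{-1}(A)\pi^{-1}(B)=\pi^{-1}(AB)=\pi^{-1}(D)$. Here $\pi^{-1}(A)\pi^{-1}(B)=\pi^{-1}(AB)$ because both sets are unions of $K$-cosets: the product contains $aKbK=abK$ for all $aK\in A$, $bK\in B$. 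If $D\notin\D(\T_{H/K})$, then $\pi^{-1}(D)\in\D(\mathcal{U})$ by definition. If $D\in\D(\T_{H/K})$, the hypothesis of the proposition states exactly that $\pi^{-1}(CD)=\pi^{-1}(D)$ is a class of $\mathcal{U}$.

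In every case $XY$ is a single class, so $T(G,\mathcal{U})$ is AC by Theorem \ref{thm:sequiv}. The only delicate point is Case 3 when the product falls inside $H/K$. There the AC property of $\T$ alone only says $\pi^{-1}(D)$ is a union of $\S$-classes, and the extra hypothesis is exactly what is needed to force it to be a single class. The remaining bookkeeping consists of the identities $\pi^{-1}(A)\pi^{-1}(B)=\pi^{-1}(AB)$ and $X\pi^{-1}(B)=\pi^{-1}(\pi(X)B)$, both of which hold because the classes outside $H$ are unions of $K$-cosets.
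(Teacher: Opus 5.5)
Your proof is correct and follows the paper's argument. It uses the same three-case split: both classes in $\D(\S)$; one in $\D(\S)$ and one a preimage $\pi^{-1}(B)$; both preimages. As in the paper, the extra hypothesis is invoked exactly when the product in $G/K$ lands in $\D(\T_{H/K})$. The only cosmetic difference is that you obtain $XY=\pi^{-1}(D)$ directly from the coset identities $X\pi^{-1}(B)=\pi^{-1}(\pi(X)B)$ and $\pi^{-1}(A)\pi^{-1}(B)=\pi^{-1}(AB)$. The paper instead proves only the containment $XY\subseteq\pi^{-1}(D)$ and concludes equality because $XY$ is a nonempty union of principal sets lying inside a single principal set.
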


\begin{proof}
    Let $A,B\in \mathcal{D}(\S\wedge_K T)$ such that $A\neq B^*$. We consider three cases.
    
    {\bf Case 1: }$A,B\in \mathcal{D}(\S)$. Then $AB=C$ for some $C\in \mathcal{D}(\S)\subseteq \mathcal{D}(\S\wedge_K \T)$ as $T(H,\S)$ is AC.

    {\bf Case 2: }$A\in \mathcal{D}(\S)$ and $B=\pi^{-1}(C)$ for some $C\in \mathcal{D}(\T)\setminus \mathcal{D}(\T_{H/K})$. As $\pi(\S)=\T_{H/K}$, there exists some $D\in \mathcal{D}(\T_{H/K})$ such that $\pi(A)=D$. Since $D\in \mathcal{D}(\T_{H/K})$, $D^*\in \mathcal{D}(\T_{H/K})$ and $D^*\neq C$. Then $DC=E$ for some class $E\in \mathcal{D}(\T)$ as $T(G/K,\T)$ is AC. As $C\not\in \mathcal{D}(\T_{H/K})$, there is some $gK\in C$ such that $gK\not\in H/K$. Notice if $E\in \mathcal{D}(\T_{H/K})$, then for all $hK\in D$ we have $hKgK\in H/K$, which implies $gK\in H/K$. This is false, so $E\in \mathcal{D}(\T)\setminus \mathcal{D}(\T_{H/K})$. Thus, $\pi^{-1}(E)\in \mathcal{D}(\S\wedge_K \T)$. As $A\subseteq \pi^{-1}(\pi(A))=\pi^{-1}(D)$, we have $AB\subseteq \pi^{-1}(D)\pi^{-1}(C)\subseteq \pi^{-1}(DC)=\pi^{-1}(E)$. Thus, $AB\subseteq \pi^{-1}(E)$. However, $AB$ is a union of principal classes of $\S\wedge_K \T$. As it is contained inside the principal class $\pi^{-1}(E)$, we must have $AB=\pi^{-1}(E)$. 

    {\bf Case 3: }$A=\pi^{-1}(C)$ and $B=\pi^{-1}(D)$ for $C,D\in \mathcal{D}(\T)\setminus \mathcal{D}(\T_{H/K})$. If $C=D^*$, then for $gK\in C$ we have $g^{-1}K\in D$. Then $g\in A$ and $g^{-1}\in B$, so $A=B^*$. This is false, so $C\neq D^*$. Thus, $CD=E$ for some class $E\in \mathcal{D}(\T)\setminus \{K\}$, since $T(G/K,\T)$ is AC and $K\not\in CD$. If $E\in \mathcal{D}(\T)\setminus \mathcal{D}(\T_{H/K})$ then $\pi^{-1}(E)\in \mathcal{D}(\S\wedge_K \T)$. If $E\in \mathcal{D}(\T_{H/K})$ then by hypothesis $\pi^{-1}(E)\in \mathcal{D}(\S\wedge_K \T)$. So either way $\pi^{-1}(E)$ is a principal class of $\S\wedge_K \T$. Observe $AB=\pi^{-1}(C)\pi^{-1}(D)\subseteq \pi^{-1}(CD)=\pi^{-1}(E)$.
    As $AB$ is a union of principal classes of $\S\wedge_K \T$ and is contained inside the principal class $\pi^{-1}(E)$, we must have $AB=\pi^{-1}(E)$. 

    Thus, for all $A,B\in \mathcal{D}(\S\wedge_K \T)$ with $A\neq B^*$ we have $AB=C$ for some $C\in \mathcal{D}(\S\wedge_K \T)$. So $T(G,\S\wedge_K \T)$ is AC by Theorem \ref{thm:sequiv}.
\end{proof}

% We note there are cases where $E\in \mathcal{D}(\T_{H/K})\setminus \{K\}$ such that $\pi^{-1}(E)\not\in \mathcal{D}(\S\wedge_K \T)$ and yet $T(G,\S\wedge_K \T)$ is still AC, as seen by the following example. This explains the need for the added condition that $E=CD$ for $C,D\in \mathcal{D}(\T)\setminus \mathcal{D}(\T_{H/K})$.

% \begin{Example}\label{ex:SandTAC}
%     Let $G=\mathcal{C}_{27}=\langle z\rangle$, $H=\mathcal{C}_9$, and $K=\mathcal{C}_3$. We let $\S$ be the full group ring over $H$ and $\T$ be the Schur ring over $G/K$ generated by the map $z\mapsto z^{4}$. The classes of $\T$ are
%     \[P_0=\{K\},\ P_1=\{zK,z^4K,z^7K\},\ P_2=\{z^2K,z^5K, z^8K\},\ P_3=\{z^3K\},\ P_4=\{z^6K\}.\]
%     From Theorem \ref{thm:main}, we know $\S$ and $\T$ are AC. Furthermore, direct computation shows that $\S\wedge_K \T$ is AC. However, $\pi^{-1}(P_3),\pi^{-1}(P_4)\not\in \mathcal{D}(\S\wedge_K \T)$. 
% \end{Example}

We now give an equivalent condition to $T(G,\S\wedge_K \T)$ being AC. Using it, one can determine when a wedge product of two Schur rings results in an AC Terwilliger algebra.

\begin{Thm}\label{thm:wedgeac}
    Suppose $\S$ is a Schur ring over $H$ with $\overline{K}\in \S$, $\mathfrak{T}$ is a Schur ring over $G/K$ with $\overline{H/K}\in \mathfrak{T}$, and $\pi(\S)=\mathfrak{T}_{H/K}$. Then $T(G,\S\wedge_K \T)$ is AC if and only if 
    \begin{enumerate}
        \item $T(H,\S)$ and $T(G/K,\T)$ are both AC;
        \item For all classes $C,D\in \mathcal{D}(\T)\setminus \mathcal{D}(\T_{H/K})$, with $C\neq D^*$ such that $CD\in \mathcal{D}(\T_{H/K})$, we have $\pi^{-1}(CD)\in \mathcal{D}(\S\wedge_K \T)$. 
    \end{enumerate}
\end{Thm}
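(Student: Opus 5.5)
This theorem is an assembly of the three propositions immediately preceding it, so the plan is to prove the two directions separately by citing them, with no new computation. Throughout, the standing hypotheses are the ones shared by Propositions \ref{prop:wedgeac1}, \ref{prop:wedgeac2} and \ref{prop:SandTAC}. These are: $1<K\leq H\leq G$ with $K\normal G$; $\S$ a Schur ring over $H$ with $\overline{K}\in\S$; $\T$ a Schur ring over $G/K$ with $\overline{H/K}\in\T$; and $\pi(\S)=\T_{H/K}$. These are exactly the hypotheses of the theorem, so each proposition applies verbatim.

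For the forward direction, assume $T(G,\S\wedge_K\T)$ is AC. Proposition \ref{prop:wedgeac1} then gives condition (1): $T(H,\S)$ and $T(G/K,\T)$ are both AC. Proposition \ref{prop:wedgeac2}, under the same hypothesis, gives condition (2). This is the statement that for $C,D\in\mathcal{D}(\T)\setminus\mathcal{D}(\T_{H/K})$ with $C\neq D^*$ and $CD\in\mathcal{D}(\T_{H/K})$, the preimage $\pi^{-1}(CD)$ is a principal set of $\S\wedge_K\T$.

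For the reverse direction, assume (1) and (2). These are precisely the hypotheses of Proposition \ref{prop:SandTAC}, which concludes that $T(G,\S\wedge_K\T)$ is AC.

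The only point needing care is checking that the hypotheses line up exactly, in particular that condition (2) is quantified in the same way as in Propositions \ref{prop:wedgeac2} and \ref{prop:SandTAC}. It is, so no real obstacle is expected. The substantive work, the singleton argument in Proposition \ref{prop:wedgeac2} and the containment argument $AB\subseteq\pi^{-1}(E)$ in Proposition \ref{prop:SandTAC}, has already been carried out. Both of those rest on the criterion of Theorem \ref{thm:sequiv}, which applies because Schur rings over the groups considered here (cyclic, hence abelian) are commutative.
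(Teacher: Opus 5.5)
Your proposal is correct and is exactly the paper's proof: the forward direction is Proposition \ref{prop:wedgeac1} together with Proposition \ref{prop:wedgeac2}, and the converse is Proposition \ref{prop:SandTAC}. Your side remark is also apt, since Theorem \ref{thm:sequiv} is stated only for commutative Schur rings, a hypothesis the paper leaves implicit here but which holds in the cyclic setting.
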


\begin{proof}
    If $T(G,\S\wedge_K \T)$ is AC then by Proposition \ref{prop:wedgeac1}, $T(H,\S)$ and $T(G/K,\T)$ are AC. By Proposition \ref{prop:wedgeac2}, we have for all classes $C,D\in \mathcal{D}(\T)\setminus \mathcal{D}(\T_{H/K})$, with $C\neq D^*$ such that $CD\in \mathcal{D}(\T_{H/K})$, we have $\pi^{-1}(CD)\in \mathcal{D}(\S\wedge_K \T)$.

    Conversely if $T(H,\S)$ and $T(G/K,\T)$ are AC with the property that for all classes $C,D\in \mathcal{D}(\T)\setminus \mathcal{D}(\T_{H/K})$, with $C\neq D^*$ such that $CD\in \mathcal{D}(\T_{H/K})$, we have $\pi^{-1}(CD)\in \mathcal{D}(\S\wedge_K \T)$, then by Proposition \ref{prop:SandTAC}, $T(G,\S\wedge_K \T)$ is AC.
\end{proof}

Theorem \ref{thm:main} now follows from Theorems \ref{thm:2case}, \ref{thm:primepower}, \ref{cor:allcase}, \ref{cor:direct}, \ref{thm:wedgeac}.

\printbibliography

\end{document}